\documentclass[11pt,letterpaper]{amsart}
\addtolength{\textheight}{9mm}
\usepackage[margin=2.5cm]{geometry}
%\addtolength{\textheight}{6mm}

\usepackage[french,english]{babel}

\usepackage[T1]{fontenc}
\usepackage{lmodern, amsfonts,amsmath,amstext,amsbsy,amssymb,
amsopn,amsthm,upref,eucal,mathptmx,mathtools,url,thmtools}

\usepackage{dutchcal}
\usepackage{mathrsfs}
\usepackage{tcolorbox}
\usepackage[hidelinks]{hyperref}
\usepackage{float}

\usepackage{amsthm,cleveref}

\RequirePackage{xcolor} % [dvipsnames]
\definecolor{halfgray}
{gray}{0.55}%chapter numbers will be semi
%transparent .5 .55 .6 .0
\definecolor{webgreen}
{rgb}{0,0.4,0}
\definecolor{webbrown}
{rgb}{.8,0.1,0.1}
\definecolor{red}
{rgb}{1,0,0}
\usepackage{microtype}
\usepackage{tikz}
\usepackage{soul} %to strike, use \st

%mathcal
\newcommand \cD {{\mathcal D}}
\newcommand \cF {{\mathcal F}}
\newcommand \cH {{\mathcal H}}
\newcommand \cI {{\mathcal I}}
\newcommand \cL {{\mathcal L}}
\newcommand \cR {{\mathcal R}}

\newcommand \R {{ \mathbb R}}
\def\C{{\mathbb C}}

\newcommand \Z {{ \mathbb Z}}
\newcommand \N {{ \mathbb N}}
\newcommand \T {{ \mathbb T}}
\newcommand \tS {{\widetilde S}}
\newcommand \tM {{\widetilde M}}

\newcommand \cB {{\mathscr B}}

\newcommand \bT {{\mathbb T}}
\newcommand \cC {{\mathscr{C}}}
\newcommand \cO {{\mathcal{O}}}

\newcommand \sC {{C_\sharp}}

\newcommand{\horo}{\mathsf{h}}
\newcommand{\geo}{\mathsf{g}}

\newcommand*{\diff}{\mathop{}\!\mathrm{d}}
\newcommand{\one}{{\rm 1\mskip-4mu l}}

\newcommand{\xb}{\mathbf{x}}
\newcommand{\yb}{\mathbf{y}}

\newcommand{\eb}{\mathbf{e}}
\newcommand{\eps}{\epsilon}

\newcommand \re {{%
\operatorname{Re}
}}

\newcommand{\Aut}{%
\operatorname{Aut}
}

\DeclareMathOperator{\vol}{vol}

\DeclareMathOperator{\topp}{top}
\DeclareMathOperator{\Deck}{Deck}
\DeclareMathOperator{\abel}{ab}
\DeclareMathOperator{\Lip}{Lip}

\DeclareMathOperator{\supp}{supp}

\newtheorem{theorem}{Theorem}[section]
\newtheorem {lemma}[theorem]{Lemma}
\newtheorem {proposition}[theorem]{Proposition}
\newtheorem{corollary}[theorem]{Corollary}

\newtheorem{bigthm}{Theorem}

\theoremstyle{definition}
\newtheorem{remark}[theorem]{Remark}

%%%%%%marginnote and comments

\date{\today}

\author{Roberto Castorrini}

\address{
Scuola Normale Superiore, Piazza dei Cavalieri 7, Pisa, Italy, 56126
}

\email{roberto.castorrini@gmail.com}

\author{Davide Ravotti}

\address{
Universit\'e de Lille, CNRS, UMR 8524 - Laboratoire Paul Painlev\'e, F-59000 Lille, France
}

\email{davide.ravotti@gmail.com}

 \title[Horocycle flows on abelian covers]
 {Horocycle flows on abelian covers of surfaces of negative curvature \\[0.5em]
 Flot horocyclique sur les rev\^etements ab\'eliens de surfaces \`a courbure n\'egative}

\begin{document}

%\begin{abstract}
%	We consider the unit speed parametrization of the horocycle flow on infinite Abelian covers of compact surfaces of negative curvature. We prove an asymptotic result for the ergodic integrals of sufficiently regular functions. In the case of constant curvature, where the unit speed and the uniformly contracting parametrizations of horocycles coincide, we recover a result by Ledrappier and Sarig.
%	Our method, which does not use symbolic dynamics, is based on a general Fourier decomposition for Abelian covers and on the study of spectral theory of weighted (and twisted) transfer operators for the geodesic flow acting on appropriate anisotropic Banach spaces.
%	Finally, as a byproduct result, we obtain a power deviation estimate for the horocycle ergodic averages on compact surfaces, without requiring any pinching condition as in previous results.
%\end{abstract}

\maketitle

\begin{abstract}
	We consider the unit speed parametrization of the horocycle flow on infinite Abelian covers of compact surfaces of negative curvature. We prove an asymptotic result for the ergodic integrals of sufficiently regular functions. 
	Our approach, which does not rely on symbolic dynamics, is based on a general Fourier decomposition for Abelian covers. This enables us to extend the transfer operator techniques to the case of geodesic flow in an infinite volume setting.
		In the case of constant curvature, where the unit speed and the uniformly contracting parametrizations of horocycles coincide, we recover a result by Ledrappier and Sarig.
	Finally, as a byproduct result, we obtain a power deviation estimate for the horocycle ergodic averages on compact surfaces, without requiring any pinching condition as in previous results.
\end{abstract}

\begin{otherlanguage}{french}
\begin{abstract}
Nous considérons la paramétrisation à vitesse 1 du flot horocyclique sur les revêtements abéliens infinis des surfaces compactes à courbure négative. Nous prouvons un résultat asymptotique pour les intégrales ergodiques de fonctions suffisamment régulières. Notre approche, qui ne repose pas sur la dynamique symbolique, est basée sur une décomposition de Fourier générale pour les revêtements abéliens. Cela nous permet d'étendre les techniques d'opérateurs de transfert au cas du flot géodésique dans un cadre de volume infini. Dans le cas de courbure constante, où les paramétrisations à vitesse 1 et de la contraction uniforme des horocycles coïncident, nous retrouvons un résultat de Ledrappier et Sarig. Enfin, nous déduisons une estimation de la déviation de la puissance pour les moyennes ergodiques des horocycles sur les surfaces compactes, sans condition de pincement comme dans les résultats précédents.
\end{abstract}
\end{otherlanguage}

\setcounter{tocdepth}{1}
\tableofcontents

\section{Introduction}

One of the central problems in ergodic theory is understanding the behaviour of typical orbits of measure-preserving flows and, in particular, the asymptotics of their ergodic integrals. For ergodic flows preserving a finite measure, the Birkhoff ergodic theorem implies that the growth of ergodic integrals is linear (proportional to the space average of the observable) and the same for almost every point. On the other hand, the behaviour of ergodic and conservative flows preserving an infinite, $\sigma$-finite measure is rather different: a celebrated theorem of Aaronson \cite{Aar} states that one cannot find an asymptotic rate that applies to almost every orbit and, as such, the growth of ergodic integrals depends on the starting point. As a consequence of Hopf's ratio ergodic theorem, the dependence on the chosen observable is only as a multiplicative constant, namely its space average.

One can therefore hope to describe the ergodic integrals of an ergodic, conservative, measure-preserving flow $(\varphi_t)_{t\in \R}$ on a $\sigma$-finite measure space $(M,\mu)$, with $\mu(M)=\infty$, as
\begin{equation}\label{eq:expan}
\int_0^T f\circ \varphi_t(x) \diff t=a(T)\Phi_T(x) \mu(f)(1+o(1))
\end{equation}
where the factor $a(T)$ describes the \lq\lq correct\rq\rq\ asymptotic growth, and $\Phi_T(x)$ represents an \lq\lq oscillating\rq\rq\ term, which does not depend on the integrable function $f$; moreover, although $\Phi_T$ does not converge pointwise almost everywhere, it perhaps does in some weaker sense.
This indeed has been achieved for some families of {\it parabolic} flows, see, for instance, \cite{AvDoDu, BFRT, LeSa, LeSa2}.

In this paper, we derive an expansion of the form \eqref{eq:expan} (see Theorem \ref{thm:main_result}) for the {\it unit speed} parametrization of the horocycle flow on infinite Abelian covers of compact surfaces of negative, possibly variable curvature. 
We explicitly describe the rate $a(T)$ and the oscillating factor $\Phi_T(x)$ in \eqref{eq:a(T,x)}, along with an estimate for the error term. Next, we establish equidistribution for geodesic translates of horocycle segments, which is our second result (see Theorem \ref{thm:main_result_2}). Finally, as a bonus, we obtain a power deviation estimate for the horocycle ergodic averages on negatively curved compact surfaces (see Theorem C), without requiring any pinching condition as in \cite[Corollary 3.4]{AdBa}.

In the case of constant curvature $-1$, the unit speed parametrization of horocycles coincides with the so-called {\it uniformly contracting} parametrization. We then recover the result by Ledrappier and Sarig in \cite{LeSa}, which applies in general for the uniformly contracting parametrization. 
Differently from \cite{LeSa} and other previous results (e.g., \cite{AvDoDu}), our approach does not use symbolic dynamics; instead, it is geometric in nature and it relies purely on functional analysis and transfer operators methods. The use of transfer operators acting on anisotropic Banach spaces to study parabolic systems appeared first in the seminal work of Giulietti and Liverani \cite{GiLi}, and has proved successful in several settings \cite{FaGoLa, AdBa, BuSi}. It stems from the idea of {\it renormalization}, which, in our case, is provided by the geodesic flow. The lack of uniformity, in general, for the unit speed parametrization of horocycle flows (as opposed to the uniformly contracting one, as suggested by the name) makes the analysis harder and requires the use of weighted transfer operators, analogously to~\cite{AdBa}.

There are two further difficulties that one needs to overcome to apply the transfer operator method in our setting.
Firstly, the (weighted) transfer operator associated with the geodesic flow on $\mathbb{Z}^d$-covers of a compact space acts on smooth functions defined on a non-compact space. Although it may still be possible to establish a Lasota-Yorke type inequality, which is crucial to obtain quasi-compactness of the operator, the non-compactness of the space precludes the direct application of Hennion's theorem \cite{Hen}. 
To this end, we exploit a Fourier-type decomposition of the space of square integrable functions given by the natural $\Z^d$-action of the Galois group of the cover. 
Roughly speaking, this reduces the problem to studying the action of the transfer operator on a family of spaces of functions which \lq\lq behave as if they were defined on a compact space\rq\rq\ (more precisely, they can be seen as sections of line bundles over the compact base manifolds, as in \cite{FlRa}).
In turn, we will study the action of a family of weighted and twisted transfer operators on a fixed Banach space containing densely the space of smooth functions on the compact base manifold. We then conduct the necessary spectral analysis of these operators associated to the geodesic flow which, to the best of the authors' knowledge, is new in the context of an infinite volume setting. 

The second issue is that the flow is not uniformly hyperbolic due to the presence of the flow direction, rendering it only partially hyperbolic. Because of this neutral direction, proving the existence of a spectral gap is extremely difficult (if possible at all). Only in special cases the existence of a spectral gap for transfer operators of partially hyperbolic systems has been proved (see for instance \cite{CaLi} in discrete time or \cite{Tsu} in continuous time).
Nonetheless, drawing from the works of \cite{Liv} and \cite{But}, quasi-compactness of the resolvent of the generator suffices to obtain a spectral decomposition for the transfer operators.
Combined with the idea of renormalization, it will allow us to prove the expansion in \eqref{eq:expan}.

\subsection{Organization of the paper}
The paper is organized as follows. \Cref{sec:result} contains the main results, Theorem \ref{thm:main_result}, Theorem \ref{thm:main_result_2} and Theorem \ref{thm:main_result_3}, preceded by an introduction to the dynamics on surfaces of negative curvature and to covering spaces with an Abelian Galois group. 
\Cref{sec:renorm} contains some needed results on the dynamics of geodesic and of horocycle flows on their unit tangent bundles. \Cref{sec:twisted_hilbert_spaces} deals with covering spaces with an Abelian Galois group: we prove a Fourier-type decomposition which reduces the problem from studying functions on the cover to vectors in a family of (mutually isomorphic) Banach spaces parametrized by the dual of the Galois group. In \Cref{sec:twisted_transfer_operators} and \Cref{sec:spectral_picture}, we study the spectral theory of (weighted) transfer operators for the geodesic flow, twisted by harmonic 1-forms representing cohomology classes that vanish on the cover. In particular, in \S\ref{sec:twisted_transfer_operators} we establish a Lasota-Yorke inequality for the resolvent and in \S\ref{sec:spectral_picture} we deduce a spectral decomposition for the semigroup of transfer operators. Finally, \Cref{sec:horocycle_integrals} contains the proof of our main results, combining the decomposition from \S\ref{sec:abelian_covers}, the renormalization provided by the geodesic flow, and the results on the transfer operators from \S\ref{sec:spectral_picture}.

%%%%%%%%
\section{Setting and results}\label{sec:result}

%%%%%%%%
\subsection{Geodesic and horocycle flows on surfaces of negative curvature}\label{sec:geo_horo_basics}
%\subsection{Definitions and basic properties}
Let $S$ be a compact, connected surface equipped with a complete smooth Riemannian metric of negative curvature. 
We let $M = T^1S$ denote the unit tangent bundle of $S$. For any $x = (z,v) \in M$, there exists a unique unit speed geodesic $\gamma_x(t)$, defined for all $t \in \R$, such that $\gamma_x(0) = z$ and $\dot{\gamma_x}(0)=v$.
The geodesic flow $(\geo_t)_{t \in \R}$ is the smooth flow on $M$ defined by 
\[
\geo_t(x) = (\gamma_x(t), \dot{\gamma_x}(t)).
\]
The vector field $X$ on $M$ generating the geodesic flow is given by 
\[
Xf(x) = \frac{\diff}{\diff t} \Big\vert_{t=0} f \circ \geo_t(x),
\]
for all $f\in \mathscr{C}^1(M)$. 
The geodesic  flow is an Anosov flow, in particular the following holds. 
Let $D\geo_t \colon TM \to TM$ denote the differential of $\geo_t$. There exist constants $C \geq 1$ and $\lambda>0$, and there exists a $D\geo_t$-invariant splitting of $TM = E_{-} \oplus E_0 \oplus E_{+}$ into 1-dimensional subbundles, where  $E_0 = \langle X \rangle$ and $E_{\pm}$ satisfy
\begin{equation}\label{eq:hyperb}
\| D\geo_t U\| \leq C e^{-\lambda t} \|U\|,\quad \forall U \in E_{-} \qquad \text{and} \qquad \| D\geo_{-t} V\| \leq C e^{-\lambda t} \|V\|, \quad \forall V \in E_{+},
\end{equation}
for all $t\geq 0$. Explicit bounds on $\lambda$ can be expressed in terms of the geometry of $S$; we refer the reader to \cite[Part IV, \S 17.6]{HasKat} for a detailed discussion.
In the case $S$ has constant negative curvature $-1$, then, for all $x \in M$, we have
\[
D\geo_t(x) U_x = e^{-t} U_{\geo_t(x)}, \text{\ for all $U \in E_{-}(x)$} \qquad \text{and} \qquad D\geo_t(x) V_x = e^{t} V_{\geo_t(x)}, \text{\ for all $V \in E_{+}(x)$},
\]
as we will see in \Cref{lemma:Jacobi} below.

A further important remark is that the flow $(\geo_t)_{t \in \R}$ preserves a \emph{contact} form and hence the associated Liouville measure, which, up to a scalar multiple, coincides with the natural Riemann volume on $M$. We will denote this measure by $\vol$. The geodesic flow is thus an example of a \emph{contact Anosov flow}.

We denote by $h_{\topp}$ the topological entropy of the time-one map $\geo_1$.

The distributions $E_{\pm}$ are orientable and are of class $\mathscr{C}^{2-\varepsilon}$ for every $\varepsilon >0$; more precisely they are $\mathscr{C}^{1}$ and their derivative is in the Zygmund class, as proved by Hurder and Katok \cite{HuKa}.
On the other hand, they fail to be $\mathscr{C}^{2}$, unless the surface $S$ has constant curvature.

Both $E_{-}$ and $E_{+}$ integrate to 1-dimensional $\geo_t$-invariant foliations $W^{-}$ and $W^{+}$, whose leaves are called the \emph{stable} and \emph{unstable manifolds} of $(\geo_t)_{t \in \R}$. 
The unit speed motions along the stable and unstable manifolds define two flows $(\horo^{-}_t)_{t \in \R}$ and $(\horo^{+}_t)_{t \in \R}$ on $M$, called the \emph{stable} and \emph{unstable horocycle flow}. 

In this paper, we are going to focus on the stable horocycle flow, which henceforth we will simply call the horocycle flow and will be denoted by $(\horo_t)_{t \in \R}$. By definition, the generating vector field $U$, given by
\[
Uf(x) = \frac{\diff}{\diff t} \Big\vert_{t=0} f \circ \horo_t(x)
\]
for all $f\in \mathscr{C}^1(M)$, spans the line bundle $E_{-}$. 

The horocycle flow is an important example of a \emph{parabolic flow}. It is minimal \cite{Bow, Pla}, has zero entropy \cite{Gur} and is mixing with respect to the unique invariant probability measure $\mu$ \cite{Marcus2}. We remark that, unless the curvature is constant, the invariant measure $\mu$ is singular with respect to the Liouville measure on $M$. %Not sure where to find this, saw in Pollicott's lecture notes
%%%%%%

\subsection{Abelian covers}\label{sec:abelian_covers}
Let $G = \pi_1(S)$ be the fundamental group of the surface $S$ and let $G' = [G,G]$ denote the derived subgroup. 
The quotient $G^{\abel} = G/G'$ is isomorphic to the first homology $H_1(S,\Z)$ of $S$, which is a free abelian group of rank $2g$, where $g \geq 2$ is the genus of $S$.

Intermediate subgroups $G' \leq \Gamma \leq G$ are in 1-to-1 correspondence with subgroups of $H_1(S,\Z)$ via the projection $\Gamma \mapsto \Gamma^{\abel, G} = \Gamma/G'$. Each $\Gamma$ defines a cover $p_0 \colon \tS \to S$ with a Galois group
\[
\Deck := \Aut(\tS / S)
\]
of deck transformations isomorphic to $G/\Gamma = H_1(S,\Z) /  \Gamma^{\abel, G}$. We will assume that the latter has no torsion, which is always the case up to a finite cover, and hence is a free abelian group of rank $1\leq d \leq 2g$.
We fix $d$ linearly independent primitive homology classes
\[
[\gamma_i] =  \gamma_i \, G' \in H_1(S,\Z), \qquad \text{for $i=1,\dots , d$}
\]
so that the elements
\[
D_i =  \gamma_i \, \Gamma \in \Deck
\]
form a basis of $\Deck \simeq \Z^d$.
We say that the associated cover $p_0 \colon \tS \to S$ is a $\Z^d$-cover of $S$.
The cover $\tS$ is equipped with the pullback Riemannian metric under $p_0$, so that $p_0 \colon \tS \to S$ is a Riemannian cover, and the deck transformations act isometrically.
The action of $\Deck$ extends naturally to an action on the unit tangent bundle $\tM = T^1\tS$ of $\tS$ and we have an associated cover $p \colon \tM \to M$.

We equip the cover $\tS$ with the pullback Riemannian area form under $p_0$. By a little abuse of notation, we still denote by $\vol$ the Liouville form on $\tM$. It induces an infinite measure, normalized so that $\vol(M)=1$.

\subsection{The results} Given a positive definite symmetric $d \times d$ matrix $\Sigma$, we define the $\|\cdot\|_{\Sigma}$ norm on $\R^d$ as
\[
\|\xb\|_{\Sigma} := \sqrt{\xb \cdot \Sigma^{-1} \xb}.
\]
Our first main result is the following, and its proof can be found in \Cref{sec:proof_main_result}.

\begin{bigthm}\label{thm:main_result}
	Let $p \colon \tM \to M$ a $\Z^d$-cover of $M = T^1S$, where $S$ is a negatively curved compact surface. There exist constants $C_M \geq 1$ and $\varepsilon >0$, a positive definite symmetric $d \times d$ matrix $\Sigma$, and functions
	\[
	\begin{split}
&t_{\ast}  \colon M \times [C_M,\infty) \to \R_{>0}, \qquad \text{satisfying} \qquad \|T-e^{h_{\topp} \, t_{\ast}(\cdot,T) }\|_{\infty} \leq C_M T^{1-\varepsilon}%C_M^{-1} \leq \frac{\|T_{\ast} (\cdot, T)\|_{\infty}}{T} \leq C_M,
 \qquad \text{and} \\ 
&F_{\ast}\colon M \times [0,\infty) \to \R^d \qquad \text{satisfying} \qquad \frac{F_{\ast} (\cdot, T)}{\sqrt{T}} \to \mathcal{N}(0,\Sigma) \text{\ \ in distribution,}
	\end{split}
	\]
for which the following holds.
For every $f \in \mathscr{C}_c^2(\tM)$ and for every $x \in \tM$, there exists a constant $C(f,x)\geq 0$ depending on the $\mathscr{C}^2$-norm of $f$, on the diameter of its support $\supp(f)$, and on the distance between $x$ and $\supp(f)$, such that for all $T\geq C_M$, denoting $t_{\ast} = t_{\ast} (p(x),T)$, and
\begin{equation}\label{eq:a(T,x)}
a(T):=\frac{ h_{\topp}^{d/2}}{(2\pi )^{d/2}\sqrt{\det \Sigma}} 
 	\frac{T}{(\log T)^{d/2}}, \qquad \Phi_T(x):=\exp \left(-\frac{1}{2} \left\| \frac{F_{\ast}(p(x), t_{\ast} )}{\sqrt{t_{\ast}  }} \right\|_{\Sigma}^2 \right),
\end{equation}
 we have
\[
 	\Bigg\lvert \int_{0}^{T} f \circ \horo_s(x) \diff s -    
 	a(T) \Phi_T(x) \int_{\tM} f \diff \mu  \Bigg\rvert \le C_M C(f,x)  \, \frac{T \cdot \log \log T}{(\log T)^{\frac{d+1}{2}}}.
\]
\end{bigthm}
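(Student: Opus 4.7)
The plan is to combine the three main tools developed in the paper: the Fourier-type decomposition indexed by $\chi\in\Galhat$ from Section~\ref{sec:twisted_hilbert_spaces}, the spectral theory of the twisted weighted transfer operators $\LL^t_\chi$ for $\geo_t$ from Sections~\ref{sec:twisted_transfer_operators}--\ref{sec:spectral_picture}, and the renormalization of horocycle arcs by $\geo_t$ from Section~\ref{sec:renorm}. First, I decompose
\[
\int_0^T f \circ \horo_s(x) \, \diff s \;=\; \int_{\Galhat} \left(\int_0^T f_\chi \circ \horo_s(x) \, \diff s \right)\diff \chi,
\]
where the $\chi$-component $f_\chi$ identifies with a section of a line bundle over the compact base $M$, whose anisotropic norm is controlled by $C(f,x)$. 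The task is then to estimate each inner integral uniformly in $\chi$.

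For fixed $\chi$, I use the quasi-commutation between $\geo_t$ and $\horo_s$ (in variable curvature, via the weight entering $\LL^t_\chi$) to renormalize. Choosing $t_\ast = t_\ast(p(x),T)$ so that $\geo_{t_\ast}$ stretches a unit horocycle arc to approximate length $T$ is what produces the bound $\|T-e^{h_\topp\, t_\ast}\|_\infty\le C_M T^{1-\varepsilon}$ in the theorem statement. A change of variables then yields a representation of the form
\[
\int_0^T f_\chi \circ \horo_s(x) \, \diff s \;\approx\; \chi\bigl(F_\ast(p(x),t_\ast)\bigr)^{-1}\int_0^1 \bigl(\LL^{t_\ast}_\chi \Psi_{f_\chi,s}\bigr)\bigl(\geo_{-t_\ast}p(x)\bigr)\,\diff s,
\]
where $\Psi_{f_\chi,s}$ is the stable-leaf integrand built from $f_\chi$ and $F_\ast$ is the deck-displacement cocycle of $\geo_t$ in the $\Z^d$-cover, contributing the phase that links the character to the ``oscillating'' factor. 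Applying the spectral decomposition from Section~\ref{sec:spectral_picture}, one writes
\[
\LL^t_\chi \;=\; e^{\lambda(\chi)t}\,\Pi_\chi + R_\chi^t,
\]
with $\lambda(0)=h_\topp$, $\Pi_\chi$ a rank-one projector depending analytically on $\chi$ near $0$, $R_\chi^t$ bounded by $e^{(h_\topp-\eta)t}$ uniformly in small $\chi$, and a uniform sub-$h_\topp$ bound for $\lambda(\chi)$ away from $0$. Standard analytic perturbation produces
\[
\lambda(\chi) \;=\; h_\topp - \tfrac12\langle\chi,\Sigma\chi\rangle + O(|\chi|^3),
\]
with $\Sigma$ identified with the asymptotic covariance of $F_\ast$, so that the two $\Sigma$'s in the theorem coincide.

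Substituting into the integral over $\Galhat$ and localizing to $|\chi|\le\delta:=\sqrt{(c\log t_\ast)/t_\ast}$ for suitable $c>0$, the leading Gaussian integral is evaluated by Laplace's method:
\[
\int_{\Galhat} e^{\lambda(\chi)t_\ast}\,\chi(F_\ast)^{-1}\,\diff\chi \;\approx\; \frac{(2\pi)^{d/2}\,e^{h_\topp t_\ast}}{t_\ast^{d/2}\sqrt{\det\Sigma}}\,\exp\!\left(-\tfrac12\Big\|\tfrac{F_\ast(p(x),t_\ast)}{\sqrt{t_\ast}}\Big\|_\Sigma^2\right).
\]
Combining with $e^{h_\topp t_\ast}\sim T$, $t_\ast\sim(\log T)/h_\topp$, and identifying $\Pi_0$ (applied to the $s$-averaged integrand) with $\mu(f)$, reproduces the leading expression $a(T)\,\Phi_T(x)\,\mu(f)$.

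The main obstacle is the spectral perturbation in $\chi$: the full semigroup has no spectral gap (only the resolvent is quasi-compact, as emphasized in the introduction), so one must carry out the perturbation at the level of the resolvent on the anisotropic Banach space and reassemble, with uniform-in-$\chi$ control on $\Pi_\chi$, $R_\chi^t$, and the Taylor remainder of $\lambda$. The $\log\log T$ in the final error is the outcome of optimizing $\delta$: the cubic Taylor remainder contributes a relative error $\sim\delta^3 t_\ast$, the complementary region $|\chi|>\delta$ decays like $e^{-c\delta^2 t_\ast}$, and balancing forces $\delta^2\sim(\log t_\ast)/t_\ast$, hence a relative error $\sim(\log t_\ast)/\sqrt{t_\ast}$; multiplied by the leading order $T/(\log T)^{d/2}$ and using $t_\ast\sim\log T$, this is exactly the stated $T\log\log T/(\log T)^{(d+1)/2}$.
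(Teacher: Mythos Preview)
Your overall strategy---Fourier-decompose over $\T^d\simeq\Galhat$, renormalize by the geodesic flow, apply the spectral decomposition of the twisted transfer operators, and evaluate the resulting oscillatory Gaussian integral---is exactly the paper's. But there is a real gap in the renormalization step. You renormalize directly by $t_*$, reducing to a single unit horocycle arc. The raw functional $g\mapsto\int_0^1 g\circ\horo_s(y)\,\diff s$ is not bounded on $\cB_w$ (the norm $\|\cdot\|_{0,1+\alpha}$ requires $\cC^{1+\alpha}_c$ test functions; see Lemma~\ref{lem:integral_is_cont_funct}), so one must introduce a smooth cutoff $\psi$ supported in $(B,1-B)$. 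This costs a smoothing error $\sim e^{h_{\topp}t_*}B\sim TB$ (Lemma~\ref{lem:smoothened_erg_int}), while the resulting functional has norm $\|\psi\|_{\cC^{1+\alpha}}\sim B^{-2}$, which then multiplies every subsequent spectral estimate---in particular the Taylor error in the Gaussian integral, giving $\sim TB^{-2}t_*^{-(d+1)/2}$. No choice of $B$ balances these two errors to the stated bound.

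The paper's fix is a two-scale argument: renormalize first by an \emph{intermediate} time $t$ with $e^{h_{\topp}t}=T/(\log T)^d$, smooth with $B=1$ (error $T/(\log T)^d$, harmless), and partition the resulting arc of length $\sim(\log T)^d$ into unit subarcs via a partition of unity, so that all test functions have uniformly bounded $\cC^{1+\alpha}$ norm. The normalizing time $t_*$ enters only afterwards, when one replaces the phases $\xi_\omega(x_{t,j})$ at the subarc basepoints by $\omega\cdot F_*(p(x),t_*)$; the discrepancy $|t_*-t|=O(\log t)=O(\log\log T)$ makes this phase correction cost $\sim\|\omega\|\log t$. It is \emph{this}---not any optimization of the cutoff $\delta$ in the $\chi$-integral---that produces the $\log\log T$ factor in the final bound. (The Gaussian-integral Taylor error by itself contributes only $T/(\log T)^{(d+1)/2}$, with no extra logarithm.)
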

Before proceeding, let us provide some remarks on the result.

\begin{remark}[Winding cycle and normalizing time] The functions $t_{\ast}$ and $F_{\ast}$ are explicitly defined and have precise geometric meaning. The function $t_{\ast}$ is a \emph{normalizing time}, in the sense that it is the time the geodesic flow takes to normalize the horocycle orbit of length $T$ at $x$ to unit size (see \eqref{eq:choice_t_ast} below for the precise definition). In particular, $C_M=1$ and  $t_{\ast}(x,T) = \log T$ for surfaces of constant negative curvature $-1$. 

The vector $F_{\ast}(x,T)$ is the so-called \emph{Frobenius function}, or the \emph{geodesic winding cycle}: its components are ergodic integrals of harmonic 1-forms along the geodesic orbit at $x$ of length $T$ and they describe the behaviour of this geodesic with respect to the cohomology classes that vanish on $\tM$ (roughly speaking, how the geodesic segment \lq\lq winds around\rq\rq\ the cycles in the first homology of $M$ which do not lift to closed loops on the cover); see \eqref{eq:F} and \Cref{sec:proof_main_result} for the precise definition. The covariance matrix $\Sigma$ is associated to the quadratic form defined in \eqref{eq:sigma}, normalized by $-1/(4\pi^2)$.
\end{remark}

\begin{remark}[Asymptotic expansion]\label{rk:asymptotic_expansion}
Following the proof outlined in Section \ref{sec:proof_main_result}, we observe that the result follows by computing the leading term in the expansion of the integral in \eqref{eq:approx_4}. While it is possible, in principle, to derive a complete expansion using stationary phase methods, we refrain from doing so due to the complexity of the computations, particularly for large $d$; we refer to \cite{BFRT} for the analogous computations in the setting of translation flows. %Moreover, such a complete expansion would not significantly enhance the current result (forse si). 
\end{remark}

%\begin{remark}[Generic points]
%	The conclusion of \Cref{thm:main_result} holds for any point $x\in \tM$; however, the term $a(T)\Phi_T(x) \mu(f)$ describes the asymptotics of the horocycle integrals only for those points $x$ for which $\Phi_T(x)$ is not \lq\lq too small\rq\rq, namely for $x\in \tM$ such that $F_{\ast}(x,t)/t \to 0$ as $t \to \infty$ (whereas, for the other points, we only obtain an upper bound). Indeed, we do not expect the points for which such condition does not hold to be generic for $\mu$: in the case of hyperbolic surfaces, there is a $\R^d$ family of ergodic invariant Radon measures \cite{BaLe, Sar}, and Sarig and Schapira showed in \cite{SaSc} that the set of generic points for these measures are parametrized by the possible limits of $F_{\ast}(x,t)/t$. We expect a similar characterization to hold in our general setting of variable curvature.
%\end{remark}

\begin{remark}[Generic points]
	The conclusion of \Cref{thm:main_result} holds for any point $x\in \tM$; however, the term $a(T)\Phi_T(x) \mu(f)$ describes the asymptotics of the horocycle integrals only for those points $x$ for which $\Phi_T(x)$ is not \lq\lq too small\rq\rq, namely for $x\in \tM$ such that $F_{\ast}(p(x),t)/t \to 0$ as $t \to \infty$ (whereas, for the other points, we only obtain an upper bound). Indeed, we expect that the points for which the above convergence holds are generic for $\mu$. In the case of hyperbolic surfaces, there is an $\R^d$ family of ergodic invariant Radon measures \cite{BaLe, Sar}, and Sarig and Schapira showed in \cite{SaSc} that the sets of generic points for these measures are parametrized by the possible limits of $F_{\ast}(p(x),t)/t$. We expect a similar characterization to hold in our general setting of variable curvature.
\end{remark}

\begin{remark}[Curvature and dimension]
In the scenario of constant negative curvature, we recover the result presented in \cite{LeSa}. Notably, Theorem \ref{thm:main_result} extends to compact surfaces of variable negative curvature. This extension is facilitated by the one-dimensionality of the stable and unstable manifolds. While this simplification streamlines many computations, it is primarily crucial for proving a Dolgopyat-type inequality (see Proposition \ref{prop:dolgopyat_inequality}) without assuming any bunching condition\footnote{Looking at \cite[Remark 7.6]{GiLiPo} it becomes clear why the bunching condition was not needed in our setting.}. Hence, another suitable choice of Banach spaces, coupled with a bunching condition, would likely suffice to generalize our result to any dimension. This could be achieved using the arguments presented in \cite{AdBa} or \cite{GiLiPo} to establish the required Dolgopyat inequality.
\end{remark}

\begin{remark}[Limit theorems]
Our proof of the main theorem relies on a pure functional analytic approach, wherein we investigate the spectrum of the weighted transfer operator $\mathcal{L}_t$ associated with the geodesic flow on $T^1S$. Establishing the quasi-compactness of $\mathcal{L}_t$ would grant access to various statistical properties, such as decay of correlations. Particularly, employing a spectral method to study complex perturbations of the operator $\mathcal{L}_{t,\nu}$ (see \cite{Gou}) would imply limit theorems such as the Central Limit Theorem (CLT), local CLT, and large deviations, among others. However, the absence of uniform hyperbolicity has posed significant challenges in proving quasi-compactness for $\mathcal{L}_t$.

Nonetheless, the pioneering work of Liverani \cite{Liv} indicates that quasi-compactness of the resolvent of the generator of the semigroup $\mathcal{L}_t$ is enough to get, at least, exponential decay of correlations. Building on this insight, our analysis on the resolvent of the generator of the perturbed operator $\mathcal{L}_{t,\nu}$ could yield an alternative approach to establish the CLT for the geodesic flow on negatively curved manifolds and exploring its finer statistical properties (See \Cref{prop:pert2} and \Cref{rmk:CLT}). 
\end{remark}

The same methods we develop in this paper yield an equidistribution result for geodesic translates of horocycle segments, which is our second main result. Its proof is contained in \Cref{sec:proof_main_result_2}.

\begin{bigthm}\label{thm:main_result_2}
	Let $p \colon \tM \to M$ be a $\Z^d$-cover of $M = T^1S$, where $S$ is negatively curved compact surface. There exists a a positive definite symmetric $d \times d$ matrix $\Sigma$
for which the following holds.
Let  $\eta$ be a 1-form of class $\mathscr{C}^2$ on $\tM$ with compact support, fix $x \in \tM$ and $\sigma >0$. Define 
\[
\gamma_{x,\sigma}(s) = \horo_s(x), \qquad \text{for} \qquad s \in [0,\sigma],
\]
the horocycle segment starting at $x$ of length $\sigma$. There exist $C(\sigma)$ depending on $\sigma$, $\varpi(\sigma,x)$ depending on $x$ and on $\sigma$, and $C(\eta,x)\geq 0$ depending on the $\mathscr{C}^2$-norm of $\eta$, on the diameter of its support $\supp(\eta)$, and on the distance between $x$ and $\supp(\eta)$, such that for all $t \geq C(\sigma)$ we have
\[
\left\lvert \frac{(2\pi  t)^{\frac{d}{2}}\sqrt{\det \Sigma}}{e^{h_{\topp} t} \, \varpi(\sigma,x)} \int_{\geo_{-t} \circ \gamma_{x, \sigma}} \eta - \int_{\tM} \langle \eta, U\rangle \diff \mu \right\rvert \leq C(\sigma) C(\eta,x) \frac{\log t}{ \sqrt{t}}.
\]
\end{bigthm}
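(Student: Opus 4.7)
The plan is to adapt the spectral framework underlying the proof of Theorem \ref{thm:main_result} to a \emph{short} horocycle integral of the transfer operator applied to $f := \langle \eta, U\rangle$. First, by the change of variables for 1-form integrals and the fact that $D\geo_{-t}$ acts on $E_-$ as multiplication by a scalar Jacobian $e^{\alpha(\cdot,t)}$, one has
\[
\int_{\geo_{-t}\circ\gamma_{x,\sigma}}\eta = \int_{\gamma_{x,\sigma}}(\geo_{-t})^*\eta = \int_0^\sigma (\LL_t f)(\horo_s(x))\,\diff s,
\]
where $(\LL_t g)(y) := e^{\alpha(y,t)} g(\geo_{-t}(y))$ is precisely the weighted transfer operator analyzed in \Cref{sec:twisted_transfer_operators} and \Cref{sec:spectral_picture}. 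This identity reduces the theorem to an asymptotic analysis, as $t\to\infty$, of the short-arc horocycle integral on the right.

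Next I would follow the three-step strategy from the proof of Theorem \ref{thm:main_result}. First, apply the Fourier decomposition of \S\ref{sec:twisted_hilbert_spaces} to split $\LL_t = \int_{\Galhat} \LL_{t,\chi}\,\diff\chi$. Second, invoke the spectral results of \S\ref{sec:spectral_picture} to write, for $\chi$ near the trivial character,
\[
\LL_{t,\chi} f_\chi = \lambda(\chi,t)\,c_\chi(f_\chi)\,h_\chi + R_\chi(t,f),
\]
with leading eigenvalue $\lambda(\chi,t) = e^{h_{\topp} t - 2\pi^2\langle \Sigma \chi,\chi\rangle t + O(\|\chi\|^3 t)}$, a uniform spectral gap away from $\chi = 0$, and eigenfunctions/functionals $(h_\chi,c_\chi)$ smoothly parametrized by $\chi$ and normalized so that $c_0(g)\int_0^\sigma h_0(\horo_s(x))\,\diff s = \sigma\int_{\tM} g\,\diff\mu$ for every $\mathscr{C}^2_c$ function $g$. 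Third, substitute into the integral and perform the Gaussian integration over $\Galhat$. The crucial difference from Theorem \ref{thm:main_result} is that the horocycle arc now has bounded length $\sigma$, so the map $\chi\mapsto \int_0^\sigma h_\chi(\horo_s(x))\,\diff s$ is smooth in $\chi$ near $0$ (the short arc does not accumulate any $\chi$-phase from the $\Deck$-equivariance of $h_\chi$), rather than oscillatory as in the long-arc case; this is exactly what replaces the Frobenius factor $\Phi_T(x)$ of Theorem \ref{thm:main_result} by the bare constant $1$.

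Putting the pieces together, the main term becomes
\[
\sigma\, e^{h_{\topp} t} \int_{\Galhat} e^{-2\pi^2 \langle \Sigma \chi,\chi\rangle t}\,\diff \chi \cdot \int_{\tM}\langle\eta,U\rangle\,\diff\mu = \frac{\sigma\, e^{h_{\topp} t}}{(2\pi t)^{d/2}\sqrt{\det\Sigma}}\int_{\tM}\langle\eta, U\rangle\,\diff\mu,
\]
which is the asymptotic claimed. The $O(\log t/\sqrt{t})$ error assembles from the cubic correction in $\lambda(\chi,t)$ (size $O(1/t)$), the Taylor remainder of $\chi\mapsto c_\chi(f_\chi)\int_0^\sigma h_\chi\circ\horo_s(x)\,\diff s$ near $\chi=0$ (contributing $O(1/t)$ after Gaussian integration, as the linear term vanishes by symmetry), the exponentially small spectral-gap remainder, and a logarithmic truncation of the $\chi$-integration to a ball of radius $\sim\sqrt{\log t/t}$. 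The main obstacle will be making the spectral asymptotics of $\LL_{t,\chi}$ sufficiently uniform in $\chi$ in the \emph{anisotropic} Banach norm of \Cref{sec:twisted_transfer_operators}, so that the leading eigenfunctions $h_\chi$---distributional objects rather than classical functions---can be tested against the compactly supported arc $s\mapsto \horo_s(x)$ with constants depending only on the $\mathscr{C}^2$-norm of $\eta$, the diameter of $\supp(\eta)$, and the distance from $x$ to $\supp(\eta)$.
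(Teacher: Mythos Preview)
Your overall architecture is right: rewrite the form integral as $\int_0^\sigma (\cL_t f)\circ\horo_s(x)\,\diff s$ with $f=\langle\eta,U\rangle$, Fourier-decompose, apply the spectral picture of \S\ref{sec:spectral_picture}, and do the Gaussian integral in $\omega$. You have also correctly identified \emph{why} the Frobenius factor $\Phi_T$ is absent: the test arc does not pick up a macroscopic $\omega$-phase.

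There is, however, a genuine gap in the plan as written. To invoke the spectral decomposition you must pair $\hat\cL_t^{(\omega)}u_\omega$ with a functional of the form $\mathcal I[p(x),\Psi]$ from \Cref{lem:integral_is_cont_funct}, i.e.\ with a $\mathscr C^{1+\alpha}$ weight supported inside the arc. Replacing the sharp cutoff $\one_{[0,\sigma]}$ by a smooth $\psi$ costs a boundary term, and on the \emph{short} arc this boundary term is not small after normalisation: for $\epsilon<1$,
\[
e^{-h_{\topp}t}\Bigl|\int_0^{\epsilon}\cL_t f\circ\horo_s(x)\,\diff s\Bigr|
\;\le\; e^{-h_{\topp}t}\|f\|_\infty\,\tau(\epsilon,-t,x)
\;\le\; e^{-h_{\topp}t}\|f\|_\infty\,\epsilon\,\sup J_{-t}
\;\le\;\|f\|_\infty\,\epsilon\,e^{(\sqrt{-\underline k}-h_{\topp})t},
\]
using \Cref{lemma:Jacobi}. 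In variable curvature $\sqrt{-\underline k}>h_{\topp}$ in general, so this grows exponentially; shrinking $\epsilon$ forces $\|\psi\|_{\mathscr C^{1+\alpha}}$ to grow at a comparable exponential rate, which blows up the spectral remainder. Lemma~\ref{lemma:Marcus} does not rescue you either, since it only controls $\tau(s,-t,x)$ for $s\ge 1$. In short: on an arc of \emph{bounded} length the smoothing error is of the same order as the main term.

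The paper resolves this by inserting an intermediate renormalisation step you have omitted: write $\cL_t=\cL_r\circ(\cL_{t-r}\,\text{at}\,\geo_{-r})$ with $r\asymp\log t$, so that the arc is first expanded to length $\tau(\sigma,-r,x)\asymp\sigma e^{h_{\topp}r}$ (polynomial in $t$). On this \emph{long} arc one smooths with $B=1$ at each end; the boundary loss is then $O(e^{-h_{\topp}r})=O(t^{-d})$ after normalisation, because Lemma~\ref{lemma:Marcus} now applies to the unit-length endpoints of the $\cL_{t-r}$-integral. The price is that the basepoints $x_j$ of the partition sit on $\geo_{-r}\circ\gamma_{x,\sigma}$ and hence at homological distance $O(r+\sigma)=O(\log t)$ from $x$, which produces the phase bound $|\xi_\omega(x_j)|\le \sC\|\omega\|\log t$ and ultimately the $\log t/\sqrt t$ error. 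So the logarithm in the error is not from ``truncation of the $\chi$-integration'' as you suggest, but from this very expansion step that makes the smoothing controllable.
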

The term $\varpi(\sigma,x)$ is the Margulis length of the arc $\gamma_{x,\sigma}$, see \Cref{prop:Margulis}.
An important difference with \Cref{thm:main_result} is the absence of the oscillating factor in \Cref{thm:main_result_2} above. As it will be clear in \Cref{sec:horocycle_integrals}, the presence of the term $\Phi_T$ involving the Frobenius function in \Cref{thm:main_result} is due to the varying position of the renormalized orbit segment in the cover. On the contrary, for expanding horocycles, the renormalized segment does not move in space, and hence the asymptotics do not oscillate. A somehow similar phenomenon happens in the case of non-compact, finite volume hyperbolic surfaces, see \cite{FlFo} and references therein. \\

Finally, as an outgrowth of our methods, we are also able to prove a power deviation estimate for the horocycle ergodic averages on negatively curved compact surfaces. The case of hyperbolic surfaces is due to Burger \cite{Bur}, and has been later refined by several authors \cite{FlFo, BuFo, Str, Rav}. The case of variable curvature has been studied by Adam and Baladi \cite{AdBa} under some additional assumptions on the curvature of the surface. 
Unlike \cite[Corollary 4.9]{AdBa}, we do not require any pinching condition on the curvature, making our result completely general. The proof of the following theorem is given in Section \ref{sec:proofC}, which provides also more precise information on $a\in (0,1)$ given in \eqref{eq:deviation} in terms of the topological entropy $h_{\topp}$.

\begin{bigthm}\label{thm:main_result_3}
Let $M = T^1S$ be the unit tangent bundle of a compact surface $S$ of negative curvature, let $(\horo_s)_{s \in \R}$ be the unit speed horocycle flow on $M$, and let $\mu$ be the unique invariant probability measure. There exist $a \in (0,1)$ and $\sC>0$ such that, for each $f\in \cC^2(M)$, $x\in M$ and $T \ge 1$, we have
\begin{equation}\label{eq:deviation}
\left\lvert \frac 1T \int_0^T f\circ \horo_s(x)\diff s - \mu(f) \right\rvert \leq \dfrac{\sC}{T^{a}} \|f\|_{\cC^2}.
\end{equation}
\end{bigthm}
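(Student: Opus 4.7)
The plan is to specialize the machinery developed for $\Z^d$-covers to the trivial case where the cover is $M$ itself. In this degenerate setting, the Fourier decomposition of \Cref{sec:twisted_hilbert_spaces} and the twisted operators of \Cref{sec:twisted_transfer_operators} reduce to the untwisted weighted transfer operator $\cL_t$ associated to the geodesic flow, acting on a single anisotropic Banach space of functions on $M$. I would then follow the renormalization scheme used for \Cref{thm:main_result_2}, but now starting from an ergodic average for the horocycle flow rather than from an expanding horocycle.

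First, I would use the commutation relation between the horocycle and geodesic flows recalled in \Cref{sec:renorm} to convert the ergodic integral into a geodesic push-forward of a bounded horocycle arc. Given $T\geq 1$ and $x\in M$, I would choose a normalizing time $t_\ast=t_\ast(x,T)$ for which $T\asymp e^{h_{\topp}t_\ast}$, so that $t_\ast=h_{\topp}^{-1}\log T+O(1)$, and such that the horocycle segment $\{\horo_s(x)\}_{s\in[0,T]}$ is exactly the image under $\geo_{t_\ast}$ of a horocycle segment of bounded length $\sigma_0$ starting at $y:=\geo_{-t_\ast}(x)$. After the induced change of variables the ergodic average becomes
\[
\int_0^T f\circ\horo_s(x)\,\diff s
=\int_0^{\sigma_0} f\circ\geo_{t_\ast}\circ\horo_u(y)\,J_{t_\ast}(u)\,\diff u,
\]
where $J_{t_\ast}$ is the Jacobian of $\geo_{t_\ast}$ along the stable foliation $W^-$, growing on average like $e^{h_{\topp}t_\ast}$.

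Next, I would express the right-hand side as a pairing $\langle\cL_{t_\ast}\rho_y,\,f\rangle$, where $\rho_y$ is a $\cC^2$ density concentrated near $y$ with transverse $W^-$-structure, lying in the anisotropic Banach space introduced in \Cref{sec:twisted_transfer_operators}. By the spectral decomposition of \Cref{sec:spectral_picture} (applied to the untwisted operator), $\cL_t$ has a simple leading eigenvalue $e^{h_{\topp}t}$ whose spectral projector $\Pi$ recovers the measure $\mu$, while the remainder of the spectrum satisfies a bound of the form $\|\cL_t|_{\ker\Pi}\|\leq C\,e^{\lambda_1 t}$ for some $\lambda_1<h_{\topp}$. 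Applying this decomposition to $\rho_y$ yields
\[
\langle\cL_{t_\ast}\rho_y,f\rangle
= e^{h_{\topp}t_\ast}\mu(f)\,\Pi(\rho_y)
+ O\!\bigl(e^{\lambda_1 t_\ast}\|f\|_{\cC^2}\bigr),
\]
and dividing by $T\asymp e^{h_{\topp}t_\ast}$ produces the desired bound \eqref{eq:deviation} with
\[
a=\frac{h_{\topp}-\lambda_1}{h_{\topp}}\in(0,1).
\]

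The main obstacle is the last step: obtaining a uniform gap $\lambda_1<h_{\topp}$ without any bunching or pinching hypothesis on the curvature, which is precisely where previous variable-curvature results such as \cite[Corollary 3.4]{AdBa} required extra assumptions. The partial hyperbolicity of the geodesic flow blocks a direct spectral gap for the semigroup $(\cL_t)_{t\geq0}$; following the strategy of \cite{Liv, But} alluded to in the introduction, I would instead derive quasi-compactness of the \emph{resolvent} of the generator, which is sufficient for the spectral decomposition above. The crucial technical input is the Dolgopyat-type estimate of \Cref{prop:dolgopyat_inequality}, whose proof exploits the one-dimensionality of the stable and unstable foliations on a surface and therefore dispenses with any bunching condition; this is what makes \Cref{thm:main_result_3} completely general.
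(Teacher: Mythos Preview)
Your overall strategy---renormalize by the geodesic flow and apply the spectral decomposition of \Cref{prop:pert2} at $\omega=0$---is exactly the paper's. Two points, however, need correction.

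First, the direction of renormalization is reversed. The stable horocycle \emph{contracts} under the forward geodesic flow, so the paper pushes the length-$T$ arc at $x$ \emph{forward} to $x_t=\geo_t(x)$, obtaining a short arc of length $\tau(T,t,x)\asymp Te^{-h_{\topp}t}$; your claim that the length-$T$ arc is the $\geo_{t_\ast}$-image of a \emph{bounded} arc at $y=\geo_{-t_\ast}(x)$ contradicts this (the $\geo_{t_\ast}$-image of a bounded arc would be even shorter, not of length $T$), and likewise $J_{t_\ast}$ decays rather than grows. After the correct change of variables the integral becomes $e^{h_{\topp}t}\int_0^{\tau(T,t,x)}(\hat\cL_t f)\circ\horo_u(x_t)\,\diff u$, with $\hat\cL_t$ acting on the \emph{observable} $f\in\cC^2\subset\cB$. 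Your pairing $\langle\cL_{t_\ast}\rho_y,f\rangle$ with $\rho_y\in\cB$ is the wrong way round: an arc current lies in the dual $\cB_w'$ (\Cref{lem:integral_is_cont_funct}), not in $\cB$, and the spectral bound one has is $\|Q_{0,t}f\|_{\cB_w}\le Ce^{-\delta t}\|\hat Z f\|_{\cB}$ in the \emph{weak} norm for $f\in\mathrm{Dom}(\hat Z)$, not a uniform operator-norm gap on $\cB$.

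Second, your sketch omits the bookkeeping that actually produces the exponent $a$. The arc functional lies in $\cB_w'$ only after smoothing the endpoints (\Cref{lem:smoothened_erg_int}), costing $\sC e^{h_{\topp}t}B\|f\|_\infty$; the spectral remainder then contributes $\sC Te^{-\delta t}B^{-2}\|f\|_{\cC^2}$; and one still has to replace $e^{h_{\topp}t}\tau(T,t,x)$ by $T$, which is the content of \Cref{prop:estimate_on_tau} (itself proved by the same scheme). Choosing $t=h_{\topp}^{-1}\log T$ and optimizing $B$ against these three errors gives $a\in\bigl(0,\min\{1-\delta/h_{\topp},\,\delta_*/h_{\topp}\}\bigr)$, not simply $(h_{\topp}-\lambda_1)/h_{\topp}$.
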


%%%%%%%%%%

\section{Renormalization}\label{sec:renorm}
In this section we collect some results on the dynamics of geodesic and of horocycle flows on the unit tangent bundles $M$ and $\tM$.
Although these two flows do not commute, they satisfy a \lq\lq renormalization relation\rq\rq, which plays a fundamental role in the study of the dynamics and the ergodic theory of the horocycle flow.

\begin{lemma}[Renormalization relation]\label{lem:renormalization}
For every $x\in M$ and $s,t \in \R$, there exists a unique $\tau(s,t,x) \in \R$ such that
\begin{equation}\label{eq:renormalization}
\geo_t \circ \horo_s(x) = \horo_{\tau(s,t,x)} \circ \geo_t(x).
\end{equation}
\end{lemma}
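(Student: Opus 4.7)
I would prove the identity \eqref{eq:renormalization} by reducing it to two structural facts about the stable foliation $W^{-}$: its leaves coincide with the horocycle orbits, and it is preserved by the geodesic flow.

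First, since the horocycle vector field $U$ is by construction a unit-length section of $E_{-}$, the $\horo$-orbit through any $y \in M$ is tangent to $E_{-}$ and therefore contained in the stable leaf $W^{-}(y)$. The vector field $U$ is nowhere vanishing and complete, and $W^{-}(y)$ is a connected 1-dimensional (immersed) submanifold, so the map $\tau \mapsto \horo_{\tau}(y)$ is either a diffeomorphism $\R \to W^{-}(y)$ or a periodic immersion onto a circle. The latter possibility is excluded because stable leaves of an Anosov flow on a compact manifold are diffeomorphic to $\R$ (equivalently, the horocycle flow on $M$ has no periodic orbits). Hence $\tau \mapsto \horo_{\tau}(y)$ is a bijection from $\R$ onto $W^{-}(y)$.

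Next, from the $D\geo_t$-invariance of the splitting $TM = E_{-} \oplus E_{0} \oplus E_{+}$ recalled in \eqref{eq:hyperb}, the image under $\geo_t$ of any integral curve of $E_{-}$ is again an integral curve of $E_{-}$; hence $\geo_t(W^{-}(x)) = W^{-}(\geo_t(x))$. Combining the two observations: the point $\horo_s(x)$ belongs to $W^{-}(x)$, so $\geo_t(\horo_s(x))$ belongs to $W^{-}(\geo_t(x))$, which by the first step is parametrized bijectively by $\tau \mapsto \horo_{\tau}(\geo_t(x))$. This produces a unique real number $\tau(s,t,x)$ for which \eqref{eq:renormalization} holds, giving both existence and uniqueness.

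The only conceptually non-routine input is the non-periodicity of horocycle orbits used in the first step; however, this is a standard consequence of the Anosov property of $\geo_t$ on the compact manifold $M$, so no separate argument is required. The remainder of the proof is a direct assembly of the two invariance statements.
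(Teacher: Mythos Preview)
Your argument is correct and follows essentially the same route as the paper: both use that $\horo_s(x)\in W^{-}(x)$ and $\geo_t(W^{-}(x))=W^{-}(\geo_t(x))$ for existence, and the absence of periodic horocycle orbits for uniqueness. The only cosmetic difference is that the paper invokes minimality of the horocycle flow (citing Bowen and Plante) to rule out closed orbits, whereas you appeal directly to the fact that stable leaves of an Anosov flow are copies of $\R$; either justification suffices.
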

\begin{proof}
For any $x \in M$ and $s \in \R$, we have $\horo_s(x) \in W^{-}(x)$. Since $\geo_t(W^{-}(x)) = W^{-}(\geo_t(x))$ there exists $\tau = \tau(s,t,x) \in \R$ such that \eqref{eq:renormalization} is satisfied.

If there were $\tau \neq \tau'$ satisfying \eqref{eq:renormalization}, then there would be a closed orbit for the horocycle flow, which contradicts minimality \cite{Bow, Pla}.
\end{proof}

The renormalization time $\tau(s,t,x)$ of \Cref{lem:renormalization} is intimately related to the curvature of the surface, as the next lemma shows. Its derivative $J(t,x)$ will play a crucial role in this paper.

\begin{lemma}\label{lemma:Jacobi}
For each $x\in M$, the function
\[
J(t,x) := \frac{\partial}{\partial s}\Big\vert_{s=0} \tau(s,t,x)
\]
satisfies
\[
\partial^2_{t}{J}(t,x) + K(\geo_t(x))J(t,x) =0, \qquad J(0,x)=1, \qquad \lim_{t \to \infty} J(t,x) = 0,
\]
where $K$ denotes the curvature. Moreover, given $\overline k, \underline k<0$ such that $\underline k \leq K(\geo_t(x)) \leq \overline k$ for each $x\in M$ and $t\in \R^+$, then
\begin{equation}\label{eq:Jbound}
e^{-\sqrt{-\underline k}t} \le J(t,x) \le e^{-\sqrt{-\overline k}t}, \qquad \forall t\in \R_{>0}, \quad \forall x\in M.
\end{equation}
\end{lemma}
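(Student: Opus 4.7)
The plan is to differentiate the renormalization identity \eqref{eq:renormalization} in $s$ at $s=0$ and to recognize $J(t,x)$ as (the length of) a stable Jacobi field along the geodesic $\gamma_x$. Differentiating \eqref{eq:renormalization} at $s=0$ and using $\tau(0,t,x)=0$ gives
\[
D\geo_t(x)\,U(x) \;=\; J(t,x)\,U(\geo_t(x)),
\]
so $J(t,x)$ is the rescaling factor of the horocycle generator along the geodesic flow. Projecting the variation $\alpha(s,t) := \pi(\geo_t \circ \horo_s(x))$ to $S$ through the bundle projection $\pi\colon M \to S$, I observe that $t \mapsto \alpha(s,t)$ is the $S$-geodesic with initial condition $\horo_s(x) \in T^1S$, so that $Y(t) := \partial_s \alpha(s,t)|_{s=0}$ is an orthogonal Jacobi field along $\gamma_x$ with $|Y(0)| = 1$. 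Using the right-hand side of \eqref{eq:renormalization} instead, I get $Y(t) = J(t,x)\, d\pi\, U(\geo_t(x))$; since in dimension two $d\pi\,U$ is a unit vector orthogonal to $\dot\gamma_x$, this forces $|Y(t)| = J(t,x)$.

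Given this identification, the orthogonal Jacobi equation on a surface reduces to the scalar equation $\partial_t^2 J + K(\geo_t(x))\,J = 0$. The initial condition $J(0,x)=1$ is immediate from $\tau(s,0,x)=s$, and the decay $J(t,x)\to 0$ as $t\to\infty$ follows from \eqref{eq:hyperb}: since $U(x) \in E_-$ and $\|U\|$ is bounded above and below away from zero on the compact manifold $M$, the identity $\|D\geo_t(x)\,U(x)\| = J(t,x)\,\|U(\geo_t(x))\|$ together with the hyperbolicity estimate forces $J(t,x) \le C e^{-\lambda t}$.

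For the quantitative bounds \eqref{eq:Jbound}, I introduce the Riccati variable $u(t) := \partial_t J(t,x)/J(t,x)$, well-defined since $J > 0$. Dividing the Jacobi equation by $J$ gives $u' + u^2 + K(\geo_t(x)) = 0$, and the asymptotic decay of $J$ selects $u$ as the \emph{stable} Riccati solution, which is globally defined on $[0,\infty)$ and everywhere negative. The constants $u_{\overline k} \equiv -\sqrt{-\overline k}$ and $u_{\underline k} \equiv -\sqrt{-\underline k}$ are the stable Riccati solutions in constant curvature $\overline k$ and $\underline k$; a standard Sturm comparison for stable solutions of the Riccati equation with non-positive coefficient gives $-\sqrt{-\underline k} \le u(t) \le -\sqrt{-\overline k}$ for all $t \ge 0$, and integrating from $0$ to $t$ and exponentiating yields \eqref{eq:Jbound}. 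The main technical point is this Sturm comparison, which must be applied specifically to the stable solution rather than to arbitrary solutions of the initial-value problem; this is classical material in the Jacobi-field theory of manifolds of negative curvature and can be carried out with a straightforward monotonicity argument using that $u$ is globally defined on $[0,\infty)$.
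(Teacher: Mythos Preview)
Your argument is correct. The first half (deriving the Jacobi equation, the initial condition, and the decay) is essentially the paper's proof: both differentiate the renormalization identity to obtain $D\geo_t(x)U_x=J(t,x)U_{\geo_t(x)}$ and invoke the standard identification of variations of geodesics with Jacobi fields. Your assertion that $d\pi\,U$ is a unit normal is exactly the normalization under which $J$ itself satisfies the scalar Jacobi equation, and the paper assumes this implicitly as well.

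The genuine difference is in the proof of the bounds \eqref{eq:Jbound}. The paper compares $J$ directly with the constant-curvature solution $\bar J(t)=e^{-\sqrt{-\bar k}\,t}$ via a Wronskian identity: integrating $\bar J(\ddot J+KJ)-J(\ddot{\bar J}+\bar k\bar J)=0$ over $[t,\infty)$ yields $J\dot{\bar J}-\bar J\dot J\ge 0$, so $\bar J/J$ is nondecreasing with value $1$ at $t=0$, whence $J\le\bar J$ (and similarly for the lower bound). Your route through the Riccati variable $u=J'/J$ is an equally classical alternative: the pinching $-\sqrt{-\underline k}\le u\le -\sqrt{-\overline k}$ follows by comparison of stable Riccati solutions, and integrating gives \eqref{eq:Jbound}. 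The subtlety you correctly flag is that the comparison must use the behaviour of $u$ at $+\infty$ rather than an initial condition at $0$; concretely, if $u(t_0)>-\sqrt{-\overline k}$ one compares with the constant-curvature Riccati flow to force $u$ eventually positive (contradicting $J'<0$), and if $u(t_0)<-\sqrt{-\underline k}$ one shows $u$ blows down in finite time (contradicting global existence). Both proofs are short; the paper's Wronskian argument is self-contained in three lines, while your Riccati version makes the link with the geometry of $E_-$ more transparent.
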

\begin{proof}
For convenience, let us set $J_x(t):=J(t,x)$. For any $x \in M$, $\geo_t \circ \horo_s(x)$ defines a 1-parameter family of geodesics, hence 
\[
\frac{\partial}{\partial s}\Big\vert_{s=0} \geo_t \circ \horo_s(x) = D\geo_t(x) U_x
\]
is a Jacobi field. In particular, from \eqref{eq:renormalization} we can write
\[
D\geo_t(x) U_x = J_x(t) U_{\geo_t(x)},
\]
and the function $J_x(t)$ satisfies the Jacobi equation \cite[Chapter 5]{doCarmo}
\[
\ddot{J_x}(t) + K(\geo_t(x))J_x(t) =0.
\]
The initial condition $J_x(0)=1$ follows immediately from \eqref{eq:renormalization}.
Note that, since the curvature $K$ is negative, the function $J_x(t)$ is positive and convex. Since $U \in E_{-}$, the function $J_x$ decays exponentially at infinity, uniformly in $x$. \\
It remains to prove \eqref{eq:Jbound}. Fix $x\in M$ and set $k(t):=K(\geo_t(x))$. Let $\bar J(t)=e^{-\sqrt{-\bar k}t}$ be the solution to the initial value problem $\ddot{\overline J}(t)+ \overline J(t)\overline k=0$, $\overline J(0)=1$ and such that $\lim_{t\to \infty} \overline J(t)=0$. We are going to prove that $J(t)\le \overline J(t)$ for each $t \ge 0$, where we dropped the dependence of $J_x(t)$ on $x$. The other bound is done similarly. Since $\dot J(t)$ and $\dot{\overline J}(t)$ are bounded uniformly in $t$ and using that $J(t)$ and $\overline J(t)$ converge to zero as $t \to +\infty$, an integration by parts gives us
\[
\begin{split}
0&=\int_t^{+\infty} [\overline J(s)(\ddot J(s)+k(s)J(s))-J(s)(\ddot{\overline J}(s)+ \overline J(s)\overline k)]\diff s\\
&=-[\overline J \dot J-J \dot{\overline J}](t)+\int_t^{+\infty}[k(s)-\overline k]J(s)\overline J(s) \diff s.
\end{split}
\]
Therefore, since $k(t)\le \overline k<0$, 
\[
[-\overline J \dot J+J \dot{\overline J}](t) \ge 0,\qquad  \forall t \ge 0,
\]
which implies that
\[
\dfrac{\diff}{\diff t} \left( \frac{\overline J(t)}{J(t)} \right) \cdot J^2(t) \ge 0.
\]
Hence, the function $ \dfrac{\overline J(t)}{J(t)} $ is increasing for each $t \ge 0$, from which it follows
\[
 \frac{\overline J(t)}{J(t)}  \ge \frac{\overline J(0)}{J(0)}=1, \qquad \forall t \ge 0.
\]
The last assertion follows directly from \eqref{eq:hyperb} and \eqref{eq:Jbound}.
\end{proof}

\begin{remark}\label{rmk:Ju}
Note that an analogous computation as in the previous lemma applied to the unstable vector field $V$ instead of $U$ yields the same results for a function $J^{u}_{t}$ satisfying 
\[
D\geo_{-t}(x)V_x=J^{u}(-t, x)V_{\geo_{-t}(x)}.
\]
\end{remark}

Let us recall that we have an invariant splitting $TM = E_{-} \oplus E_0 \oplus E_{+}$ of the tangent bundle $TM$, where $E_0=\langle X \rangle $ and $X$ is the generator of the flow $(\geo_t)_{t \in \R}$, and where $E_{-} = \langle U \rangle$ and $U$ is the generator of the stable horocycle flow $(\horo_t)_{t\in \R}$. For further purposes, we denote by $V$ the generator of the unstable horocycle flow $(\horo^+_t)_{t\in \R}$. From \Cref{lemma:Jacobi} and Remark \ref{rmk:Ju}, we have
 \begin{equation}\label{eq:potential}
 \begin{split}
 J_{-t}(x)&=\exp\left({\int_0^t -\operatorname{div} (X|_{E^-})\circ \geo_{-s}(x)\mathrm d s}\right)=D\geo_{-t}(x)|_{E_-},\\
  J^u_{-t}(x)&=\exp\left({\int_0^{-t} \operatorname{div} (X|_{E^+})\circ \geo_s(x)\mathrm d s}\right)=D\geo_{t}(x)|_{E_+},
 \end{split}
 \end{equation}
 where  $D\geo_{-t}(x)|_{E_-}$ denotes the derivative of $\geo_{-t}$ along the (one-dimensional) stable direction $E_-$, and similarly for the derivative $D\geo_{t}(x)|_{E_+}$ along the unstable direction $E_+$. 
In particular, 
\begin{equation}\label{eq:div}
\begin{split}
&\frac{\diff}{\diff t}\Big\vert_{t=0} D\geo_{-t}(U)=\Phi^- U,\\
&\frac{\diff}{\diff t}\Big\vert_{t=0} D\geo_{t}(V)=\Phi^+ V,
\end{split}
\end{equation}
where $\Phi^{\pm}:= \pm \operatorname{div}(X|_{E_{\pm}})$ are strictly positive $\cC^{2-\varepsilon}$ functions for all $\varepsilon > 0$ (see \cite[p.56-57]{HuKa} or \cite[(3.1)]{Gre}). 
In particular, from \eqref{eq:hyperb}, we have $\sqrt{-\overline k}\ge \lambda$.

An immediate consequence of the previous results is the following.
\begin{corollary}\label{cor:inverse_of_Jt}
	For any $x\in M$ and $t,s \in \R$, we have 
	\[
	J_t(x)^{-1} = J_{-t}(\geo_{t}(x)) \qquad \text{and} \qquad \frac{\partial \tau}{\partial s}(s,t,x) = J_t(\horo_s(x)).
	\]
\end{corollary}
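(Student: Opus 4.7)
The core tool is the cocycle identity for $\tau$. From the defining relation \eqref{eq:renormalization} and the flow property $\geo_{t+t'} = \geo_{t'} \circ \geo_t$, $\horo_{s+s'} = \horo_{s'} \circ \horo_s$, I would first derive two compatibility relations:
\begin{equation*}
\tau(s+s', t, x) = \tau(s, t, x) + \tau(s', t, \horo_s(x)), \qquad \tau(s, t+t', x) = \tau\bigl(\tau(s,t,x),\, t',\, \geo_t(x)\bigr).
\end{equation*}
Both follow by evaluating $\geo_t \circ \horo_{s+s'}(x)$ (respectively, $\geo_{t+t'} \circ \horo_s(x)$) in two ways using \eqref{eq:renormalization} and invoking the uniqueness part of \Cref{lem:renormalization}.

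For the second equality of the corollary, I would differentiate the first cocycle relation with respect to $s'$ at $s' = 0$: the right-hand side yields $\frac{\partial}{\partial s'}|_{s'=0}\tau(s', t, \horo_s(x)) = J(t, \horo_s(x)) = J_t(\horo_s(x))$ by the very definition of $J$ in \Cref{lemma:Jacobi}, while the left-hand side is $\frac{\partial \tau}{\partial s}(s,t,x)$. This gives the claim immediately.

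For the first equality, I would specialize the second cocycle relation to $t' = -t$. Since $\geo_0 = \mathrm{id}$ and uniqueness forces $\tau(s, 0, x) = s$, one obtains the identity $s = \tau\bigl(\tau(s, t, x),\, -t,\, \geo_t(x)\bigr)$. Differentiating in $s$ at $s = 0$ (noting $\tau(0, t, x) = 0$ since $\horo_0 = \mathrm{id}$) and applying the chain rule yields
\begin{equation*}
1 = \frac{\partial \tau}{\partial s}(0, -t, \geo_t(x)) \cdot \frac{\partial \tau}{\partial s}(0, t, x) = J_{-t}(\geo_t(x)) \cdot J_t(x),
\end{equation*}
which is exactly the required inverse relation.

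Nothing here should be a serious obstacle: the only mild care needed is justifying smoothness of $\tau$ in $(s,t,x)$ (so the derivatives make sense), which follows from the implicit definition of $\tau$ via the smooth flows together with the transversality of $U$ to the flow direction, and justifying the uniqueness step, which is already established in \Cref{lem:renormalization}. Both identities could alternatively be obtained from the Jacobi-field interpretation $D\geo_t(x) U_x = J_t(x) U_{\geo_t(x)}$ by applying $D\geo_{-t}(\geo_t(x))$ to both sides, but the cocycle route handles both statements uniformly and is what I would write up.
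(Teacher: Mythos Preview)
Your proof is correct. For the second identity it is verbatim the paper's argument (differentiate the additive cocycle in $s$). For the first identity the paper instead appeals directly to \eqref{eq:potential}, i.e.\ to $J_t(x)=D\geo_t(x)|_{E_-}$, from which $J_{-t}(\geo_t(x))\,J_t(x)=1$ is immediate by the chain rule for $D\geo_{-t}\circ D\geo_t=\mathrm{id}$; this is precisely the alternative you sketch at the end, so your time-cocycle derivation is a minor but perfectly valid variant rather than a genuinely different route.
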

\begin{proof}
	The first claim follows directly from \eqref{eq:potential}. The second is a consequence of the cocycle relation $\tau(s+r,t,x) = \tau(r,t,\horo_s(x)) + \tau(s,t,x)$ and the equality $J(t,x) = \frac{\partial}{\partial s}\big\vert_{s=0} \tau(s,t,x)$ from \Cref{lemma:Jacobi}.
\end{proof}

As we already remarked, in case of constant curvature, from \Cref{lemma:Jacobi} we obtain
\[
J_x(t) = e^{-\sqrt{-K} \, t}, \qquad \text{hence} \qquad \tau(s,t,x) = e^{-\sqrt{-K} \, t} \cdot s.
\]

We say that the parametrization of the stable foliation defined by the horocycle flow is \emph{uniformly contracting}.
In the case of variable curvature, although the function $\tau(s,t,x)$ does not have such a simple expression, there still exists a uniformly contracting parametrization of $W^{-}$, as shown by Marcus, from Margulis's work \cite{Marcus, Margulis}.

\begin{proposition}[{\cite{Marcus, Margulis}}]\label{prop:Margulis}
Let $h_{\topp} >0$ denote the topological entropy of the time-1 map $\geo_1$. There exists a continuous additive cocycle $\varpi(t,x)$ such that the flow $(\widetilde{\horo}_t)_{t \in \R}$ defined by
\[
\widetilde{\horo}_{\varpi(t,x)}(x) = \horo_t(x)
\]
satisfies
\[
\geo_t \circ \widetilde{\horo}_s(x) = \widetilde{\horo}_{e^{-h_{\topp} \, t} \cdot s} \circ \geo_t(x).
\]
Moreover, $(\widetilde{\horo}_t)_{t \in \R}$ is uniquely ergodic and the unique probability invariant measure $m$ is the measure of maximal entropy for the geodesic flow.
\end{proposition}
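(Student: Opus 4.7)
The plan is to follow the classical Marcus--Margulis construction: first build a family of conditional measures on stable leaves that scales uniformly under the geodesic flow, and then reparametrize the horocycle flow to move at unit speed with respect to these measures.

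The first step, which is the technical heart of the argument, is to produce for each $x \in M$ a Radon measure $\mu^-_x$ on the stable leaf $W^-(x)$ that is continuous in $x$ (in a local weak-$*$ sense), invariant under unstable-center holonomy, and satisfies the uniform scaling
\[
\mu^-_{\geo_r(x)}\bigl(\geo_r(E)\bigr) \; = \; e^{-h_{\topp} r}\, \mu^-_x(E), \qquad E \subset W^-(x)\text{ Borel},\ r \in \R.
\]
For a contact Anosov flow on a compact three-manifold this is well known: one obtains $\mu^-_x$ either by equidistributing normalized Dirac masses along preimages of long periodic orbits of $\geo_r$, or via thermodynamic formalism for the geometric potential calibrated to topological entropy $h_{\topp}$. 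Topological transitivity of $(\geo_r)$ ensures that $\mu^-_x$ has full support on $W^-(x)$. This is the core content of \cite{Marcus, Margulis}, and I would invoke it as a black box rather than reprove it.

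Given the family $\{\mu^-_x\}$, I would identify $\mu^-_x$ with a Radon measure on $\R$ via the horocycle parameter $s \mapsto \horo_s(x)$ and define
\[
\alpha(t,x) \; := \; \mu^-_x\bigl(\{\horo_s(x) : 0 \leq s \leq t\}\bigr) \quad (t \geq 0),
\]
with the obvious sign convention for $t<0$. Continuity of $x \mapsto \mu^-_x$ gives continuity of $\alpha$. Since $W^-(\horo_s(x))=W^-(x)$ and $\mu^-$ is constant along stable leaves, additivity of measure yields the cocycle identity $\alpha(s+t,x)=\alpha(s,x)+\alpha(t,\horo_s(x))$. Full support of $\mu^-_x$ makes $t \mapsto \alpha(t,x)$ a strictly increasing homeomorphism of $\R$, so $\widetilde{\horo}_{\alpha(t,x)}(x)=\horo_t(x)$ unambiguously defines a continuous flow $(\widetilde{\horo}_s)_{s \in \R}$.

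The renormalization identity follows by combining the scaling of $\mu^-$ with \Cref{lem:renormalization}: writing $\tau=\tau(t,r,x)$, one has $\geo_r(\{\horo_s(x):s\in[0,t]\})=\{\horo_u(\geo_r x):u\in[0,\tau]\}$, hence
\[
\alpha\bigl(\tau(t,r,x),\,\geo_r(x)\bigr) \; = \; \mu^-_{\geo_r(x)}\bigl(\geo_r\{\horo_s(x):s\in[0,t]\}\bigr) \; = \; e^{-h_{\topp} r}\,\alpha(t,x),
\]
and substituting into the definitions gives $\geo_r \circ \widetilde{\horo}_{\alpha(t,x)}(x)=\widetilde{\horo}_{e^{-h_{\topp} r}\alpha(t,x)} \circ \geo_r(x)$; surjectivity of $\alpha(\cdot,x)$ upgrades this to all $s \in \R$. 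To conclude, I would construct the Margulis measure of maximal entropy $m$ locally as the product $\mu^- \otimes \mu^+ \otimes \diff t$ in a flow box, with $\mu^+$ the analogous Margulis family on unstable leaves; the scalings $e^{\mp h_{\topp} r}$ cancel under $\geo_r$, so $m$ is $\geo_r$-invariant with stable conditionals $\mu^-_x$, and by the unit-speed property on each leaf it is $\widetilde{\horo}$-invariant as well. Unique ergodicity of $(\widetilde{\horo}_s)$ follows because any invariant probability measure disintegrates along stable leaves into Radon measures satisfying the same scaling under $\geo_r$, which by uniqueness of the Margulis stable conditionals for the mixing Anosov flow $(\geo_r)$ must be proportional to $\mu^-_x$. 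The main obstacle is clearly Step~1; once it is granted, the rest is essentially bookkeeping.
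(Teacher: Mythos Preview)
The paper does not give its own proof of this proposition: it is stated with the citation \cite{Marcus, Margulis} and used as a black box. Your sketch is therefore not competing with anything in the paper; it is an outline of the classical Marcus--Margulis argument that the authors are importing wholesale.

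As a sketch, your construction of $\alpha$ from the Margulis stable conditionals $\mu^-_x$ and the derivation of the scaling identity are correct and standard. One small inaccuracy: in your unique ergodicity paragraph you justify that the stable conditionals of an arbitrary $\widetilde{\horo}$-invariant probability $\nu$ coincide with $\mu^-_x$ by invoking ``the same scaling under $\geo_r$'' and ``uniqueness of the Margulis stable conditionals for the mixing Anosov flow''. But $\nu$ need not be $\geo_r$-invariant, so there is no a~priori reason its leafwise conditionals scale under $\geo_r$. The correct argument is more direct: stable leaves are exactly $\widetilde{\horo}$-orbits, so the conditional of $\nu$ on $W^-(x)$ is a Radon measure on $\R$ (in the $\widetilde{\horo}$-time parameter) invariant under all translations, hence Lebesgue in that parameter, hence equal to $\mu^-_x$ up to a multiplicative factor depending only on the leaf. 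The remaining work is to show this factor is constant, which is where transitivity of the center-unstable foliation (or Marcus's original averaging argument) enters. This is a reorganisation rather than a genuine gap, but as written your last step conflates $\geo$-invariance with $\widetilde{\horo}$-invariance.
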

From the previous result it is possible to deduce the asymptotics of $\tau(s,t,x)$ as $s \to \infty$, as the next lemma shows. It was proven originally by Marcus \cite{Marcus2}, we reproduce the proof here for completeness.
\begin{lemma}\label{lemma:Marcus}
There exists a constant $C_\tau >0$ such that
\[
C_\tau^{-1} \leq \frac{\tau(s,t,x)}{s} e^{h_{\topp} \, t} \leq C_\tau,
\]
for all $x \in M$, $t \leq 0$, and $s \geq 1$. Moreover,
\[
\lim_{s \to \infty} \frac{\tau(s,t,x)}{s} = e^{-h_{\topp} \, t}
\]
uniformly in $x \in M$.
\end{lemma}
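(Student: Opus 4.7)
The plan is to compare the unit-speed horocycle flow $(\horo_t)$ with Margulis's uniformly contracting flow $(\widetilde{\horo}_t)$ from \Cref{prop:Margulis} through the time-change cocycle between them. First I would define $\alpha(t,x)$ by $\horo_t(x) = \widetilde{\horo}_{\alpha(t,x)}(x)$. Since both flows parametrize the same oriented one-dimensional foliation, $\alpha$ is a continuous, strictly increasing cocycle over $(\horo_t)$; differentiating the defining relation in $t$ yields
\[
\alpha(t,x) \;=\; \int_0^t \psi(\horo_s(x))\diff s, \qquad \psi \;:=\; U/\widetilde{U},
\]
where $\psi$ is a continuous, strictly positive function on the compact base $M$. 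Letting $0 < \psi_{\min} \le \psi \le \psi_{\max} < \infty$ be its uniform bounds, one has the linear sandwich $\psi_{\min}\sigma \le \alpha(\sigma,y) \le \psi_{\max}\sigma$ for all $\sigma \ge 0$ and $y \in M$.

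Next I would compute $\geo_t\circ \horo_s(x)$ in two ways, using \Cref{lem:renormalization} on the one hand and \Cref{prop:Margulis} together with the definition of $\alpha$ on the other. Equating the two expressions produces the master identity
\[
\alpha(\tau(s,t,x),\,\geo_t(x)) \;=\; e^{-h_{\topp}\, t}\,\alpha(s,x),
\]
which will be the sole algebraic input for both claims.

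For the two-sided bound, observe that for $t \le 0$, \Cref{lemma:Jacobi} and \Cref{cor:inverse_of_Jt} give $\partial_s\tau(s,t,x) = J(t,\horo_s(x)) \ge 1$, whence $\tau(s,t,x) \ge s \ge 1$ whenever $s\ge 1$. Substituting the linear sandwich for $\alpha$ into the master identity then yields
\[
\frac{\psi_{\min}}{\psi_{\max}}\,e^{-h_{\topp}\, t} \;\le\; \frac{\tau(s,t,x)}{s} \;\le\; \frac{\psi_{\max}}{\psi_{\min}}\,e^{-h_{\topp}\, t},
\]
so the constant $C_\tau := \psi_{\max}/\psi_{\min}$ discharges the first assertion.

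For the uniform limit I would invoke the unique ergodicity of $(\horo_t)$ from \cite{Marcus2}: Birkhoff averages of the continuous observable $\psi$ converge uniformly, so $\alpha(\sigma,y)/\sigma \to c := \mu(\psi) > 0$ uniformly in $y \in M$. Since $\alpha(s,x) \to \infty$ as $s\to\infty$, the master identity forces $\tau(s,t,x) \to \infty$; dividing the identity by $s$ and passing to the limit,
\[
\frac{\alpha(\tau(s,t,x),\,\geo_t(x))}{\tau(s,t,x)}\cdot\frac{\tau(s,t,x)}{s} \;=\; e^{-h_{\topp}\, t}\,\frac{\alpha(s,x)}{s} \;\longrightarrow\; c\cdot e^{-h_{\topp}\, t},
\]
with uniform convergence in $x$ of both $\alpha$-ratios inherited from unique ergodicity. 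Dividing by $c > 0$ yields $\tau(s,t,x)/s \to e^{-h_{\topp}\, t}$ uniformly in $x$. The only nontrivial ingredient here is precisely this uniform Birkhoff convergence; once that is in hand, the rest reduces to algebra with the cocycle identity, so I expect the main obstacle to be cleanly importing Marcus's uniform unique ergodicity statement in a way that applies to the non-smooth observable $\psi$ (which is only continuous on $M$, not smooth, since it involves the horocycle bundle of variable curvature).
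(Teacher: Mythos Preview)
Your approach is essentially the same as the paper's: both derive the master identity $\alpha(\tau(s,t,x),\geo_t(x)) = e^{-h_{\topp} t}\alpha(s,x)$ by comparing the unit-speed and Margulis parametrizations, and both conclude the uniform limit via unique ergodicity of $(\horo_t)$. The paper cites \cite[Lemma C.3]{GiLiPo} for the two-sided bound, whereas you supply it directly from the sandwich on $\alpha$, which is fine.

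One technical point worth cleaning up: you obtain the observable $\psi = U/\widetilde{U}$ by differentiating $\alpha$ in $t$, but \Cref{prop:Margulis} only guarantees that $\alpha$ is a \emph{continuous} additive cocycle, and the Margulis parametrization is in general not known to be $\mathscr{C}^1$ in variable curvature, so $\psi$ may fail to exist as a continuous function. The paper avoids this by applying the uniform ergodic theorem to the continuous function $y \mapsto \alpha(1,y)$ via the cocycle decomposition $\alpha(n,x) = \sum_{j=0}^{n-1}\alpha(1,\horo_j(x))$, which gives $\alpha(s,x)/s \to a$ uniformly without ever invoking a density. Replacing your $\psi$ by $\alpha(1,\cdot)$ in this way removes the only genuine obstacle you flagged; the rest of your argument then goes through verbatim.
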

\begin{proof}
	The first estimate can be found in \cite[Lemma C.3]{GiLiPo}.
	We reproduce Marcus's argument for the second claim.
From \Cref{prop:Margulis} and unique ergodicity of the horocycle flow \cite{Marcus}, we deduce that there exists a constant $a >0$ such that
\[
\frac{1}{n} \varpi(n,x) = \frac{1}{n} \sum_{j=0}^{n-1} \varpi(1,\horo_j(x)) \to a,
\]
as $n \in \N$ tends to infinity, uniformly in $x\in M$. It is easy to see that the same uniform limit $\frac{1}{s} \varpi(s,x) \to a$ holds when $s \in \R$ tends to infinity.
Since we have
\[
\widetilde{\horo}_{e^{-h_{\topp} \, t} \varpi(s,x)} \circ \geo_t(x) = \geo_t \circ \widetilde{\horo}_{ \varpi(s,x)} (x) =  \geo_t \circ {\horo}_{s}(x) = \horo_{\tau(s,t,x)} \circ \geo_t = \widetilde{\horo}_{\varpi(\tau(s,t,x), \geo_t(x))} \circ \geo_t(x), 
\]
it follows that
\[
e^{-h_{\topp} \, t} \varpi(s,x) = \varpi(\tau(s,t,x), \geo_t(x)).
\]
We will recover this relation in \Cref{rk:leafwise_measure}, see \eqref{eq:self_similarity_of_alpha}. 
We deduce that 
\[
\frac{\tau(s,t,x)}{s} = \frac{\tau(s,t,x)}{ \varpi(\tau(s,t,x), \geo_t(x))} \, \frac{e^{-h_{\topp} \, t} \varpi(s,x) }{s},
\]
which tends to $e^{-h_{\topp} \, t}$ as $s \to \infty$, uniformly in $x\in M$. It is easy to see that the limit is also uniform in $t\leq 0$,  therefore the proof is complete.
\end{proof}

%%%%%%%%%%%%%%%%%%%%%

Finally, as we will be working on the cover $\widetilde{M}$, it is essential to ensure that our objects are $\Deck$-invariant, enabling us to treat them as defined on the compact manifold $M$. This is demonstrated in the following two results.

\begin{lemma}\label{lem:flows_commute}
The geodesic and horocycle flows commute with all deck transformations.
\end{lemma}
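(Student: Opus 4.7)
The plan is to observe that both flows are intrinsic to the Riemannian structure, and deck transformations are isometries by construction. The metric on $\tS$ is defined as the pullback via $p_0$ of the metric on $S$, and since any $D\in\Deck$ satisfies $p_0\circ D = p_0$, it is automatically an isometry of $\tS$. Its canonical lift to $\tM = T^1\tS$, acting as $dD$ on tangent vectors, preserves the unit sphere bundle and covers $D$; it is again a diffeomorphism, which by a slight abuse of notation we continue to denote $D$, and it satisfies $p\circ D = p$.

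For the geodesic flow, I would invoke the standard fact that the Levi--Civita connection, and hence the geodesic equation, is invariant under isometries. Therefore, if $\tilde\gamma_{\tilde x}(t)$ denotes the unit speed geodesic on $\tS$ with initial condition $\tilde x \in \tM$, then $D\circ \tilde\gamma_{\tilde x}(t) = \tilde\gamma_{D\cdot \tilde x}(t)$. Differentiating in $t$ yields $D\circ \tilde \geo_t(\tilde x) = \tilde \geo_t(D\cdot \tilde x)$, as required.

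For the horocycle flow, I would use the fact that $\tilde\horo_s$ is the unique lift of $\horo_s$ to $\tM$ through $p$: indeed, the generator $\tilde U$ of $\tilde\horo_s$ spans the stable subbundle $\tilde E_{-}$ on $\tM$, and since $p$ is a local isometry it carries $\tilde U$ to $U$, so $p\circ \tilde\horo_s = \horo_s\circ p$ for all $s\in\R$. Now define $\psi_s := D^{-1}\circ \tilde\horo_s\circ D$. This is a continuous family of diffeomorphisms of $\tM$ with $\psi_0 = \mathrm{id}_{\tM}$, and using $p\circ D = p$ (together with $p\circ D^{-1} = p$) we compute
\[
p\circ \psi_s \;=\; p\circ D^{-1}\circ \tilde\horo_s\circ D \;=\; p\circ \tilde\horo_s\circ D \;=\; \horo_s \circ p \circ D \;=\; \horo_s\circ p.
\]
Thus $\psi_s$ is also a lift of $\horo_s$ starting at the identity; by uniqueness of lifts of continuous flows under the covering $p$, $\psi_s = \tilde\horo_s$, which is exactly $D\circ \tilde\horo_s = \tilde\horo_s\circ D$.

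The only step that requires a moment of care is the uniqueness of the lift, which would otherwise leave open the possibility that $D$ might reverse the orientation of $\tilde E_{-}$ and conjugate the horocycle flow to its time reversal; the formulation above bypasses the orientation issue because $\psi_0$ is automatically the identity. This is essentially the only obstacle, and it is resolved by the standard covering-space lifting argument rather than by an explicit orientation discussion.
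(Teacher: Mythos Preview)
Your proof is correct, but it takes a different route from the paper's. The paper gives a single, purely topological argument applying uniformly to both flows: since the time-$t$ maps $\geo_t$ and $\horo_t$ are isotopic to the identity, they act trivially on $H_1(M,\Z)$; because the deck group of an abelian cover is a quotient of $H_1(M,\Z)$, this trivial action forces the (canonical) lifts to commute with every $D\in\Deck$. You instead treat the two flows separately: a Riemannian argument for the geodesic flow (deck transformations are isometries, hence preserve geodesics), and a direct covering-space uniqueness-of-lifts argument for the horocycle flow. Your horocycle argument is in fact the mechanism underlying the paper's $H_1$ statement, just unpacked explicitly without the homological language; and your care about the possible orientation reversal of $\tilde E_-$, resolved by anchoring at $\psi_0=\mathrm{id}$, is a nice point the paper leaves implicit. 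The paper's approach has the advantage of being uniform and manifestly applicable to \emph{any} flow on $M$ (not just these geometric ones), while your approach is more concrete and self-contained, avoiding any appeal to how the induced action on homology controls commutation with deck transformations.
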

\begin{proof}
We can identify any $D \in \Deck$ with $[\gamma] + \Gamma^{\abel, G}$, where $[\gamma]$ is a homology class in  $H_1(M,\Z)$. 
Since the time-$t$ map $\geo_t$ is isotopic to the identity, it acts trivially on the homology, namely $(\geo_t)_\ast [\gamma] = [\gamma]$. This implies that $\geo_t$ commutes with $D$. The same proof applies to the horocycle flow $\horo_t$.
\end{proof}

\begin{corollary}\label{cor:Jt_invariant}
For any $s,t \in \R$, the functions $\tau(s,t,\cdot)$ from \Cref{lem:renormalization} and $\varpi(t,\cdot)$ from \Cref{prop:Margulis} are $\Deck$-invariant.
\end{corollary}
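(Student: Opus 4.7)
The plan is to use the defining equation \eqref{eq:renormalization} for $\tau$ twice, once at the point $Dx$ and once at $x$, and then invoke uniqueness, with the commutation provided by \Cref{lem:flows_commute} doing all the work.

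More precisely, fix $s,t \in \R$ and $D \in \Deck$. On the one hand, by definition of $\tau(s,t,Dx)$,
\[
\geo_t \circ \horo_s(Dx) = \horo_{\tau(s,t,Dx)} \circ \geo_t(Dx).
\]
On the other hand, since $D$ commutes with both $\geo_t$ and $\horo_s$ by \Cref{lem:flows_commute}, we can push $D$ to the outside in the left-hand side, apply \eqref{eq:renormalization} for the point $x$, and push $D$ back through $\horo_{\tau(s,t,x)}$:
\[
\geo_t \circ \horo_s(Dx) = D \circ \geo_t \circ \horo_s(x) = D \circ \horo_{\tau(s,t,x)} \circ \geo_t(x) = \horo_{\tau(s,t,x)} \circ \geo_t(Dx).
\]
Comparing both expressions, the uniqueness part of \Cref{lem:renormalization} forces $\tau(s,t,Dx) = \tau(s,t,x)$, which is the claim. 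There is essentially no obstacle here: the content of the corollary is entirely encoded in the commutation lemma, and the uniqueness of $\tau$ (a consequence of minimality of the horocycle flow) makes the argument immediate.
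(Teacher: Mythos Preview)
Your proof is correct and follows essentially the same approach as the paper: both arguments combine the commutation from \Cref{lem:flows_commute} with the uniqueness part of \Cref{lem:renormalization} to conclude $\tau(s,t,Dx)=\tau(s,t,x)$. Your write-up is in fact slightly cleaner than the paper's, which contains a couple of apparent typos in its displayed chain of equalities.
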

\begin{proof}
For any $D \in \Deck$, by \Cref{lem:renormalization} and \Cref{lem:flows_commute} we have
\[
D(\horo_{\tau(s,t,D(x))}(x)) = \horo_{\tau(s,t,D(x))}(D(x)) = \geo_{-t} \circ \horo_s \circ \geo_t(D(x)) = D(\geo_{-t} \circ \horo_s \circ \geo_t(x)).
\]
Again by \Cref{lem:renormalization}, we conclude $\tau(s,t,D(x)) = \tau(s,t,x)$. Similarly,
\[
\widetilde{\horo}_{\varpi(t,D(x))}(D(x)) = \horo_t(D(x)) = D(\horo_t(x)) = D(\widetilde{\horo}_{\varpi(t,x)}(x)).
\]
Since, for any $r\in \R$, the map $\widetilde{\horo}_r$ is isotopic to the identity, as in \Cref{lem:flows_commute}, it commutes with $D$. This implies that $\varpi(t,D(x)) = \varpi(t,x)$.
\end{proof}

\section{Twisted Hilbert spaces}\label{sec:twisted_hilbert_spaces}
In this section we introduce the Fourier decomposition of our space. We recall the notations from \Cref{sec:abelian_covers}. 
Let us fix a compact connected subset $\cF \subset \tM$ whose boundary has zero measure such that the restriction of $p$ to the interior of $\cF$ is injective and $p(\cF) = M$. We say that $\cF$ is a fundamental domain for the cover $p$.

Any function on $M$ can be seen as a $\Deck$-invariant function on $\tM$, and vice-versa. Under this identification, it is not hard to see that 
\[
\int_M f \diff \vol = \int_{\cF} f \diff \vol.
\]
In particular, for all $q\geq 1$, the Banach spaces $L^q(M)$ and $L^q(\cF)$ are naturally isomorphic.

For any $i=1,\dots, d$, let $\omega_i \in H^1(S,\R)$ be the cohomology class defined by
\[
\omega_i(\Gamma^{\abel, G})=0, \qquad \text{and} \qquad \omega_i([\gamma_j]) = \delta_{i \, j},
\]
where $ \delta_{i \, j}$ is the Kronecker delta. 
By the Hodge Theory, we can identify $\omega_i$ with a harmonic 1-form on $S$. We denote by $\mathcal{H}$ the real vector space 
$\cH := \langle \omega_1, \dots, \omega_d \rangle$ and by $\cH(\Z)$ the $\Z$-module $\Z\omega_1 \oplus \cdots \oplus \Z \omega_d$. Their quotient is a $d$-dimensional torus
\[
\T^d := \cH / \cH(\Z).
\]

For any $\omega \in \T^d$, we define
\[
E_\omega \colon \Deck \to \mathbb{S}, \qquad E_\omega([\gamma] + \Gamma^{\abel, G}) = e^{2 \pi \imath \omega \cdot [\gamma]}.
\]
Note that, if a function $f \colon \tM \to \C$ satisfies $f \circ D^{-1} = E_\omega(D) \, f$ for all $D \in \Deck$, then $|f|$ is a $\Deck$-invariant function and hence it is well defined on $M$. 
Given $\omega \in \T^d$, we define
\[
L^2(M,\omega) := \left\{ f \colon \tM \to \C \ : \ f \circ D^{-1} = E_\omega(D) \, f \text{\ for all $D \in \Deck$, and\ } \int_{\cF} |f|^2 \diff \vol < \infty \right\}. 
\]
We equip $L^2(M,\omega)$ with the inner product 
\[
\langle f,g\rangle = \int_{\cF} f \cdot \overline{g} \, \diff \vol,
\]
which turns into a Hilbert space.
For every integer $\ell \geq 0$, we also set
\[
\mathscr{C}^\ell(M,\omega) = L^2(M,\omega) \cap \mathscr{C}^\ell(\tM).
\]

For any continuous function $f \in \mathscr{C}^0_c(\tM)$ with compact support, let us define
\[
\pi_\omega(f)(x)= \sum_{D \in \Deck} E_\omega(D) \cdot f\circ D(x).
\]
Note that, since $f$ has compact support, the sum on the right hand side above is finite for any $x \in \tM$.
\begin{lemma}\label{lem:proj_cl}
For every $\ell \geq 0$, 
\[
\pi_\omega \colon \mathscr{C}^\ell_c(\tM) \to \mathscr{C}^\ell(M,\omega).
\]
Moreover, for every $f \in \mathscr{C}^\ell_c(\tM)$ and for any $x \in \tM$, we have
\[
f(x) = \int_{\T^d} \pi_\omega(f)(x) \diff \omega.
\]
\end{lemma}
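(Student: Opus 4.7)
The argument reduces to two ingredients: the properness of the $\Deck$-action on $\tM$ (which makes the sum defining $\pi_\omega(f)$ locally finite), and the fact that $E_\omega$ is a character of $\Deck$ (which yields equivariance and an orthogonality relation on $\T^d$).

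First I would check that $\pi_\omega(f) \in \mathscr{C}^\ell(\tM)$. Fix any compact set $K \subset \tM$. Since $\Deck$ acts properly discontinuously on $\tM$ and $\supp(f)$ is compact, the set
\[
\Deck(K,f) := \{ D \in \Deck \colon D(K) \cap \supp(f) \neq \emptyset \}
\]
is finite; hence on $K$ the sum defining $\pi_\omega(f)$ reduces to the finite sum $\sum_{D \in \Deck(K,f)} E_\omega(D) \, f\circ D$, which is $\mathscr{C}^\ell$. Next, for any $D_0 \in \Deck$, re-indexing the sum via $D'' = D'D_0^{-1}$ and using that $E_\omega \colon \Deck \to \mathbb{S}$ is a character (additivity of $[\gamma] \mapsto \omega \cdot [\gamma]$ mod $\Z$) gives
\[
\pi_\omega(f)(D_0^{-1}(x)) = \sum_{D'} E_\omega(D') f((D'D_0^{-1})(x)) = E_\omega(D_0) \sum_{D''} E_\omega(D'') f(D''(x)) = E_\omega(D_0)\, \pi_\omega(f)(x),
\]
which is the required equivariance. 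In particular $|\pi_\omega(f)|$ is $\Deck$-invariant, so it is well-defined on the compact manifold $M$, and the bound $\|\pi_\omega(f)\|_{L^\infty(\cF)} \le \#\Deck(\overline{\cF},f) \cdot \|f\|_\infty < \infty$ (together with $\vol(\cF) < \infty$) gives $\pi_\omega(f) \in L^2(M,\omega)$.

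For the inversion formula, I would exchange the sum and the integral — this is legitimate because for each fixed $x$ the sum is finite — to get
\[
\int_{\T^d} \pi_\omega(f)(x) \diff\omega = \sum_{D \in \Deck} \left( \int_{\T^d} E_\omega(D) \diff\omega \right) f(D(x)).
\]
Writing $D = \sum_j n_j D_j$ in the basis $\{D_j\}$ of $\Deck \simeq \Z^d$ and $\omega = \sum_j t_j \omega_j$ with $t_j \in [0,1)$, by the definition of $\omega_j$ the integrand is $\exp(2\pi \imath \sum_j n_j t_j)$, and Fubini factorizes the integral into $\prod_j \int_0^1 e^{2\pi \imath n_j t_j}\diff t_j$, which equals $1$ if $D$ is the identity and $0$ otherwise. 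Only the term $D = \mathrm{id}$ survives, giving $f(x)$.

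\textbf{Main obstacle.} No step is technically deep; the only point requiring care is the normalization of Lebesgue measure on $\T^d = \cH / \cH(\Z)$ relative to the chosen basis $\{\omega_i\}$, which one must fix so that the fundamental domain $[0,1)^d$ has unit mass in order for the orthogonality relation (and hence the inversion formula) to produce exactly $f(x)$ rather than a nonzero scalar multiple.
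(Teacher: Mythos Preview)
Your proof is correct and follows essentially the same route as the paper: local finiteness from proper discontinuity gives $\mathscr{C}^\ell$-regularity, the character property of $E_\omega$ gives equivariance, and orthogonality of characters on $\T^d$ gives the inversion formula. The only substantive difference is the $L^2$ step: you use the trivial bound $\|\pi_\omega(f)\|_{L^\infty(\cF)} \le \#\Deck(\overline{\cF},f)\,\|f\|_\infty$, whereas the paper runs a Cauchy--Schwarz argument with weights $\|D\|^{d+1}$ to obtain $\int_\cF |\pi_\omega(f)|^2 \le \widetilde{C}(f)\int_{\tM}|f|^2$; your bound is simpler and entirely sufficient for the lemma as stated, while the paper's bound has the mild advantage of being an $L^2$-to-$L^2$ estimate (relevant if one later wants to extend $\pi_\omega$ beyond compactly supported continuous functions).
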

\begin{proof}
Fix $\omega \in \T^d$. 
From the fact that $\Deck$ acts properly discontinuously, it follows immediately that $\pi_\omega(f)$ is a $\mathscr{C}^\ell$-function whenever $f \in \mathscr{C}^\ell_c(\tM)$.

Let us fix a norm $\| \cdot \|$ on $\Deck \simeq \Z^d$.
Since $f$ has compact support, there exists a constant $C(f)>0$ such that $f\circ D(x)=0$ for all $x \in \cF$ whenever $\|D\| \geq C(f)$. Note that, moreover, 
\[
\widetilde{C}(f) := C(f)^{2(d+1)} \sum_{D \in \Deck} \|D\|^{-2(d+1)} 
\]
is finite.
By Cauchy-Schwarz, for any $x \in \cF$, we have
\[
\begin{split}
|\pi_\omega(f)(x)|^2 &= \left\lvert \sum_{D \in \Deck} E_\omega(D) \cdot f\circ D(x) \cdot \frac{\|D\|^{d+1}}{\|D\|^{d+1}}\right\rvert^2 \\
&\leq \left(\sum_{D \in \Deck}\|D\|^{-2(d+1)} \right)\left(\sum_{D \in \Deck} |f|^2 \circ D(x) \cdot \|D\|^{2(d+1)}\right)\\
&\leq \widetilde{C}(f) \sum_{D \in \Deck} |f|^2 \circ D(x).
\end{split}
\]
Thus, since the orbit of $\cF$ under $\Deck$ tessellates $\tM$, we have
\[
\int_\cF |\pi_\omega(f)|^2 \diff \vol \leq \widetilde{C}(f) \sum_{D \in \Deck} \int_\cF |f|^2 \circ D \, \diff \vol = \widetilde{C}(f) \int_\tM |f|^2 \diff \vol,
\]
which is finite by assumption. 

Let us show that $\pi_\omega(f) \circ D_0^{-1} = E_\omega(D_0) \cdot \pi_\omega(f)$.
For every $D_0 = [\gamma_0] + \Gamma^{\abel,G} \in \Deck$, we have $E_\omega(D_0^{-1}) = E_\omega( -[\gamma_0] + \Gamma^{\abel, G}) = E_{-\omega}(D_0) = E_\omega(D_0)^{-1}$, so that 
\[
\begin{split}
\pi_\omega(f) \circ D_0^{-1} &= \sum_{D \in \Deck} E_\omega(D) \cdot f\circ (D_0^{-1} \cdot D ) \\
&= \sum_{D \in \Deck} E_\omega(D_0^{-1} \cdot D) \cdot  E_\omega(D_0) \cdot f\circ (D_0^{-1} \cdot D ) =E_\omega(D_0) \cdot \pi_\omega(f).
\end{split}
\]
Finally, for the last claim, we note that
\[
\int_{\T^d} E_\omega(D) \diff \omega = \int_{\T^d} e^{2 \pi \imath \omega \cdot [\gamma]} \diff \omega = 0 \qquad \text{if and only if} \qquad D = [\gamma] + \Gamma^{\abel, G} \neq 0,
\]
and is equal to 1 otherwise. Therefore,
\[
\int_{\T^d} \pi_\omega(f)(x) \diff \omega = \sum_{D \in \Deck}  f\circ D(x) \int_{\T^d} E_\omega(D) \diff \omega = f(x),
\]
which completes the proof.
\end{proof}

Henceforth, we will simply write $f_\omega$ in place of $\pi_\omega(f)$. 
We note the following fact: let 
\[
\int^{\oplus}_{\T^d} L^2(M, \omega) \diff \omega
\]
be the direct integral of the Hilbert spaces $L^2(M, \omega)$. Then, by \Cref{lem:proj_cl}, the map 
\[
\Pi \colon \mathscr{C}^0_c(\tM) \to \int^{\oplus}_{\T^d}L^2(M, \omega) \diff \omega \qquad \text{given by} \qquad \Pi(f) = ( f_\omega )_{\omega \in \T^d}
\]
is well defined. It is possible to prove that $\Pi$ extends to a unitary equivalence $\Pi\colon L^2(\tM) \to \int^{\oplus}_{\T^d} L^2(M, \omega) \diff \omega$; we omit the proof of this since we will not use it in the paper.

%\begin{lemma}
%The map $\Pi$ extends to a linear isometry 
%\[
%\Pi\colon L^2(\tM) \to \int^{\oplus}_{\T^d} L^2(M, \omega) \diff \omega.
%\]
%\end{lemma}
%\begin{proof}
%Let us verify that $\Pi$ is an isometry with respect to the $L^2$-norm. We have 
%\[
%\begin{split}
%\|\Pi(f)\|^2_2 &= \int_{\T^d} \|f_\omega\|^2_2 \diff \omega = \int_{\T^d} \int_{\cF} |f_\omega|^2 \diff \vol \diff \omega \\
%&= \int_{\cF} \int_{\T^d} \Bigg\lvert \sum_{D \in \Deck} E_\omega(D) \cdot f \circ D \Bigg\rvert^2 \diff \vol \diff \omega \\
%&= \int_{\cF} \int_{\T^d} \Bigg\lvert \sum_{D = \gamma \Gamma\in \Deck } e^{2 \pi \imath \omega \cdot \gamma} \cdot f \circ D \Bigg\rvert^2 \diff \vol \diff \omega = \int_{\cF} \sum_{D \in \Deck}|f|^2 \circ D \diff \vol  \\
%&=\sum_{D \in \Deck}  \int_{D(\cF)} |f|^2 \diff \vol = \int_{\tM}|f|^2 \diff \vol = \|f\|^2_2,
%\end{split}
%\]
%where, in the third line, we used Parseval's Theorem for $\Deck \simeq G/\Gamma \simeq \Z^d$.
%\end{proof}

\subsection{Unitary equivalence of twisted spaces}\label{sec:xi_and_x0}

We now verify that the spaces $L^2(M, \omega)$ are all unitarily equivalent.
Let $\omega \in \cH$. Note that the pullback $p_0^{\ast}\omega$ of $\omega$ on $\tS$ is an exact 1-form.
By a slight abuse of notation, we write $p^{\ast}\omega$ to denote its pullback to $\tM$ under the canonical projection $\tM = T^1\tS \to \tS$.

Fix $x_0 \in \cF$. For any $x \in \tM$, the integral 
\begin{equation}\label{eq:roof}
\xi_\omega(x):= \int_{x_0}^x p^\ast \omega
\end{equation}
is well defined, since it does not depend on the choice of path connecting $x_0$ to $x$. 
For any measurable function $f$ on $\tM$, we define 
\[
\Xi_\omega(f) = f \cdot e^{2 \pi \imath \xi_\omega}.
\]

\begin{lemma}\label{lem:proj_Xi}
For every $\omega \in \cH$, we have
\[
\pi_\omega = \Xi_{-\omega} \circ \pi_0 \circ \Xi_{\omega}.
\]
\end{lemma}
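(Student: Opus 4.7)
The plan is to compute both sides of the asserted identity directly and reduce the equality to a statement about the cohomological pairing between $\omega$ and the homology class associated with each deck transformation.

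First I would unravel the right-hand side: for $x \in \tM$ and $f \in \mathscr{C}^0_c(\tM)$,
\[
\bigl(\Xi_{-\omega}\circ \pi_0 \circ \Xi_\omega\bigr)(f)(x) \;=\; e^{-2\pi \imath \,\xi_\omega(x)} \sum_{D\in\Deck} f(Dx)\, e^{2\pi \imath \,\xi_\omega(Dx)} \;=\; \sum_{D\in \Deck} e^{2\pi \imath (\xi_\omega(Dx)-\xi_\omega(x))} \, f(Dx).
\]
Comparing with $\pi_\omega(f)(x)=\sum_D E_\omega(D)\,f(Dx)=\sum_D e^{2\pi\imath\,\omega\cdot[\gamma_D]} f(Dx)$, where $D=[\gamma_D]+\Gamma^{\abel}$, the identity reduces to proving
\begin{equation}\label{eq:toprove}
\xi_\omega(Dx)-\xi_\omega(x) \;\equiv\; \omega\cdot[\gamma_D] \pmod{\Z}, \qquad \text{for every } D\in\Deck,\ x\in\tM.
\end{equation}

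Next I would rewrite the left-hand side of \eqref{eq:toprove} as the line integral $\int_x^{Dx} p^{\ast}\omega$, exploiting that $p^\ast\omega$ is exact on $\tM$, so that $\xi_\omega$ defined in \eqref{eq:roof} satisfies the additivity relation $\xi_\omega(y)-\xi_\omega(x)=\int_x^y p^\ast \omega$ along any path. Choosing any smooth curve $c\colon [0,1]\to\tM$ from $x$ to $Dx$, its projection $p\circ c$ is a loop in $M$, and standard covering space theory identifies the free-homotopy class of $p\circ c$ with the conjugacy class in $\pi_1(S)$ determined by the deck transformation $D$; passing to abelianization, $[p\circ c]\in H_1(M,\Z)\cong H_1(S,\Z)$ equals $[\gamma_D]$ modulo $\Gamma^{\abel}$. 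Since $\omega$ is closed and vanishes on $\Gamma^{\abel}$ by construction, the de~Rham pairing $\int_{p\circ c}\omega$ depends only on the class $[\gamma_D]+\Gamma^{\abel}=D$ and equals $\omega\cdot[\gamma_D]$. By a change of variables,
\[
\int_x^{Dx} p^\ast\omega \;=\; \int_{p\circ c} \omega \;=\; \omega\cdot[\gamma_D],
\]
which yields \eqref{eq:toprove} exactly (not merely modulo integers), and the identity follows.

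The only delicate point is the identification of the homology class of the projected loop $p\circ c$ with $[\gamma_D]$. This is a consequence of the correspondence between deck transformations of the Galois cover $p\colon\tM\to M$ and elements of $G/\Gamma=H_1(S,\Z)/\Gamma^{\abel}$ set up in \Cref{sec:abelian_covers}: by definition the basis element $D_i=\gamma_i\,\Gamma$ is the deck transformation whose action moves any lift along a lift of a representative loop of $[\gamma_i]$, and the claim extends by $\Z$-linearity to arbitrary $D\in\Deck$. Once this identification is in place, the remaining computation is formal.
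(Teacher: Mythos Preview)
Your proof is correct and follows essentially the same route as the paper's: both reduce the identity to the equivariance relation $e^{2\pi\imath\,\xi_\omega(Dx)} = E_\omega(D)\, e^{2\pi\imath\,\xi_\omega(x)}$, and both establish it by computing the line integral $\int_x^{Dx} p^\ast\omega$ as the pairing $\omega\cdot[\gamma_D]$. The paper organizes the computation slightly differently, splitting $\int_{x_0}^{Dx} = \int_{x_0}^{Dx_0} + \int_{Dx_0}^{Dx}$ and invoking $D^\ast p^\ast\omega = p^\ast\omega$ to identify the second piece with $\int_{x_0}^x p^\ast\omega$, but the substance is the same.
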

\begin{proof}
Note that $D^\ast p^\ast \omega = p^\ast \omega$ for every deck transformation $D$, since $p \circ D = p$. Thus, 
\[
\xi_\omega(D(x)) = \int_{x_0}^{D(x)} p^\ast \omega = \int_{x_0}^{D(x_0)} p^\ast \omega + \int_{D(x_0)}^{D(x)} p^\ast \omega = \int_{x_0}^{D(x_0)} p^\ast \omega + \int_{x_0}^{x} p^\ast \omega.
\]
Recalling the definition of $E_\omega$, we obtain
\begin{equation}\label{eq:xi_and_D}
e^{2\pi \imath \xi_\omega(D(x))} = E_\omega(D) \cdot e^{2\pi \imath \xi_\omega(x)}. 
\end{equation}
From this, we conclude
\[
\pi_0 \circ \Xi_{\omega}(f) = \sum_{D\in \Deck} ( f \cdot e^{2 \pi \imath \xi_\omega}) \circ D =  \sum_{D\in \Deck} f \circ D \cdot E_\omega(D) \cdot e^{2\pi \imath \xi_\omega}= \Xi_{\omega}\circ \pi_{\omega}(f),
\]
which proves the result.
\end{proof}

\begin{lemma}\label{lem:isom_L2omega}
Let $\omega \in \cH$. For every $\eta \in \cH$, the map $\Xi_{\omega}$ is a unitary operator
\[
\Xi_{\omega} \colon  L^2(M,\eta + \omega) \to  L^2(M,\eta).
\]
Moreover, $\Xi_\omega$ is a linear isomorphism between $\mathscr{C}^\ell(M,\eta + \omega)$ and $\mathscr{C}^\ell(M,\eta)$ for every $\ell \geq 0$.
\end{lemma}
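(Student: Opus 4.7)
The plan is to verify each of the three ingredients in the statement: (a) $\Xi_\omega$ preserves the twisted equivariance, (b) it is an isometry, (c) it is invertible and preserves regularity. The key computation is the transformation law \eqref{eq:xi_and_D} established in the previous lemma, which is exactly what is needed to track how $e^{2\pi i \xi_\omega}$ interacts with deck transformations.

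First I would observe that since $\omega \in \cH$ is a real harmonic $1$-form, its pullback $p^\ast \omega$ is a real $1$-form on $\tM$, so $\xi_\omega(x) = \int_{x_0}^x p^\ast \omega$ is real-valued and $|e^{2\pi i \xi_\omega(x)}| = 1$ for every $x \in \tM$. In particular, $|\Xi_\omega(f)| = |f|$ pointwise, and since $|f|$ is $\Deck$-invariant whenever $f \in L^2(M, \eta + \omega)$, we obtain
\[
\int_\cF |\Xi_\omega(f)|^2 \diff \vol = \int_\cF |f|^2 \diff \vol,
\]
so $\Xi_\omega$ is an isometry at the level of square integrability.

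Next, given $f$ satisfying $f \circ D^{-1} = E_{\eta+\omega}(D)\, f$ for all $D \in \Deck$, I would use \eqref{eq:xi_and_D} in the form $e^{2\pi i \xi_\omega(D^{-1}(x))} = E_\omega(D)^{-1}\, e^{2\pi i \xi_\omega(x)}$, together with the multiplicativity $E_{\eta+\omega}(D) = E_\eta(D) E_\omega(D)$, to compute
\[
\Xi_\omega(f) \circ D^{-1}(x) = f(D^{-1}(x))\, e^{2\pi i \xi_\omega(D^{-1}(x))} = E_{\eta+\omega}(D) E_\omega(D)^{-1}\, f(x)\, e^{2\pi i \xi_\omega(x)} = E_\eta(D)\, \Xi_\omega(f)(x).
\]
Combined with the norm identity, this shows $\Xi_\omega$ sends $L^2(M, \eta + \omega)$ into $L^2(M, \eta)$. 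The same argument applied with $-\omega$ in place of $\omega$ produces a map $\Xi_{-\omega}\colon L^2(M,\eta) \to L^2(M, \eta + \omega)$, and since $\xi_\omega + \xi_{-\omega} \equiv 0$ by linearity of the integral, $\Xi_\omega \circ \Xi_{-\omega} = \Xi_{-\omega} \circ \Xi_\omega = \mathrm{id}$. This identifies the inverse and completes the proof of the unitary equivalence.

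For the regularity statement, I would simply note that $\xi_\omega$ is $\mathscr{C}^\infty$ (it is the integral of a smooth $1$-form along a path in a smooth manifold, hence smooth in the endpoint), so $e^{2\pi i \xi_\omega}$ is a smooth non-vanishing function on $\tM$. Multiplication by a smooth non-vanishing function preserves $\mathscr{C}^\ell$-regularity and admits a smooth inverse (multiplication by $e^{-2\pi i \xi_\omega}$); combined with the equivariance computation above, this yields the isomorphism $\mathscr{C}^\ell(M, \eta+\omega) \cong \mathscr{C}^\ell(M, \eta)$. There is no substantial obstacle here: the whole argument is a bookkeeping exercise once \eqref{eq:xi_and_D} is in hand, and the only subtlety worth flagging is the reality of $\xi_\omega$, which is what makes $\Xi_\omega$ an isometry rather than merely a bounded invertible operator.
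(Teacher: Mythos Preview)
Your proof is correct and follows essentially the same approach as the paper's: both use $\Xi_{-\omega}$ as the inverse, verify the isometry property (you via $|e^{2\pi i \xi_\omega}|=1$ pointwise, the paper via the inner product identity), and handle regularity by noting that multiplication by the smooth function $e^{2\pi i \xi_\omega}$ preserves $\mathscr{C}^\ell$. If anything, your version is slightly more complete, as you explicitly verify the equivariance $\Xi_\omega(f)\circ D^{-1}=E_\eta(D)\,\Xi_\omega(f)$ using \eqref{eq:xi_and_D}, which the paper leaves implicit.
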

\begin{proof}
Since $\Xi_\omega^{-1} = \Xi_{-\omega}$, the map $\Xi_\omega$ is a linear bijection. Furthermore, 
\[
\langle \Xi_\omega(f), \Xi_\omega(g)\rangle = \int_\cF f \, e^{2 \pi \imath \xi_\omega} \cdot \overline{g \, e^{2 \pi \imath \xi_\omega}} \diff \vol = \int_\cF f  \cdot \overline{g} \diff \vol = \langle f, g\rangle,
\]
which proves the first claim.

We now prove the second claim for $\ell = 1$, the general case is left to the reader. Fix a unit norm vector field $W$ on $\tM$ and a point $x$. Then,
\begin{equation}\label{eq:vect-field}
\begin{split}
|W(\Xi_\omega(f))| &= |W(f \cdot e^{2\pi \imath \xi_\omega})(x)| = |Wf(x)| + |f(x)| \cdot 2\pi |W\xi_\omega(x)| \\
&\leq \|Wf\|_\infty + \|f\|_\infty \cdot 2 \pi |\omega_x(W)| \leq \|f\|_{\mathscr{C}^1} \cdot (1+\|\omega\|_\infty).
\end{split}
\end{equation}
This completes the proof.
\end{proof}

%%%%%%%%%%%%%%%%%%%%%%%%%%%%%%%%%%%%%%%%%%
%%%%%%%%%%%%%%%%%%%%%%%%%%%%%%%%%%%%%%%%%%

\section{Twisted transfer operators}\label{sec:twisted_transfer_operators}

Fix $r\in(2,3)$. 
For each $t \in \R^+$, we define the transfer operator $\cL_t: \cC^{r-1}(M)\to \cC^{r-1}(M)$ by
\[
\cL_{t}f(x)= J_{-t}(x)\cdot f\circ \geo_{-t}(x), \qquad x\in M, \quad f\in \cC^{r-1}(M),
\]
where $J_{-t}$ is the function given in \eqref{eq:potential}.
 The main technical tool of the paper is a twisted transfer operators which correspond to perturbations of the operator $\cL_t$. The idea is to translate the action of $\cL_t$ on the spaces $\mathscr{C}^\ell(M,\omega)$ into the action of a family of twisted operators $\cL^{(\omega)}_t$, defined below, acting on the same space $\mathscr{C}^\ell(M,0)=\mathscr{C}^\ell(M)$, and then extend it to a suitable Banach space on which $\cL_t^{(\omega)}$ has good spectral properties (see the next section).
 For each $\omega \in \T^d$, we then consider the operator
\begin{equation}\label{eq:TTO}
\cL^{(\omega)}_t=\Xi_{\omega} \circ  \cL_t \circ \Xi_{-\omega}.
\end{equation}
\begin{remark}[Remark on the constants]\label{rmk:constant} To enhance readability, we will use the symbol $\sC$ to represent a general positive constant. This constant may rely on various objects such as $X, V, U$, among others, which depend on the geometry of the system, but notably does not vary with time $t$. Additionally, given our focus on perturbations of the operator $\cL_t$ with respect to $\omega$, we will keep track of quantities depending, among other things, on the $\cC^r$ norm of $\omega$, and use the subscript $\omega$ (e.g. $C_{\omega}$). The values of constants $\sC$ and $C_\omega$ may vary between occurrences, even within the same line.
\end{remark}

\begin{lemma}\label{lem:invar}
For each $\omega \in \T^d$ and each $t \in \R^+$, $\cL_{t}^{(\omega)}$ is a well-defined bounded linear operator on $\cC^{r-1}(M,0)$.
\end{lemma}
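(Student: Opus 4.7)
The plan is to reduce the claim to the corresponding statement on the twisted space $\cC^{r-1}(M, -\omega)$. By Lemma 4.3, the multiplication operator $\Xi_{-\omega}$ is a bounded linear isomorphism from $\cC^{r-1}(M, 0)$ onto $\cC^{r-1}(M, -\omega)$, with bounded inverse $\Xi_\omega$; the operator norms are finite but depend on $\|\omega\|$, as is already visible in the estimate \eqref{eq:vect-field}. Since $\cL_t^{(\omega)} = \Xi_\omega \circ \cL_t \circ \Xi_{-\omega}$ by definition, it is then enough to verify that $\cL_t$ sends $\cC^{r-1}(M, -\omega)$ into itself as a bounded operator.

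For $f \in \cC^{r-1}(M, -\omega)$, three properties of $\cL_t f$ must be checked: the correct $\Deck$-equivariance, $\cC^{r-1}$ regularity on $\tM$, and a norm bound. For the equivariance, I combine Corollary \ref{cor:Jt_invariant} (which yields $J_{-t} \circ D = J_{-t}$ on $\tM$) with Lemma \ref{lem:flows_commute} ($\geo_{-t}$ commutes with every deck transformation) to compute directly that $(\cL_t f) \circ D^{-1}(x) = J_{-t}(x) \, (f \circ D^{-1})(\geo_{-t}(x)) = E_{-\omega}(D) \, \cL_t f(x)$, which is precisely the required equivariance. Regularity follows because $r - 1 < 2$ and, by the Hurder–Katok result recalled in Section \ref{sec:geo_horo_basics}, the stable distribution $E_-$ is of class $\cC^{2-\varepsilon}$, so $J_{-t} \in \cC^{r-1}(\tM)$ for each fixed $t$; composition with the smooth diffeomorphism $\geo_{-t}$ preserves $\cC^{r-1}$.

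The norm bound splits into the $L^2$ and Hölder parts. For the former, $\Deck$-invariance of $|f|$ (inherited from $|f \circ D^{-1}| = |E_{-\omega}(D)| |f| = |f|$) lets me rewrite the integral over the fundamental domain as one over $M$, and the volume-preserving property of $\geo_{-t}$ yields $\|\cL_t f\|_{L^2(\cF)} \le \|J_{-t}\|_\infty \|f\|_{L^2(\cF)}$. For the Hölder seminorm, the Leibniz rule together with standard composition estimates with a smooth diffeomorphism produces a bound with a constant depending on $\|J_{-t}\|_{\cC^{r-1}}$ and on the $\cC^{r-1}$ norm of $\geo_{-t}$, which are finite for each fixed $t$. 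The only subtlety is bookkeeping — no individual step is a genuine obstacle, since at this stage the constant $C_{\omega, t}$ is allowed to depend on $t$ and on $\omega$; uniform estimates in either parameter are not yet at stake and are the content of the subsequent lemmas.
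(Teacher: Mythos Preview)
Your approach matches the paper's: verify that the untwisted $\cL_t$ preserves the twisted space (using $\Deck$-invariance of $J_{-t}$ from Corollary~\ref{cor:Jt_invariant} and commutation of $\geo_{-t}$ with deck transformations from Lemma~\ref{lem:flows_commute}), then conjugate via $\Xi_{\pm\omega}$ using Lemma~\ref{lem:isom_L2omega}. Note one sign slip: Lemma~\ref{lem:isom_L2omega} gives $\Xi_{-\omega}\colon \cC^{r-1}(M,0)\to\cC^{r-1}(M,\omega)$ rather than $\cC^{r-1}(M,-\omega)$, but your equivariance computation works verbatim for any twist parameter, so the argument is unaffected; the paper also records along the way the explicit formula $\cL_t^{(\omega)} f=\cL_t(e^{2\pi\imath F_{t,\omega}}f)$, which is used in what follows.
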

\begin{proof}
By \Cref{lem:renormalization}, the function $J_t$ has the same regularity as $E_{-}$, in particular $J_t$ is of class $\mathscr{C}^{r-1}$. 
Moreover, since $J(-t,x)=\partial_s |_{s=0}\tau(s,t,x)$, \Cref{cor:Jt_invariant} implies that $J_{-t}\circ D^{-1}=J_{-t}$ for each $D\in \Deck$, i.e. the function $J_{-t} \in \cC^{r-1}(M,0)$.

Take $f \in \cC^{r-1}(M,0)$ and $x\in M$. Note that $\cL_t (\Xi_{-\omega}f) \in \cC^{r-1}(M, \omega)$. Indeed,
\[
\cL_t (\Xi_{-\omega}f)(x)= \cL_t (e^{-2\pi\imath \xi_\omega}f)(x)=J_{-t}(x) e^{-2\pi\imath \xi_\omega(\geo_{-t}(x))}f(\geo_{-t}(x)),
\]
so that, by \Cref{lem:flows_commute} and the invariance of $J_{-t}$ by $\Deck$, we obtain
\[
\cL_t (\Xi_{-\omega}f)(D^{-1}(x)) = J_{-t}(x) e^{-2\pi\imath \xi_\omega(D^{-1}(\geo_{-t}(x)))}f(\geo_{-t}(x)) = E_\omega(D) \cL_t (\Xi_{-\omega}f)(x),
\]
where we also used \eqref{eq:xi_and_D}. 
Therefore, by \Cref{lem:isom_L2omega}, we conclude $\cL^{(\omega)}_tf \in \cC^{r-1}(M,0)$.  

Explicitly, we have
\[
\begin{split}
\Xi_{\omega}( \cL_t (\Xi_{-\omega}f(x)))&=\Xi_{\omega}( J_{-t}(x)e^{-2\pi\imath \xi_\omega(\geo_{-t}(x)) }f(\geo_{-t}(x)))\\
&= J_{-t}(x)e^{2\pi \imath (\xi_\omega(x) - \xi_\omega(\geo_{-t}(x)) )}f(\geo_{-t}(x)).
\end{split}
\]
Finally, since $|e^{2\pi \imath (\xi_\omega(x)-\xi_\omega(\geo_{-t}(x)))}|\le 1$ for each $x$ and $\omega$, we have 
\[
\|\cL^{(\omega)}_tf\|_{L^2}\le \|\cL_t f\|_{L^2}<\infty. \qedhere
\]
\end{proof}

Next, it is convenient to introduce the function 
\begin{equation}\label{eq:F}
F_{t,\omega}(x)= \xi_\omega(\geo_{t}(x))- \xi_\omega(x) = \int_x^{\geo_{t}(x)} p^{\ast}\omega = \int_0^t \langle \omega, X \rangle\circ \geo_{s}(x) \diff s,
\end{equation}
where we used the fact that $p^\ast \omega$ is exact on $\tM$.
The function $F_{t,\omega}$ defined in \eqref{eq:F} is an equivalent formulation of the {\it Frobenius function} for \cite{AvDoDu} and the {\it geodesic winding cycle} of \cite{KatSun} or \cite{GuLe}.
The proof of \Cref{lem:invar} then shows that
\begin{equation}\label{eq:twisted}
\cL^{(\omega)}_t f= \cL_t(e^{2\pi \imath F_{t,\omega}}f), \qquad \text{for any} \qquad f\in \cC^{r-1}(M,0).
\end{equation}

%%%%%%anisotropic spaces%%%%%%%%%%%%

\subsection{Anisotropic Banach spaces} It is now well-established (see e.g. the pioneering work \cite{BlKeLi}) that $\cC^{r-1}(M)$ isn't suitable for analyzing the spectral properties of operators resembling $ \cL_t$. Considering the presence of the stable direction, we need to identify a Banach space embedded within the space of distributions. Furthermore, complications arise due to the flow direction, introducing partial hyperbolicity and further intricacies into the analysis. One approach to address this challenge is based on \cite{Liv} and involves investigating the spectrum of the semigroup generator. Several possibilities exist for the "appropriate" anisotropic Banach space, depending on the specific characteristics of the system. However, for our present purpose, the following simplified version of spaces and norms proposed in \cite{GoLi} will suffice\footnote{Note, however, that a more meticulous selection of the norm could allow us to shrink the essential spectrum as much as we want with much better outcomes in terms of optimality of the so called {\textit{correlation spectra} (see e.g. \cite{BuLi})}. However, this is not always possible and it depends on the dynamics, as shown in \cite{BuCaCa, BuCaJa, BaCa}.}.\\

Let us take $\rho>0$ to be chosen arbitrarily but fixed, and let $\cI_\rho$ be the set of segments of length $\rho$ in the direction of the vector field $U$. Let $p\in \N \cup \{0\}$ and $q\in \R^+$. We denote by $\cC^{q}_c(I)$ the set of complex-valued functions with compact support on $I \subset \cI_\rho$, which are $\lfloor q\rfloor$-times continuously differentiable and whose $\lfloor q\rfloor$-th derivative is H\"older continuous of exponent $q- \lfloor q\rfloor$, if $q$ is not an integer.
We endow $\cC^{q}$ with a norm $\|\cdot \|_{\cC^q}$ such that it is a Banach algebra, namely $\|fg\|_{\cC^q}\le \|f\|_{\cC^q}\|g\|_{\cC^q}$.

We will take the parameters such that $p \le 1$ and $q \le 1+\alpha$, where $\alpha \in(1,0)$ is arbitrary but fixed. In particular, $p+q \le r$. Finally, given a vector field $v$ and a function $f$, as before we denote by $v f$ the Lie derivative of $f$ along $v$ and, for $j\in \N\cup \{0\}$, by $v^j=\prod_{k=0}^j v$ the composition of vector fields and $v^0=\operatorname{Id}$. 
We can now define our norms and spaces: for $f\in \cC^\infty(M), p \le 1$ and $0 < q \leq 1+\alpha$, we set
\begin{equation}\label{eq:norm}
\|f\|_{p,q}=\sup_{j\le p}\sup_{I\in \cI_\rho} \sup_{v\in \{X,V\}}\sup_{\substack{\varphi \in \cC_c^{p+q}(I) \\ \|\varphi\|_{\cC^{p+q}}\le 1}}\left|\int \varphi \cdot   v^j f  \diff U\right|,
\end{equation}
where the integration with respect to $U$ means $
\int_{I_x} f \diff U= \int_{0}^\rho f\circ \horo_s(x) \diff s,
$ for any horocycle arc $I_x$ starting at $x$.
The Banach spaces we will be working with are then defined as $\cB_{p,q}= \overline{\cC^{\infty}(M, \C)}^{\|\cdot\|_{p,q}}$.
The following useful results can be inherited from \cite[Lemma 2.1, Proposition 4.1]{GoLi}.
\begin{lemma}\label{lem:useful}
If $p+q \le r$, the unit ball of $\cB_{p,q}$ is relatively compact in $\cB_{p-1,q+1}$. Moreover, $\cC^r$ is continuously embedded into $\cB_{p,q}$ as a dense subset and, letting $\cD_r$ be the space of distributions of order $r$, the embedding $\mathscr E: \cC^r\to \cD_r$ given by $\langle \mathscr E f, g \rangle=\int fg \diff \vol $ extends to a continuous injection from $\cB_{p,q}$ to $\cD_q$.
\end{lemma}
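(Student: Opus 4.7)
The plan is to establish the three claims separately. The first (dense continuous embedding) is formal; the second (compactness) relies on a mollification argument; the third (distributional injection) combines a partition-of-unity estimate with an integration-by-parts step. For the first, given $f\in\cC^r$, $j\le p\le 1$, $v\in\{X,V\}$ and $\varphi\in\cC^{p+q}_c(I)$ with $\|\varphi\|_{\cC^{p+q}}\le 1$ (so $\|\varphi\|_\infty\le 1$), one bounds
\[
\left|\int_I\varphi\cdot v^jf\diff U\right|\le \rho\,\|v^jf\|_\infty\le \rho\,\|f\|_{\cC^r},
\]
yielding $\|f\|_{p,q}\le\rho\|f\|_{\cC^r}$; density is automatic from $\cC^\infty\subset\cC^r\subset\cB_{p,q}=\overline{\cC^\infty}^{\|\cdot\|_{p,q}}$.

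For the compact embedding, I would fix a smoothing family $\Phi_\epsilon\colon\cB_{p,q}\to \cC^\infty(M)$, built by convolution with an approximate identity in local foliated charts adapted to the splitting $E_-\oplus E_0\oplus E_+$. Two estimates do the job: $\|\Phi_\epsilon f\|_{\cC^k}\le C_{k,\epsilon}\|f\|_{p,q}$ for each $k\in\N$ (routine, obtained by commuting $\Phi_\epsilon$ with derivatives), and $\|f-\Phi_\epsilon f\|_{p-1,q+1}\le C\epsilon^\gamma\|f\|_{p,q}$ for some $\gamma>0$ (the delicate point). The latter exploits the identity $(p-1)+(q+1)=p+q$: the loss of one transverse derivative is compensated by Taylor-expanding the smoother test function against its extra H\"older regularity, extracting the factor $\epsilon^\gamma$. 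Combined with Arzel\`a--Ascoli applied to $\{\Phi_\epsilon f_n\}$ at each $\epsilon$ and a standard diagonal extraction along $\epsilon_k\to 0$, this produces a Cauchy subsequence of $\{f_n\}$ in $\cB_{p-1,q+1}$.

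For the distributional injection, the continuity bound $|\int_M fg\diff\vol|\le C\|f\|_{p,q}\|g\|_{\cC^{p+q}}$ for smooth $f,g$ follows from a finite partition of unity $\{\chi_i\}$ subordinate to foliated boxes of $U$-length $\rho$, together with the disintegration $\diff\vol=\diff U\,\diff\mu_{tr}$:
\[
\int_M fg\diff\vol=\sum_i\int_{T_i}\left(\int_{I_t}(g\chi_i)f\diff U\right)\diff\mu_{tr}(t),
\]
where each inner leaf integral is bounded by $\|f\|_{0,p+q}\,\|(g\chi_i)|_{I_t}\|_{\cC^{p+q}}\le C\|f\|_{p,q}\|g\|_{\cC^{p+q}}$. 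Density then extends $\mathscr E$ continuously to a map $\cB_{p,q}\to \cD_q$, the precise distributional order being dictated by the constraint $p+q\le r$. For injectivity, given $f\in\cB_{p,q}$ with $\mathscr E f=0$ and approximants $f_n\in\cC^\infty$ converging to $f$ in the norm, I would smear any admissible test datum $(I,v,j,\varphi)$ by a transverse mollifier $\eta_\delta$ and integrate by parts along $v\in\{X,V\}$---using the smooth divergence of \eqref{eq:div} in the case $v=V$---rewriting $\int_I\varphi\cdot v^j f_n\diff U$ as $\mathscr E(f_n)$ evaluated on a $\cC^{p+q}$ test function, up to a correction vanishing with $\delta$. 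Passing to the limit first in $n$ (using $\mathscr E f=0$) and then in $\delta$ forces $\int_I\varphi\cdot v^j f\diff U=0$ in the sense of the extended functional, and taking the supremum over test data gives $\|f\|_{p,q}=0$.

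The main obstacle I anticipate is the quantitative bound $\|f-\Phi_\epsilon f\|_{p-1,q+1}\le C\epsilon^\gamma\|f\|_{p,q}$ in the compactness step: the mollifier must be compatible with the only $\cC^{2-\varepsilon}$ invariant foliations $W^\pm$, and the constraint $p+q\le r$ is exactly what allows commuting $\Phi_\epsilon$ with Lie derivatives along $X,V$ without exceeding the finite smoothness of the stable and unstable distributions.
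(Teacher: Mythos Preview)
The paper does not give its own proof of this lemma: it simply records that the three assertions ``can be inherited from \cite[Lemma 2.1, Proposition 4.1]{GoLi}'' and moves on. Your proposal is therefore not competing against an argument in the paper but rather sketching what the cited Gou\"ezel--Liverani proofs look like when specialized to the present one-dimensional setting. In that respect your outline is faithful: the compactness via mollification plus Arzel\`a--Ascoli, the partition-of-unity disintegration for the distributional bound, and the transverse-thickening plus integration-by-parts trick for injectivity are exactly the mechanisms used in \cite{GoLi}.

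Two small points are worth tightening. First, your density argument is not quite complete: from $\|f\|_{p,q}\le\rho\|f\|_{\cC^r}$ you conclude $\cC^r\subset\cB_{p,q}$, but since $\cB_{p,q}$ is \emph{defined} as the $\|\cdot\|_{p,q}$-closure of $\cC^\infty$, finiteness of the norm on $\cC^r$ is not enough---you must show every $f\in\cC^r$ is a $\|\cdot\|_{p,q}$-limit of smooth functions. This follows from the same mollification you use later (since $p\le 1<r$, the relevant derivatives $v^jf$ are themselves H\"older, and $v^j f_\epsilon\to v^jf$ uniformly), but it should be said. Second, your continuity estimate $|\int fg\,\diff\vol|\le C\|f\|_{p,q}\|g\|_{\cC^{p+q}}$ lands the image in $\cD_{p+q}$, not in the smaller space $\cD_q$ claimed in the statement; with the norm as written in this paper (test functions in $\cC^{p+q}$ for every $j$) that is in fact the natural target, and the $\cD_q$ in the lemma is carried over from the Gou\"ezel--Liverani convention where the test-function regularity is $\cC^{j+q}$. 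This is a cosmetic mismatch in the statement rather than a flaw in your argument.
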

As we fix the parameters $p,q,\alpha$, we will use the symbols $\cB$ and $\cB_w$ to denote $\cB_{1,\alpha}$ (the strong space) and  $\cB_{0,1+\alpha}$ (the weak space) respectively, with $\|\cdot\|_{\cB}$ and $ \|\cdot\|_{\cB_w}$ the relative norms, when it does not create any confusion.

The proof of \Cref{lem:useful} uses the following fact, that is an easy consequence of the definition of the norms.
\begin{lemma}\label{lem:integral_is_cont_funct}
	Let $\psi \colon [0,1] \to \C$ be of class $\mathscr{C}^{1+\alpha}$ with compact support in $(0,1)$. Then, for any fixed $x \in M$, the functional 
	\[
	\mathcal{I}[x,\psi] \colon f \mapsto \int_0^1 f \circ \horo_s(x) \cdot \psi(s) \diff  s,
	\]
	\sloppy defined for $f \in \mathscr{C}^r(M)$, extends to a continuous linear functional on $\cB$ and on $\cB_w$ with norms $\| \mathcal{I}[x,\psi] \|_{\cB \to \C} \leq \|\psi\|_{\mathscr{C}^{1}}$ and $\| \mathcal{I}[x,\psi] \|_{\cB_w \to \C} \leq \|\psi\|_{\mathscr{C}^{1+\alpha}}$ respectively.
\end{lemma}

%%%%%%%%%%%%weak LY for \cL^{(\omega)}%%%%%%%%%%

\subsection{Lasota-Yorke inequality for $\cL^{(\omega)}_t$} The following Lemma is the key to prove the spectral result for the resolvent given in the next section. We refer to Remark \ref{rmk:constant} to recall the use of the symbols $\sC$ and $C_{\omega}$. 

\begin{lemma}\label{lem:LY-L}
For each $t\in \R^+$, $\omega \in \T^d$ and $f\in \cC^r(M)$, we have 
\begin{equation}\label{eq:LY1-L}
\|\cL^{(\omega)}_t f\|_{0,1+\alpha} \le C_\omega e^{h_{\topp}t}\|f\|_{0,1+\alpha}
\end{equation}
and
\begin{equation}\label{eq:LY2-L}
\|\cL^{(\omega)}_t f\|_{1,\alpha} \le  C_\omega e^{h_{\topp}t}(e^{-\lambda t} \|f\|_{1,\alpha} +\|f\|_{0,1+\alpha}+\|Xf\|_{0,1+\alpha}),
\end{equation}
where $\lambda>0$ is given in \eqref{eq:hyperb}.
\end{lemma}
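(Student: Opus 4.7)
The plan is to reduce integrals against $\cL^{(\omega)}_t f$ to integrals against $f$ itself: for the weak bound \eqref{eq:LY1-L} via a change of variables dictated by the renormalization relation \eqref{eq:renormalization}, and for the strong bound \eqref{eq:LY2-L} via commutator identities for $X\cL^{(\omega)}_t$ and $V\cL^{(\omega)}_t$. The factor $e^{h_{\topp} t}$ will arise from the expansion of stable segments under $\geo_{-t}$ quantified by \Cref{lemma:Marcus}, while the contraction $e^{-\lambda t}$ will come from the unstable contraction of $\geo_{-t}$ together with the contact identity $J^u(-t,x) J_{-t}(x) = 1$.

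For \eqref{eq:LY1-L}, fix a stable segment $I$ parametrized by $s\mapsto \horo_s(x)$, $s \in [0,\rho]$, and a test function $\varphi \in \cC^{1+\alpha}_c(I)$ with $\|\varphi\|_{\cC^{1+\alpha}}\le 1$. Combining \eqref{eq:twisted} with the change of variable $\sigma = \tau(s,-t,x)$ (whose Jacobian is $J_{-t}(\horo_s(x))$ by \Cref{cor:inverse_of_Jt}) and the renormalization $\geo_{-t}(\horo_s(x)) = \horo_\sigma(\geo_{-t}(x))$, I obtain
\[
\int_I \varphi \cdot \cL^{(\omega)}_t f \,\diff U \;=\; \int_0^{\tau(\rho,-t,x)} \tilde\varphi(\sigma)\, f(\horo_\sigma(\geo_{-t}(x))) \,\diff\sigma,
\]
where $\tilde\varphi(\sigma) := \varphi(\horo_{s(\sigma)}(x))\,e^{2\pi\imath F_{t,\omega}(\horo_\sigma(\geo_{-t}(x)))}$. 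By \Cref{lemma:Marcus}, the new domain has length at most $C_\tau\rho\, e^{h_{\topp} t}$, so I partition it into $\sim e^{h_{\topp} t}$ sub-segments of length $\rho$. On each sub-segment the $\cC^{1+\alpha}$-norm of $\tilde\varphi$ is bounded uniformly in $t$ by some $C_\omega$: the composition factor $\varphi\circ\horo_{s(\sigma)}$ has small $\sigma$-derivatives since $\diff s/\diff\sigma = J_t(\cdot)\le 1$, and the phase is controlled by $U F_{t,\omega}(y) = \int_0^t J_r(y)\, U\langle\omega, X\rangle(\geo_r(y))\,\diff r$, which is uniformly bounded thanks to the integrability of $J_r$ from \eqref{eq:Jbound}. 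Summing the weak-norm estimate over the $\sim e^{h_{\topp} t}$ sub-segments yields \eqref{eq:LY1-L}.

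For \eqref{eq:LY2-L}, the case $j=0$ reduces to \eqref{eq:LY1-L}. In the case $j=1$, $v=X$, since $X$ commutes with $\geo_{-t}$, a direct Leibniz computation gives $X\log J_{-t}(x) = \Phi^-(\geo_{-t}(x))-\Phi^-(x)$ and $XF_{t,\omega}(x) = \langle\omega,X\rangle(\geo_t(x))-\langle\omega,X\rangle(x)$, both uniformly bounded in $\cC^{1+\alpha}$. Combining these with the pointwise identity $X\log J_{-t}\cdot \cL^{(\omega)}_t f = \cL^{(\omega)}_t(\Phi^- f) - \Phi^-\cL^{(\omega)}_t f$ (which follows by absorbing $\Phi^-(\geo_{-t}x)$ inside $\cL^{(\omega)}_t$), I arrive at
\[
X\cL^{(\omega)}_t f \;=\; \cL^{(\omega)}_t\bigl(Xf \,+\, (\Phi^- + 2\pi\imath\,XF_{t,\omega})\,f\bigr) \,-\, \Phi^-\cdot \cL^{(\omega)}_t f.
\]
Each summand is now in a form to which \eqref{eq:LY1-L} applies (after absorbing $\Phi^-$ into the test function for the last summand), producing the $(\|Xf\|_{0,1+\alpha}+\|f\|_{0,1+\alpha})$ contribution.

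The main obstacle is the case $j=1$, $v=V$. Using $V(g\circ\geo_{-t}) = J^u(-t)\cdot Vg\circ\geo_{-t}$ and the identity $\kappa_t(\geo_{-t}x) = J^u(-t,x)$ for $\kappa_t(y) := 1/J^u(t,y)$, I obtain
\[
V\cL^{(\omega)}_t f \;=\; V\log J_{-t}\cdot \cL^{(\omega)}_t f \,+\, 2\pi\imath\, \cL^{(\omega)}_t(\kappa_t\, VF_{t,\omega}\,f) \,+\, \cL^{(\omega)}_t(\kappa_t\, Vf).
\]
Since $\|\kappa_t\|_{\cC^{1+\alpha}}\le C e^{-\lambda t}$, the third summand yields $e^{-\lambda t}\|Vf\|_{0,1+\alpha}\le e^{-\lambda t}\|f\|_{1,\alpha}$ via \eqref{eq:LY1-L}. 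For $\kappa_t VF_{t,\omega}$, the cocycle relation $J^u(r,y)/J^u(t,y) = 1/J^u(t-r,\geo_r(y))$ produces $\kappa_t(y) VF_{t,\omega}(y) = \int_0^t J^u(t-r,\geo_r(y))^{-1}\, V\langle\omega,X\rangle(\geo_r(y))\,\diff r$, uniformly bounded in $\cC^{1+\alpha}$. The truly delicate term is $V\log J_{-t}\cdot \cL^{(\omega)}_t f$, because the $U$-derivative of $V\log J_{-t}$ generally grows linearly in $t$. To overcome this, I would write $V\log J_{-t}(x) = -H(x) + J^u(-t,x)\,H(\geo_{-t}(x))$, where
\[
H(x):=\int_0^\infty J^u(-s,x)\,V\Phi^-(\geo_{-s}(x))\,\diff s
\]
is uniformly bounded in $\cC^{1+\alpha}$ thanks to the exponential decay of $J^u(-s)$. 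Thus $V\log J_{-t}\cdot \cL^{(\omega)}_t f = -H\cdot \cL^{(\omega)}_t f + \cL^{(\omega)}_t(\kappa_t H f)$, and both summands are controlled by \eqref{eq:LY1-L}, completing the proof.
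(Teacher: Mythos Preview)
Your overall plan (change of variables for the weak norm, commutator identities for the strong norm) is the same as the paper's, and the treatment of \eqref{eq:LY1-L}, of $X\cL^{(\omega)}_t f$, and of the terms $\cL^{(\omega)}_t(\kappa_t Vf)$ and $\cL^{(\omega)}_t(\kappa_t\,VF_{t,\omega}\,f)$ is essentially correct. The argument breaks down, however, at the one step you yourself flag as delicate.

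The function $H(x)=\int_0^\infty J^u(-s,x)\,V\Phi^-(\geo_{-s}(x))\,\diff s$ is bounded (since $|J^u(-s,\cdot)|\le e^{-\sqrt{-\overline k}\,s}$), but it is \emph{not} $\cC^{1+\alpha}$ along stable segments in variable curvature. Differentiating under the integral sign in the $U$--direction produces, among other things, the term
\[
\int_0^\infty J^u(-s,x)\,J_{-s}(x)\,(UV\Phi^-)(\geo_{-s}(x))\,\diff s.
\]
Because the contact/Liouville form is preserved, the product $J^u(-s,x)\,J_{-s}(x)$ is bounded above and below by positive constants (it equals a ratio of frame volumes), but it does \emph{not} decay; along a generic orbit $(UV\Phi^-)(\geo_{-s}(x))$ does not decay either, so the integral diverges. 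Hence $H$ cannot be absorbed into a $\cC^{1+\alpha}$ test function, and neither $\int_I(\varphi H)\,\cL^{(\omega)}_t f\,\diff U$ nor $\|\kappa_t H f\|_{0,1+\alpha}$ can be controlled as you claim. (Note this is consistent with your own observation that $U\big(V\log J_{-t}\big)$ grows linearly in $t$: that growth is exactly the partial integral $\int_0^t$ of the divergent integrand above.)

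The paper sidesteps this obstruction by \emph{not} asking for $\cC^{1+\alpha}$ regularity of the multipliers at all. After the change of variables to the segments $I_j\subset\geo_{-t}(I)$, every geodesic composition appearing in the test function is of the form $\geo_{t-a}\circ\horo_\eta$ with $t-a\ge 0$, hence contracting in $\eta$; this gives a uniform $\cC^{\alpha}$ bound (only H\"older, not $\cC^{1+\alpha}$) on the new test functions. One first obtains
\[
\|\cL^{(\omega)}_t f\|_{1,\alpha}\le C_\omega e^{h_{\topp}t}\big(e^{-\sqrt{-\overline k}\,t}\|f\|_{1,\alpha}+\|f\|_{0,\alpha}+\|Xf\|_{0,\alpha}\big),
\]
with the weaker norm $\|\cdot\|_{0,\alpha}$ on the right. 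The passage from $\|\cdot\|_{0,\alpha}$ to $\|\cdot\|_{0,1+\alpha}$ is then done by a mollifier trick (replacing $\varphi$ by a regularized $\varphi_\epsilon$ with $\|\varphi-\varphi_\epsilon\|_{\cC^0}\lesssim\epsilon$ and $\|\varphi_\epsilon\|_{\cC^{1+\alpha}}\lesssim\epsilon^{-1}$), which costs an extra $\epsilon\|f\|_{1,\alpha}$; choosing $\epsilon=e^{-\sqrt{-\overline k}\,t}$ and iterating in $t$ removes the time--dependent constants and yields \eqref{eq:LY2-L}. Both the mollification and the final iteration are missing from your sketch and are essential.
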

\begin{proof} Recalling \eqref{eq:twisted} and \eqref{eq:F}, it is convenient to set 
\begin{equation}\label{eq:G}
G_{t,\omega}(x)=\exp\left( 2\pi \imath \int_0^t \langle \omega, X\rangle\circ \geo_{-a}\diff a  \right),
\end{equation}
so that 
\begin{equation}\label{eq:TO}
\cL^{(\omega)}_t f(x)= G_{t,\omega}(x)J_{-t}(x)f\circ \geo_{-t}(x).
\end{equation}
Let us fix a point $x\in M$, and let $I\in \cI_\rho$ be the segment $\{\horo_s(x) : s \in (0,\rho)\}$. Given $\varphi\in \cC_c^{1+\alpha}(I)$ and $f\in \cC^r(M)$, we need to estimate the absolute value of
\[
\int_I \varphi \cL^{(\omega)}_t f \diff  U =  \int_0^\rho \varphi\circ \horo_s(x)   G_{t,\omega} \circ \horo_s(x) J_{-t}\circ \horo_s(x)f\circ \geo_{-t}\circ \horo_s(x)\diff s.
\]
By Lemma \ref{lem:renormalization} and changing variables (recalling the definition of $J_t$ in Lemma \ref{lemma:Jacobi}) we have

\begin{equation}\label{eq:int1}
\begin{split}
\int_I \varphi \cL^{(\omega)}_t f \diff  U &= \int_0^\rho (\varphi G_{t,\omega})\circ \horo_s(x) J_{-t}\circ \horo_s(x) f\circ \horo_{\tau(s,-t,x)}\circ \geo_{-t}(x)  \diff s\\
&=  \int_0^{\tau(\rho,-t,x)}  (\varphi G_{t,\omega})\circ \geo_t\circ \horo_\eta\circ \geo_{-t}(x) f\circ \horo_\eta \circ \geo_{-t}(x)\diff \eta.
\end{split}
\end{equation}

We now partition the segment $\geo_{-t}(I)$ into the union $\geo_{-t}(I)=\bigcup_{j=1}^{N_t} I_j$, where the $I_j$ are segments of length $\rho$, and we take a smooth partition of unity $\{\psi_j\}_j$ made of functions supported on $I_j$ and such that $\sum_{j=1}^{N_t} \psi_j=1$ on $\geo_{-t}(I)$ and $\|\psi_j\|_{\cC^r(I_j)}\le C_*$, for some constant $C_*>0$ depending on $\rho$ and $r$ which are fixed. Note that the number $N_t$ is proportional (with a constant independent on $t$) to the length of $\geo_{-t}(I_j)$. Crucially, by \Cref{lemma:Marcus} (see \cite[Lemma C.3]{GiLiPo}), this length grows proportionally to $e^{h_{\topp}t}$, whereby there exists $\sC>0$ such that
\begin{equation}\label{eq:N_t}
N_t \le \sC e^{h_{\topp}t}.
\end{equation}
Using the above partition of unity and letting $\eta_j \in \operatorname{supp} \psi_j$, by \eqref{eq:int1} we have
\begin{equation}\label{eq:int2}
\begin{split}
\left|\int_I \varphi \cL^{(\omega)}_t f \diff  U \right| &\le
  \sum_{j=1}^{N_t}\left| \int_{\eta_j}^{\eta_j+\rho}  (\varphi G_{t,\omega})\circ \geo_t\circ \horo_\eta\circ \psi_j(\eta) \cdot f\circ \horo_\eta \circ \psi_j(\eta)\diff \eta \right|\\
 &\le \sC e^{h_{\topp}t}  \sup_{I_j}\left| \int_{\eta_j}^{\eta_j+\rho}  (\varphi G_{t,\omega})\circ \geo_t\circ \horo_\eta\circ \psi_j(\eta) \cdot f\circ \horo_\eta \circ \psi_j(\eta)\diff \eta \right|,
\end{split}
\end{equation}
where we have used that $\horo_\eta\circ \geo_{-t}(x)=\horo_{\tau(s,-t,x)}\circ \geo_{-t}(x)$ for any $s\in [0,\rho]$ in the first step and \eqref{eq:N_t} in the second. Inequality \eqref{eq:LY1-L} follows taking the supremum over $I\subset \cI_\rho$ and applying Proposition \ref{prop:G} in the appendix with $\varsigma=1+\alpha$. Obviously, $C_{1+\alpha}(t)$ in \eqref{eq:C(t)} is bounded by some $C_\omega$ which depends on $\omega$ only through its $\cC^2$ norm .\\

%%%%%%%%%%%%strong LY for \cL^{(\omega)}%%%%%%%%%%

For the $\|\cdot\|_{1,\alpha}$-norm we must estimate, for $\varphi\in \cC^{1+\alpha}$, the absolute value of
\[
\mathfrak{I}_X:=\int_{I} X(\cL^{(\omega)}_tf)\varphi \diff U \qquad \text{and} \qquad \mathfrak{I}_V:= \int_{I} V(\cL^{(\omega)}_tf)\varphi \diff U.
\]
Let us start with  $\mathfrak{I}_X$. Recalling \eqref{eq:TO} and \eqref{eq:div}, we have
\begin{equation}\label{eq:I_X}
\begin{split}
X(\cL^{(\omega)}_tf)&=2\pi \imath (\langle \omega, X \rangle \circ \geo_{-t}-\langle \omega, X \rangle) \cdot G_{t,\omega}\cdot J_{-t}\cdot f\circ \geo_{-t}\\
&+ ({\Phi^-}\circ \geo_{-t} -{\Phi^-})\cdot G_{t,\omega} \cdot J_{-t}f\circ \geo_{-t}\\
&+G_{t,\omega} \cdot J_{-t} \cdot  X(f\circ \geo_{-t}).
\end{split}
\end{equation}
Hence, using the change of variables as in \eqref{eq:int2} and  the partition of unity $\psi_j$ as in \eqref{eq:int1}, we have
\[
\begin{split}
|\mathfrak I_X | \le & Ce^{h_{\topp}t}\sup_{I_j} \bigg|  \int_{\eta_j}^{\eta_j+\rho} [f\cdot (\langle \omega, X \rangle- \langle \omega, X \rangle \circ \geo_t )]\circ h_\eta \circ \psi_j \cdot (G_{t,\omega}\varphi)\circ \geo_t\circ h_\eta \circ \psi_j\diff \eta   \\
&+ \int_{\eta_j}^{\eta_j+\rho} [f\cdot ({\Phi^-} -{\Phi^-} \circ \geo_t)]\circ h_\eta \circ \psi_j \cdot (G_{t,\omega}\varphi)\circ \geo_t\circ h_\eta \circ \psi_j\diff \eta\\
&+\int_{\eta_j}^{\eta_j+\rho}  Xf \circ h_\eta \circ \psi_j \cdot (G_{t,\omega}\varphi)\circ \geo_t\circ h_\eta \circ \psi_j\diff \eta \bigg|.
\end{split}
\]
To estimate the three terms above it is sufficient to note that
\[
\begin{split}
\|(\langle \omega, X \rangle- \langle \omega, X \rangle \circ \geo_t )\circ h_\eta \circ \psi_j\|_{\cC^{\alpha}}\le C_*\|\langle \omega, X \rangle\|_{\cC^1}
\end{split}
\]
and
\[
\|({\Phi^-}-{\Phi^-} \circ \geo_t)\circ h_\eta \circ \psi_j\|_{\cC^\alpha} \le C_*\|{\Phi^-}\|_{\cC^1}.
\]
Therefore, by Proposition \ref{prop:G} with $\varsigma=\alpha$ in \eqref{eq:C(t)}, we conclude that
\begin{equation}\label{eq:J_X}
\left|  \int_{I} X(\cL^{(\omega)}_tf)\varphi \diff U \right|\le  e^{h_{\topp}t}C_\omega(\|f\|_{0,\alpha}+\|Xf\|_{0,\alpha}).
\end{equation}

Let us now estimate $\mathfrak I_V$. Recalling \eqref{eq:TO}, \eqref{eq:potential} and Remark \ref{rmk:Ju} we have,

\[
\begin{split}
V(\cL^{(\omega)}_tf)=&2\pi \imath\left(\int_0^t J^u_{-a}\cdot [V(\langle \omega, X \rangle)]\circ \geo_{-a}\diff a-\int_0^t J^u_{-a} (V{\Phi^-})\circ \geo_{-a}\diff a\right)G_{t,\omega}\cdot J_{-t}\cdot f\circ\geo_{-t} \\
&+G_{t,\omega} \cdot J_{-t}\cdot J^u_{-t}\cdot  Vf\circ \geo_{-t}.
\end{split}
\]
Multiplying the above equation by $\varphi\in \cC^{1+\alpha}$, integrating on $I\subset \cI_\rho$ and using the partition of unity $\psi_j$ as in \eqref{eq:int2}, yields
\begin{multline*}
e^{-h_{\topp}t}|\mathfrak I_V | \\
\le \sC\sup_{I_j} \bigg|  \int_{\eta_j}^{\eta_j+\rho} \left[ f\left(\int_0^t J^u_{-a}\circ \geo_t\cdot V(\langle \omega, X \rangle) \circ \geo_{t-a} \diff a\right)\right]\circ \horo_\eta\circ \psi_j\cdot (G_{t,\omega}\varphi)\circ \geo_t\circ \horo_\eta\circ \psi_j\diff \eta \bigg|  \\
+ \sC\sup_{I_j} \bigg|\int_{\eta_j}^{\eta_j+\rho} \left[ f\left(\int_0^t J^u_{-a}\circ \geo_t (V{\Phi^-})\circ \geo_{t-a}) \diff a\right)\right]\circ \horo_\eta\circ \psi_j\cdot (G_{t,\omega}\varphi)\circ \geo_t\circ \horo_\eta\circ \psi_j \diff \eta \bigg| \\
+\sC\sup_{I_j} \bigg| \int_{\eta_j}^{\eta_j+\rho} (Vf)\circ \horo_\eta\circ \psi_j \cdot J^u_{-t}\circ\horo_\eta\circ \psi_j \cdot (G_{t,\omega}\varphi)\circ \geo_t\circ \horo_\eta\circ \psi_j \diff \eta \bigg|.
\end{multline*}
Let us call the three lines above $\mathfrak I^{(1)},\mathfrak I^{(2)}$ and $\mathfrak I^{(3)}$ respectively. Recall that, by Remark \ref{rmk:Ju}, $|J_{-t}^u|\le e^{-\sqrt{\overline k}t}$; then, proving that
\[
\begin{split}
&\left\| \int_0^t J^u_{-a}\circ \geo_t \circ \horo_\eta\circ \psi_j\cdot V(\langle \omega, X \rangle)\circ \geo_{t-a} \circ \horo_\eta\circ \psi_j\diff a \right\|_{\cC^\alpha}  \le \frac{C_*}{(1+\alpha) \sqrt{-\overline k}}\|V(\langle \omega, X \rangle)\|_{\cC^\alpha},\\
&\left\|\left(\int_0^t J^u_{-a}\circ \geo_t \circ \horo_\eta\circ \psi_j\cdot (V{\Phi^-})\circ \geo_{t-a}\circ \horo_\eta\circ \psi_j) \diff a\right)\right\|_{\cC^\alpha}\le \frac{C_*}{(1+\alpha) \sqrt{-\overline k}}\|V({\Phi^-})\|_{\cC^\alpha}
\end{split}
\]
can be done exactly as in the proof of Proposition \ref{prop:G} (using $J^u_t$ and $V$ instead of $J_t$ and $U$) and it is left to the reader. On the other hand, Proposition \ref{prop:G} gives also $\|(G_{t,\omega}\varphi)\circ \geo_t\circ \horo_\eta\circ \psi_j\|_{\cC^\alpha}\le C_\alpha(t).$
It follows that
\[
\mathfrak I^{(1)}+\mathfrak I^{(2)} \le \frac{C C_* C_\alpha(t)}{(1+\alpha) \sqrt{-\overline k}}\max\{\|V(\Phi^+)\|_{\cC^\alpha}, \|V(\langle \omega, X \rangle)\|_{\cC^\alpha}\}\cdot \|f\|_{0,\alpha}\le C_\omega \|f\|_{0,\alpha}.
\]
Finally, let us estimate the $\cC^\alpha$ norm of $J_{-t}\circ \geo_t\circ \horo_\eta \circ \psi_j$ to bound $\mathfrak J^{(3)}$. Recalling \eqref{eq:div} and since $|J^u_{-t}|\le e^{-\sqrt{-\overline k}t}$,
\[
\begin{split}
|(J^u_{-t}\circ \geo_t\circ \horo_\eta \circ \psi_j)' |&=\left| \left(\exp\int_0^{-t} \Phi^+\circ \geo_a\cdot \geo_t\circ \horo_\eta\circ \psi_j \diff a \right)'  \right|\\
&=\left| \int_0^t(\Phi^+\circ g_{t-a}\circ \horo_\eta\circ \psi_j)' \diff \eta  \right| \cdot |J^u_{-t}\circ \geo_t\circ \horo_\eta \circ \psi_j|\\
&\le \frac{C_*}{\sqrt{-\overline k}}\|\Phi^+\|_{\cC^1} e^{-\sqrt{-\overline k}t},
\end{split}
\]
where we have used \eqref{eq:intJ}. Therefore,
\[
\|J^u_{-t}\circ \geo_t\circ \horo_\eta \circ \psi_j\|_{\cC^\alpha}\le \|J^u_{-t}\circ \geo_t\circ \horo_\eta \circ \psi_j\|_{\cC^1}\le \frac{C_*}{\sqrt{-\overline k}}\|\Phi^+\|_{\cC^1} e^{-\sqrt{-\overline k}t}.
\]
Also, Proposition \ref{prop:G} gives $\|(G_{t,\omega}\varphi)\circ \geo_t\circ \horo_\eta\circ \psi_j\|_{\cC^\alpha}\le C_{1+\alpha}(t)$, and we conclude that
\[
\mathfrak J^{(3)}\le  \frac{C_* C_{1+\alpha}(t)}{\sqrt{-\overline k}}\|\Phi^+\|_{\cC^1} e^{-\sqrt{-\overline k}t}\cdot \|f\|_{1,\alpha}\le C_\omega e^{-\sqrt{-\overline k}t}  \|f\|_{1,\alpha}.
\]
We have obtained 
\begin{equation}\label{eq:J_V}
\begin{split}
e^{-h_{\topp}t}\left| \int_{I} V(\cL^{(\omega)}_tf)\varphi \diff U \right| &\le C_{\omega}\big( \|f\|_{0,\alpha}+e^{-\sqrt{-\overline k}t}\|f\|_{1,\alpha}\big).
\end{split}
\end{equation}
By \eqref{eq:J_X} and \eqref{eq:J_V}, we conclude that 
\[
\|\cL^{(\omega)}_t f\|_{1,\alpha} \le C_\omega e^{h_{\topp}t} \big(e^{-\sqrt{-\overline k}t} \|f\|_{1,\alpha} +\|f\|_{0,\alpha}+\|Xf\|_{0,\alpha}\big).
\]
To obtain the estimate with the $\|\cdot\|_{0,1+\alpha}$-norm, we use a trick from \cite{GoLi} involving mollifiers. For each $0<\epsilon<\rho$, let us consider $\varphi_\epsilon$ obtained by convoluting $\varphi \in \cC_c^{\alpha}(I)$ with a mollifier $j_\epsilon$ with support in $[0,\epsilon]$ and $\int j_\epsilon=1$ so that,
\begin{equation}\label{eq:molly}
\|\varphi_\epsilon-\varphi\|_{\cC^{0}}\le \sC\epsilon \|\varphi\|_{\cC^{\alpha}}, \qquad \|\varphi_\epsilon-\varphi \|_{\cC^{\alpha}}\le \sC, \qquad \|\varphi_\epsilon\|_{\cC^{1+\alpha}}\le \sC \epsilon^{-1}.
\end{equation}
Note that the estimates above are straightforward once one observes that, if $j_\epsilon (x)=\epsilon^{-1}j(\epsilon^{-1} x)$ where $j\in \cC^{\infty}, \operatorname{supp}j \subset [-1,1],$ and $ \int j =1$, then
\[
\int j_\epsilon(x-y) \varphi(y)\diff y=\int j_\epsilon (y) \varphi (x-y)\diff y.
\]
For any $v\in \{X,V\}$ we can write
\[
\int_{I} v(\cL^{(\omega)}_tf)\varphi \diff U = \int_{I} v(\cL^{(\omega)}_tf)(\varphi-\varphi_\epsilon) \diff U+\int_{I} v(\cL^{(\omega)}_tf)\varphi_\epsilon \diff U.
\]
Hence the above computations and Proposition \ref{prop:G} with $\varphi_\epsilon$ and $\varphi-\varphi_\epsilon$ as test functions give $A_\omega,B_\omega(\epsilon)>0$, where $B_\omega(\epsilon)$ may depend on $\epsilon$, such that
\[
e^{-h_{\topp}t}\|\cL^{(\omega)}_t f\|_{1,\alpha} \le A_\omega \max\{e^{-\sqrt{-\overline k}t}, \epsilon\} \|f\|_{1,\alpha} +B_\omega(\epsilon)(\|f\|_{0,1+\alpha}+\|Xf\|_{0,1+\alpha}).
\]
Choosing $\epsilon=e^{-\sqrt{-\overline k}t}$ we conclude that there exists $B_\omega(t)>0$ such that
\[
e^{-h_{\topp}t}\|\cL^{(\omega)}_t f\|_{1,\alpha} \le A_\omega e^{-\sqrt{-\overline k}t} \|f\|_{1,\alpha} +B_\omega(t)(\|f\|_{0,1+\alpha}+\|Xf\|_{0,1+\alpha}).
\]
It remains to get rid of the dependence from $t$ of the constant in front of the weak norm. Let us choose $T:=T_\omega >0$ such that $A_\omega e^{-\sqrt{-\overline k}T}\le e^{-\lambda T} <1$ which is possible since $\sqrt{-\overline k}\ge \lambda$, as we remarked before \Cref{cor:inverse_of_Jt}. Hence, by \eqref{eq:LY1-L},
\begin{equation}\label{eq:LYiterate}
\|\cL^{(\omega)}_{t+T} f\|_{1,\alpha} \le  e^{h_{\topp}T}e^{-\lambda T} \|\cL^{(\omega)}_tf\|_{1,\alpha} +e^{h_{\topp}T}B'_\omega(t) (\|f\|_{0,1+\alpha}+\|Xf\|_{0,1+\alpha})
\end{equation}
Writing $t=n_0T+s_0$, where $s_0\in (0,T)$ and iterating \eqref{eq:LYiterate}, we find $C_\omega>0$ such that
\[
\begin{split}
\|\cL^{(\omega)}_t f\|_{1,\alpha} &\le e^{h_{\topp}Tn_0}e^{-\lambda n_0 T}\|\cL_{s_0}^{(\omega)}f\|_{1,\alpha} +\frac{ e^{h_{\topp}t} B''_\omega(T)}{1-e^{-\lambda}}(\|f\|_{0,1+\alpha}+\|Xf\|_{0,1+\alpha})\\
&\le e^{h_{\topp}t}C_\omega ( e^{-\lambda t} \|f\|_{1,\alpha} +\|f\|_{0,1+\alpha}+\|Xf\|_{0,1+\alpha}),
\end{split}
\]
where we have used \eqref{eq:LY1-L} and the fact that, by \eqref{eq:I_X}, $X$ and $\cL^{(\omega)}_t$ commute up to a term $C_\omega\|\cL_t f\|_{0,1+\alpha}$ which can be bounded by \eqref{eq:LY1-L}. This concludes the proof. 
\end{proof}

%%%%%%%%%%%%%%LY resolvent%%%%%%%%%%%%%%%%%

\subsection{Lasota-Yorke inequality for $\cR_z^{(\omega)}$} While the preceding result is essential for our subsequent estimates, it falls short of providing the desired spectral analysis for the operator $\cL^{(\omega)}_t$. This limitation arises due to the term $\|Xf\|_{0,1+\alpha}$ in \eqref{eq:LY2-L}, responsible for the absence of the compactness ingredient required to apply Hennion's theorem \cite{Hen}. However, a common approach to address this obstacle involves investigating the resolvent of the generator of the semigroup, which will satisfy a {\textit{true}} Lasota-Yorke inequality. This is the goal of this section.
Firstly, we need the following.

\begin{lemma}\label{lem:generator}
For each $\omega \in \bT^d$, the family $\{\cL^{(\omega)}_t\}_{t\ge 0}:\cB \to \cB$ is a strongly continuous semigroup of bounded operators and its generator is the closed operator $Z_\omega:\operatorname{Dom}(Z_\omega)\to \cB$, with $\overline{\operatorname{Dom}(Z_\omega)}=\cB$, given by 
\[
Z_\omega= - X+\Phi^- +2\pi \imath \langle \omega, X \rangle.
\]
\end{lemma}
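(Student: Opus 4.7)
The plan is to proceed in three steps: semigroup property, strong continuity on $\cB$, and identification of the generator.

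\textbf{Step 1 (Semigroup).} From the representation \eqref{eq:TO} of $\cL^{(\omega)}_t$, the relation $\cL^{(\omega)}_{t+s} = \cL^{(\omega)}_t \cL^{(\omega)}_s$ reduces to two cocycle identities. First, $J_{-(t+s)}(x) = J_{-t}(x) \cdot J_{-s}(\geo_{-t}(x))$, which follows from the multiplicative cocycle property of $J$ recorded in \eqref{eq:potential} (as a derivative cocycle of the geodesic flow). Second, one checks
\[
G_{t+s,\omega}(x) = G_{t,\omega}(x) \cdot G_{s,\omega}(\geo_{-t}(x)),
\]
which is immediate from the defining integral \eqref{eq:G} via a change of variables. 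Also $\cL^{(\omega)}_0 = \operatorname{Id}$. Boundedness of each $\cL^{(\omega)}_t$ on $\cB = \cB_{1,\alpha}$ is already given by the Lasota--Yorke estimate \eqref{eq:LY2-L} of \Cref{lem:LY-L}, since it bounds $\|\cL^{(\omega)}_t f\|_{1,\alpha}$ in terms of $\|f\|_{1,\alpha}$ and $\|Xf\|_{0,1+\alpha} \le \sC \|f\|_{\cC^r}$ on the dense subset $\cC^r$, and then extends by density.

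\textbf{Step 2 (Strong continuity).} I would first establish strong continuity on the dense subspace $\cC^r \subset \cB$ (cf.\ \Cref{lem:useful}). For $f \in \cC^r$, the explicit formula \eqref{eq:TO} together with smoothness of $t\mapsto \geo_{-t}(x)$, $t\mapsto J_{-t}(x)$ and $t\mapsto G_{t,\omega}(x)$ (with uniform $\cC^{r-1}$ bounds on compact time intervals) shows that $\cL^{(\omega)}_t f \to f$ in $\cC^{r-1}$-norm as $t\to 0^+$. The embedding $\cC^{r-1} \hookrightarrow \cB$ then gives convergence in the $\|\cdot\|_{1,\alpha}$-norm. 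To pass from $\cC^r$ to $\cB$, I combine this with the uniform bound $\sup_{t\in[0,1]} \|\cL^{(\omega)}_t\|_{\cB\to\cB} < \infty$ supplied by \Cref{lem:LY-L} and a standard $3\varepsilon$-argument using density.

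\textbf{Step 3 (Generator).} For $f\in \cC^r$, I would differentiate \eqref{eq:TO} at $t=0$: using $\frac{d}{dt}\big|_{t=0} f\circ \geo_{-t} = -Xf$, $\frac{d}{dt}\big|_{t=0} J_{-t} = \Phi^-$ (from \eqref{eq:potential}--\eqref{eq:div}), and $\frac{d}{dt}\big|_{t=0} G_{t,\omega} = 2\pi\imath\langle \omega, X\rangle$, the product rule yields
\[
\lim_{t\to 0^+} \frac{\cL^{(\omega)}_t f - f}{t} = -Xf + \Phi^- f + 2\pi\imath\langle\omega,X\rangle f,
\]
with convergence in $\cC^{r-2}$, hence in $\cB_w$. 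One needs the convergence in $\cB$, which is obtained by interpolating with the uniform bound on $\|(\cL^{(\omega)}_t f - f)/t\|_{\cB}$ coming from \eqref{eq:LY2-L}. This shows $\cC^r \subset \operatorname{Dom}(Z_\omega)$ with $Z_\omega$ acting as stated. The generator of a strongly continuous semigroup is always closed and densely defined, so $\overline{\operatorname{Dom}(Z_\omega)}=\cB$; moreover, since $\cC^r$ is $\cL^{(\omega)}_t$-invariant and dense in $\cB$, it is a core, so $Z_\omega$ is the closure of the stated differential operator on $\cC^r$.

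\textbf{Main obstacle.} The delicate point is upgrading strong continuity and differentiability at $t=0$ from the classical $\cC^{r-k}$ topologies, where the formulas are transparent, to the anisotropic norm $\|\cdot\|_{1,\alpha}$, whose definition in \eqref{eq:norm} is dual and involves integration against test functions along stable segments. Density of $\cC^r$ in $\cB$ combined with the uniform-in-$t$ bounds from \Cref{lem:LY-L} is what makes this transition go through.
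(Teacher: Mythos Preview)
Your plan is essentially the same as the paper's: density of $\cC^r$ in $\cB$ combined with the Lasota--Yorke bounds from \Cref{lem:LY-L} for strong continuity, then direct differentiation of \eqref{eq:TO} for the generator formula. There is one slip to correct. In Step~1 (and implicitly in the $3\varepsilon$-argument of Step~2) you justify boundedness on $\cB$ by writing $\|Xf\|_{0,1+\alpha}\le\sC\|f\|_{\cC^r}$ and then ``extending by density''; but density extension from $\cC^r$ to $\cB$ requires a bound of the form $\|\cL^{(\omega)}_t f\|_{\cB}\le C_{\omega,t}\|f\|_{\cB}$, not merely $\le C_{\omega,t}\|f\|_{\cC^r}$. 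The fix is one line: directly from the definition \eqref{eq:norm}, the $j=1,\ v=X$ term of $\|f\|_{1,\alpha}$ is exactly $\sup_{I,\varphi}|\int_I\varphi\cdot Xf\,\diff U|$ with $\varphi\in\cC^{1+\alpha}$, i.e.\ $\|Xf\|_{0,1+\alpha}$, so $\|Xf\|_{0,1+\alpha}\le\|f\|_{1,\alpha}$ and \eqref{eq:LY2-L} already yields the required $\cB\to\cB$ bound. This is what the paper uses (implicitly) when it applies \Cref{lem:LY-L} to $f-f_\epsilon\in\cB$ in its triangle-inequality argument.

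In Step~3 the ``interpolation'' you invoke is unnecessary and a bit vague: for $f\in\cC^r$, one checks that $s\mapsto\partial_s\cL^{(\omega)}_s f$ is continuous into $\cC^{r-1}$, and since $\cC^{r-1}\hookrightarrow\cB_{1,\alpha}$ (only one derivative is needed in \eqref{eq:norm}), the fundamental theorem of calculus gives $\tfrac{1}{t}(\cL^{(\omega)}_t f-f)\to(\partial_t\cL^{(\omega)}_t f)|_{t=0}$ in $\cB$ directly. This is how the paper argues.
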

\begin{proof}
%\Phi^-
We claim that $\lim_{t\to 0}\|\cL_t^{(\omega)}f-f\|_{\cB}=0$ for each $f\in \cB$ and each $\omega \in \T^d$, which implies strong continuity. 
For any $\epsilon >0$, let  $  f_\epsilon \in \cC^{r}(M)$ such that $\| f_\epsilon- f \|_{\cB}<\epsilon$.
Then, by \Cref{lem:LY-L},
\[
\begin{split}
\|\cL_t^{(\omega)} f- f\|_{\cB} &\le \|\cL_t^{(\omega)}( f- f_\epsilon)\|_{\cB}+\|\cL_t^{(\omega)} f_\epsilon- f_\epsilon \|_{\cB}+\| f- f_\epsilon\|_{\cB} \\
&\le (C_\omega+1) \epsilon + \|\cL_t^{(\omega)} f_\epsilon- f_\epsilon \|_{\cB}.
\end{split}
\]
Therefore, since $\partial_{s} {\cL_s^{(\omega)}}f_\epsilon \in \cC^{r-1}$,
\[
\begin{split}
\|\cL_t^{(\omega)} f_\epsilon- f_\epsilon\|_{\cB} & =\left\|\int_0^t \partial_{s} {\cL_s^{(\omega)}}f_\epsilon {\diff s} \right\|_{\cB} \leq t\sup _{0\le s\le t}\left\|\partial_{s} {\cL_s^{(\omega)}}f_\epsilon \right\|_{\cB} \leq t C(\omega,f_\epsilon),
\end{split}
\]
for some $C(\omega,f_\epsilon)>0$ which depends on $\omega$ and $f_\epsilon$ but not on $t$. This implies the claim.

Let us show the formula for $Z_\omega$. We must compute 
\[
Z_\omega  f=\lim_{t\to 0} \dfrac{\cL_t^{(\omega)} f- f}{t}=(\partial_t\cL_t^{(\omega)}  f)|_{t=0}, \qquad  f\in \cC^{r-1}(M).
\]
We have
\[
\begin{split}
\partial_t\cL_t^{(\omega)}  f =& \partial_t(\cL_t(e^{2\pi \imath F_{t,\omega}} f))=\partial_t(J_{-t}\cdot [e^{2\pi \imath F_{t,\omega}} f]\circ \geo_{-t})\\
=&\partial_t J_{-t}  \cdot (e^{2\pi \imath F_{t,\omega}} f)\circ \geo_{-t}+J_{-t}\cdot  e^{2\pi \imath F_{t,\omega}\circ \geo_{-t}} [2\pi \imath \partial_t(F_{t,\omega}\circ \geo_{-t})  f\circ \geo_{-t}+\partial_t( f\circ \geo_{-t})].
\end{split}
\]
Computing the above in $t=0$ yields
\[
(\partial_t\cL_t^{(\omega)}  f)|_{t=0}=\left(\Phi^- +2\pi \imath (\partial_t(F_{t,\omega}\circ \geo_{-t}))|_{t=0}-X \right) f.
\]
To compute the last term, by \eqref{eq:F} we have, for each $x\in M$,
\[
2\pi \imath \partial_t(F_{t,\omega}\circ \geo_{-t}(x))=2\pi \imath \partial_t \int_{-t}^0 \langle \omega, X \rangle\circ \geo_{s}(x) \diff s= 2\pi \imath \langle \omega, X \rangle\circ \geo_{-t}(x),
\]
whereby $2\pi \imath (\partial_t(F_{t,\omega}\circ \geo_{-t}))|_{t=0}=2\pi \imath \langle \omega, X \rangle$.
\end{proof}

For each $\omega\in \T^d$, let \[\cR^{(\omega)}_z f= (z-Z_\omega)^{-1}f\] be the resolvent of the generator $Z_\omega$, for $z$ not in the spectrum of $Z_\omega$ on $\cB$. 
%For $t\in \mathbb R^+$, let  $\cL_{t}$ acting on $\cB$. 
It is easy to prove, by induction, that, for each $n \ge 1$,
\begin{equation}\label{eq:R-formula}
[\cR_{z}^{(\omega)}]^n f=\frac{1}{(n-1)!}\int_0^\infty t^{n-1}e^{-zt}\cL^{(\omega)}_t f \diff t.
\end{equation}
The following is a {\it true} Lasota-Yorke type inequality for the resolvent. 

\begin{lemma}\label{lem:LY-R}
For each $\omega\in \bT^d$ and $z\in \C$ with $\re(z)>h_{\topp}$, $n\in \N,$ and $f\in \cC^r$, we have
\begin{equation}\label{eq:LY1_R}
\|[\cR_z^{(\omega)}]^n\|_{\cB_w}\le \frac{C_\omega}{(\re(z)-h_{\topp})^{n}}
\end{equation}
and
\begin{equation}\label{eq:LY2_R}
\|[\cR_z^{(\omega)}]^n f\|_{\cB}\le C_\omega \left(\frac{1}{ (\re(z)-h_{\topp}+\lambda)^{n}}\|f\|_{\cB}+\frac{|z|+1}{(\re(z)-h_{\topp})^n}\|f\|_{\cB_w}\right).
\end{equation}
\end{lemma}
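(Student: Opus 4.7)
The weak estimate \eqref{eq:LY1_R} follows immediately from the integral representation \eqref{eq:R-formula} combined with the weak Lasota--Yorke bound \eqref{eq:LY1-L}. Taking the $\cB_w$-norm inside the integral, applying \eqref{eq:LY1-L} pointwise in $t$, and evaluating $\int_0^\infty t^{n-1}e^{-(\re(z)-h_{\topp})t}\diff t=(n-1)!/(\re(z)-h_{\topp})^n$ yields the claim.

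For the strong estimate \eqref{eq:LY2_R}, the plan is to decompose the $\cB=\cB_{1,\alpha}$-norm of $[\cR_z^{(\omega)}]^n f$ according to the three cases $(j,v)$ in its defining supremum: $j=0$, $(j=1,v=V)$, and $(j=1,v=X)$. The first case is exactly $\|[\cR_z^{(\omega)}]^n f\|_{\cB_w}$, already controlled by \eqref{eq:LY1_R}. For $(j=1,v=V)$ I commute $V$ with the integral in \eqref{eq:R-formula} and rely on the pointwise bound
\[
\left|\int_I \varphi\,V\cL_t^{(\omega)}f\,\diff U\right|\le C_\omega e^{h_{\topp}t}\bigl(e^{-\lambda t}\|f\|_{\cB}+\|f\|_{\cB_w}\bigr),\qquad \|\varphi\|_{\cC^{1+\alpha}}\le 1,
\]
which is precisely the estimate on $\mathfrak{I}_V$ established inside the proof of Lemma \ref{lem:LY-L} (see \eqref{eq:J_V}), upgraded via the mollifier trick applied solely to the $\|f\|_{0,\alpha}$-factor so as to replace it by $\|f\|_{0,1+\alpha}=\|f\|_{\cB_w}$. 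Critically, this bound involves no $\|Xf\|$ term, since the $\|Xf\|_{0,1+\alpha}$ contribution in \eqref{eq:LY2-L} came entirely from $\mathfrak{I}_X$, not from $\mathfrak{I}_V$. Integrating against $t^{n-1}e^{-\re(z)t}$ then produces exactly the contributions $\frac{C_\omega}{(\re(z)-h_{\topp}+\lambda)^n}\|f\|_{\cB}$ and $\frac{C_\omega}{(\re(z)-h_{\topp})^n}\|f\|_{\cB_w}$.

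The case $(j=1,v=X)$ is the main difficulty and is where the factor $|z|+1$ in \eqref{eq:LY2_R} originates; applying \eqref{eq:LY2-L} naively under the integral would produce an uncontrolled $\|Xf\|_{\cB_w}$-term. The resolution is to bypass the semigroup entirely and work at the level of the resolvent using the generator identity from Lemma \ref{lem:generator}, $X=-Z_\omega+\Phi^-+2\pi\imath\langle\omega,X\rangle$, together with the identity $Z_\omega[\cR_z^{(\omega)}]^n f=z[\cR_z^{(\omega)}]^n f-[\cR_z^{(\omega)}]^{n-1}f$ (with the convention $[\cR_z^{(\omega)}]^0=\mathrm{Id}$), which is a direct consequence of $(zI-Z_\omega)\cR_z^{(\omega)}=\mathrm{Id}$. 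Combining them gives
\[
X[\cR_z^{(\omega)}]^n f=-z\,[\cR_z^{(\omega)}]^n f+[\cR_z^{(\omega)}]^{n-1}f+(\Phi^-+2\pi\imath\langle\omega,X\rangle)\,[\cR_z^{(\omega)}]^n f,
\]
so the $X$-derivative is entirely expressed through resolvents. Bounding each summand in $\cB_w$ via \eqref{eq:LY1_R} (using that multiplication by the smooth function $\Phi^-+2\pi\imath\langle\omega,X\rangle$ is bounded on $\cB_w$, since $\cC^{1+\alpha}$ is a Banach algebra), and absorbing the $[\cR_z^{(\omega)}]^{n-1}$ term via $(\re(z)-h_{\topp})^{-(n-1)}\le C(|z|+1)(\re(z)-h_{\topp})^{-n}$ (valid on $\{\re(z)>h_{\topp}\}$ since $\re(z)-h_{\topp}\le|z|+h_{\topp}$), yields $\|X[\cR_z^{(\omega)}]^n f\|_{\cB_w}\le C_\omega(|z|+1)(\re(z)-h_{\topp})^{-n}\|f\|_{\cB_w}$. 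Summing the three contributions gives \eqref{eq:LY2_R}, and density of $\cC^r$ in $\cB$ extends the inequality to all $f\in\cB$. Conceptually, the main obstacle is the asymmetry between the $V$-direction (benefiting from hyperbolicity via $e^{-\lambda t}$) and the $X$-direction (the flow direction, which admits no such decay and must be handled algebraically through the generator), and it is precisely this asymmetry that gives both the gain of $\lambda$ in the $\|f\|_{\cB}$-term and the cost of $|z|+1$ in the $\|f\|_{\cB_w}$-term.
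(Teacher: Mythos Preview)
Your proof is correct. The weak bound and the $v=V$ contribution to the strong bound follow the paper's argument essentially verbatim (the paper likewise integrates the pointwise estimate \eqref{eq:J_V} against $t^{n-1}e^{-\re(z)t}$ and then invokes the mollifier/iteration trick to pass from $\|f\|_{0,\alpha}$ to $\|f\|_{0,1+\alpha}$, with the same level of detail you give).

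The genuine difference is in the $v=X$ contribution. The paper works under the integral representation and uses the key identity $X(f\circ\geo_{-t})=-\frac{\diff}{\diff t}(f\circ\geo_{-t})$, then integrates by parts in $t$ against $t^{n-1}e^{-zt}$; this produces a boundary term at $t_0$ (hence the need for the truncated resolvent $\overline{\cR}_z^{(\omega)}(n)$), a term with an extra factor $z$, a term of lower order in $n$, and terms coming from $\frac{\diff}{\diff t}(G_{t,\omega}J_{-t})$. Your route is the operator-algebraic version of the same mechanism: since $Z_\omega=-X+\Phi^-+2\pi\imath\langle\omega,X\rangle$ and $(z-Z_\omega)[\cR_z^{(\omega)}]^n=[\cR_z^{(\omega)}]^{n-1}$, you obtain the closed identity $X[\cR_z^{(\omega)}]^n f=-z[\cR_z^{(\omega)}]^n f+[\cR_z^{(\omega)}]^{n-1}f+(\Phi^-+2\pi\imath\langle\omega,X\rangle)[\cR_z^{(\omega)}]^n f$ directly, and bound each summand in $\cB_w$ using \eqref{eq:LY1_R} and the fact that multiplication by $\Phi^-+2\pi\imath\langle\omega,X\rangle\in\cC^{1+\alpha}$ is bounded on $\cB_w$. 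The two computations are equivalent (the resolvent identity is the Laplace transform of the integration by parts), but your formulation is cleaner: it avoids the truncated resolvent entirely and makes the origin of the factor $|z|+1$ completely transparent.
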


\begin{proof}
Since, for each $n \ge 1$,
\begin{equation}\label{eq:Gamma}
\left|\int_0^{\infty} \frac{t^{n-1}}{(n-1) !} e^{-z t} \diff t\right| \leqslant \int_0^{\infty} \frac{t^{n-1}}{(n-1) !} e^{-\re(z) t} \diff t \leqslant \frac{1}{\re(z)^{n}},
\end{equation}
inequality \eqref{eq:LY1_R} is obtained by \eqref{eq:R-formula} and \eqref{eq:LY1-L} :
\[
\|[\cR_z^{(\omega)}]^n f\|_{\cB_w} \leq \frac{C_\omega}{(n-1) !}\|f\|_{\cB_w} \int_0^{\infty}  e^{-(\re(z)-h_{\topp})t} t^{n-1}\diff t=\frac{C_\omega}{(\re(z)-h_{\topp})^n} \|f\|_{\cB_w}.
\]
To prove inequality \eqref{eq:LY2_R} we introduce the truncated resolvent as in \cite{GiLiPo}: for $t_0>0$ and $n\in \N$ let
\[
\overline\cR_z^{(\omega)}(n):=\frac{1}{(n-1) !} \int_{t_0}^{\infty} t^{n-1} e^{-z t} \mathcal{L}_t^{(\omega)}\diff t.
\]
By Lemma \ref{lem:LY-L}, for each $n \ge t_0 e (\re(z)-h_{\topp}+\lambda)$, we have
\[
\begin{split}
\|[\cR_z^{(\omega)}]^n-\overline\cR_z^{(\omega)}(n)\|_{\cB}& \leq C_\omega\int_0^{t_0} \frac{ t^{n-1} e^{(-\re(z)+h_{\topp}) t}}{(n-1) !} \diff t\leq C_\omega \frac{t_0^n}{n !} \leq C_\omega\left(\re(z)-h_{\topp}+\lambda \right)^{-n}.
\end{split}
\]
It is thus enough to estimate, for $\varphi\in \cC^{1+\alpha}$, $\|\varphi\|_{\cC^{1+\alpha}}\le 1$, and $v\in \{X,V\}$, the absolute value of
\begin{equation}\label{eq:intR}
\int_{t_0}^{\infty} \frac{ t^{n-1} e^{-z t}}{(n-1) !}\int_{I} v(\cL^{(\omega)}_tf)\varphi \diff U \diff t.
\end{equation}
If $v=V$ then the computation is exactly as the one done to prove \eqref{eq:J_V}, and \eqref{eq:Gamma} yields 
\[
\begin{split}
\left| \int_{t_0}^{\infty} \frac{ t^{n-1} e^{-z t}}{(n-1) !}\int_{I} v(\cL^{(\omega)}_tf)\varphi \diff U \diff t \right| 
&\le C_\omega \left( \|f\|_{1,\alpha}  \left| \int_{t_0}^{\infty} \frac{ t^{n-1} e^{(-z+h_{\topp} )t} e^{-\lambda t}}{(n-1) !} \diff t \right|+ \|f\|_{0,\alpha}\right)\\
& \le  C_\omega\left( \frac{1}{(\re(z)-h_{\topp}+\lambda)^{n}} \|f\|_{1,\alpha}+ \|f\|_{0,\alpha}\right) .
\end{split}
\]
If $v=X$, we start from \eqref{eq:I_X} from the proof of Lemma \ref{lem:LY-L}. The first two terms are estimated as in the aforementioned proof, so that their contributions in \eqref{eq:intR} are bounded by
\[
\frac{C_\omega}{(\re(z)-h_{\topp})^n}\|f\|_{0,\alpha}.
\]  
It remains to estimate the integral involving the term $G_{t,\omega} \cdot J_{-t} \cdot  X(f\circ \geo_{-t})$, which is the one responsabile of the missing ingredient in the Lasota-Yorke for the semigroup. In the present case, the key observation is $X(f\circ \geo_{-t})= -\frac{\diff }{\diff  t}f\circ \geo_{-t}$ so that, Fubini theorem and integration by parts yield
\[
\begin{split}
&\left|\int_{t_0}^{\infty} \frac{ t^{n-1} e^{-z t}}{(n-1) !}\int_{I} G_{t,\omega} \cdot J_{-t} \cdot  X(f\circ \geo_{-t})\varphi \diff U \diff t\right| \\
&\le \left|\int_{t_0}^{\infty} \int_{I}\left( \frac{ t^{n-2} e^{-z t}}{(n-2) !}-z  \frac{ t^{n-1} e^{-z t}}{(n-1) !}\right) G_{t,\omega} \cdot J_{-t} \cdot \varphi\cdot f\circ \geo_{-t} \diff U \diff t\right|\\
&+\left|\int_{t_0}^{\infty} \int_{I} \frac{ t^{n-1} e^{-z t}}{(n-1) !}\frac{\diff}{\diff t}\left( G_{t,\omega} \cdot J_{-t}\right) \cdot \varphi\cdot f\circ \geo_{-t} \diff U \diff t\right|+ \left| \int_{I} \frac{ t_0^{n-1} e^{-z t_0}}{(n-1) !}\left( G_{t_0,\omega} \cdot J_{-t_0}\right) \cdot \varphi\cdot f\circ \geo_{-t_0} \diff U \right| .
\end{split}
\]
The integrals involving the term $G_{t,\omega} \cdot J_{-t} \cdot \varphi$ in the test function can be treated as in the proof of Lemma \ref{lem:LY-L} (as for the first two terms in \eqref{eq:I_X}) and, using again \eqref{eq:Gamma}, are bounded by
\[
C_\omega\frac{|z|}{(\re(z)-h_{\topp})^n}\|f\|_{0,\alpha}.
\]
Finally, recalling \eqref{eq:G} and \eqref{eq:potential}, we have
\begin{equation}\label{eq:GJ}
\begin{split}
\frac{\diff}{\diff t}G_{t,\omega} &= 2\pi \imath \langle \omega, X \rangle\circ g_{-t} \cdot G_{t,\omega},\\
\frac{\diff}{\diff t}J_{-t} &=\Phi^-\circ g_{-t} \cdot J_{-t},
\end{split}
\end{equation}
which are also of the same kind of the the first two terms in \eqref{eq:I_X}. Therefore, the integral involving the term $\frac{\diff}{\diff t}\left( G_{t,\omega} \cdot J_{-t}\right)$ is bounded by 
\[
C_\omega\frac{1}{(\re(z)-h_{\topp})^n}\|f\|_{0,\alpha}.
\]
Gathering the above estimates and taking the sup over $\varphi\in \cC^{1+\alpha},v\in \{X,V\}, I \subset \cI_\rho$, we obtain 
\[
\|[\cR_z^{(\omega)}]^n f\|_{1,\alpha} \le C_\omega \left(\frac{1}{ (\re(z)-h_{\topp}+\lambda)^{n}}\|f\|_{1,\alpha}+\frac{|z|+1}{(\re(z)-h_{\topp})^n}\|f\|_{0,\alpha}\right).
\]
To obtain the desired estimate for each $n \ge 1$ and for the norm $\|\cdot\|_{0,1+\alpha}$ we use the same mollifier trick used in \eqref{eq:molly} and we proceed analogously by iterating the inequality.
\end{proof}

In the following, we denote by $\cB'_w$ and by $\cB'$ the dual spaces of $\cB_w$ and $\cB$ respectively.

\begin{corollary}\label{prop:spectral}
For each $\omega \in \T^d$ and $z\in \C$ with $\re(z)>h_{\topp}$, we have:
\begin{itemize}
\item[(i)] The spectral radius of $\mathcal{R}^{(\omega)}_z$ on $\cB$ is $|\re(z)-h_{\topp}|^{-1}.$
\item[(ii)] The essential spectral radius of $\mathcal{R}^{(\omega)}_z$ on $\cB$ is bounded by $|\re(z)-h_{\topp}+\lambda |^{-1}$ and the set $\{z\in \mathfrak{sp}(Z_\omega|_{\cB}): \re(z)>h_{\topp}- \lambda\}$ consists of isolated eigenvalues with finite multiplicity
\item[(iii)] $h_{\topp}$ is the only element of the set (peripheral spectrum) $ \{z\in \mathfrak{sp}(Z_0|_{\cB}) \mid \re(z)=h_{\topp}\}$ and it is a simple eigenvalue. 
\item[(iv)] If $\mu$ denotes the unique invariant probability measure for $\horo_t$, then $\mu\in \cB'_w$.
\item[(v)] The function $z\to \cR^{\omega}_z \in \cB$ admits a holomorphic extension to $\{z\in \C : \re(z) > h_{\topp}\}$ and a meromorphic extension to $\{z\in \C : \re(z) > h_{\topp}-\lambda  \}$.

\end{itemize}
\end{corollary}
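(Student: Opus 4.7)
The argument combines Hennion's theorem, the resolvent Lasota-Yorke estimates of Lemma~\ref{lem:LY-R}, and the compact embedding of Lemma~\ref{lem:useful}, augmented by a Perron-Frobenius-type argument to control the peripheral spectrum.

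For (ii), Hennion's theorem applies to $\cR_z^{(\omega)} \colon \cB \to \cB$: the unit ball of $\cB = \cB_{1,\alpha}$ embeds relatively compactly in $\cB_w = \cB_{0,1+\alpha}$ by Lemma~\ref{lem:useful}, and the Lasota-Yorke inequality \eqref{eq:LY2_R}, after estimating $\|\cdot\|_{\cB_w} \le \|\cdot\|_{\cB}$, displays the iterates of $\cR_z^{(\omega)}$ on $\cB$ as the sum of a contraction of ratio $|\re(z)-h_{\topp}+\lambda|^{-1}$ and a $\cB$-bounded, $\cB_w$-compact term. Hennion's theorem then delivers the stated essential spectral radius bound and, outside the essential disk, a discrete spectrum of finite-multiplicity eigenvalues. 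Via the spectral mapping $\mu \mapsto (z-\mu)^{-1}$ for resolvents, this reads as discreteness of the spectrum of $Z_\omega$ in the strip $\{\re(\mu) > h_{\topp}-\lambda\}$. The upper bound $|\re(z)-h_{\topp}|^{-1}$ in (i) is the immediate iteration of \eqref{eq:LY2_R} combined with $\|\cdot\|_{\cB_w}\le\|\cdot\|_{\cB}$.

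For (iii), the idea is a Perron-Frobenius argument on the positivity-preserving unweighted operator $\cL_t = \cL_t^{(0)}$: since $J_{-t}>0$, it preserves the cone of non-negative elements of $\cB$, and the bound \eqref{eq:LY1-L} together with the existence of the Margulis measure (Proposition~\ref{prop:Margulis}) shows that the spectral radius equals $e^{h_{\topp}t}$ and is realized by a non-negative eigenvector (the Margulis density), so that $h_{\topp}$ is an eigenvalue of $Z_0$. Simplicity of $h_{\topp}$ and the exclusion of other eigenvalues on the critical line follow from mixing of $\geo_t$ with respect to the measure of maximal entropy, equivalently from a Dolgopyat-type estimate on the imaginary axis. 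For twisted operators $\cL_t^{(\omega)}$ with $\omega\neq 0$, the unitary weight $e^{2\pi\imath F_{t,\omega}}$ in \eqref{eq:twisted} can only restore modulus $e^{h_{\topp}t}$ when the cocycle $F_{t,\omega}$ is cohomologous to a constant, so the peripheral set never exceeds $\{h_{\topp}\}$, and the matching lower bound in (i) is secured whenever this case arises.

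For (iv), we use that $\mu$ is the unique horocycle-invariant probability measure, whose conditional along strong-stable leaves is, up to a smooth density transverse to $X$ and $V$, precisely the one-dimensional Lebesgue measure along $U$ used to define the norm \eqref{eq:norm}; pairing this disintegration against $f\in\cC^r(M)$ yields $|\mu(f)|\le \sC\|f\|_{0,1+\alpha}$, hence $\mu$ extends continuously to $\cB_w$ by density. For (v), the integral representation \eqref{eq:R-formula} with $n=1$ converges in $\cB$-operator norm for $\re(z)>h_{\topp}$ by \eqref{eq:LY1-L}, delivering the holomorphic extension on $\{\re(z)>h_{\topp}\}$; the meromorphic extension to $\{\re(z)>h_{\topp}-\lambda\}$ follows because (ii) ensures $(z-Z_\omega)^{-1}$ exists away from the discrete set of eigenvalues of $Z_\omega$ in that strip, with poles of finite order matching the eigenvalue multiplicities. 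The principal difficulty is (iii): the absence of a genuine spectral gap for the semigroup forces us to analyze the peripheral spectrum through the resolvent, and pinning down the eigenvalue at $h_{\topp}$ (existence, simplicity, uniqueness on the critical line) requires either a cone-based Perron-Frobenius argument or a Dolgopyat-type inequality.
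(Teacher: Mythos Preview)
Your treatment of (i), (ii), and (v) is sound and essentially matches the paper's: the paper cites \cite[Prop.~2.10, Cor.~2.11]{Liv} for (i)--(iii) and \cite[Theorem~2]{But12} for (v), both of which package precisely the Hennion-via-resolvent-Lasota--Yorke argument you outline.

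There are, however, two genuine gaps.

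\textbf{Part (iii).} Your Perron--Frobenius argument is applied to the semigroup $\cL_t^{(0)}$ on $\cB$, but $\cL_t$ is \emph{not} quasi-compact on $\cB$ (this is exactly why the paper passes to the resolvent), so the standard Perron--Frobenius machinery does not apply. Moreover, ``the cone of non-negative elements of $\cB$'' is not well defined: elements of $\cB$ are distributions, not functions, and the natural positive cone coming from the embedding $\cB\hookrightarrow\cD_q$ is not invariant in any obviously useful way. The correct route (which is what Liverani does in \cite{Liv} and what this paper carries out explicitly in the twisted case in Proposition~\ref{prop:zero_is_simple}) is to work at the resolvent level, show that any peripheral eigenvector is in fact a Radon measure absolutely continuous with respect to $\vol$, and then run the positivity/coboundary argument on densities in $L^1(\vol)$, not in $\cB$. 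Your sketch ``mixing $\Rightarrow$ simplicity'' skips this reduction entirely.

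\textbf{Part (iv).} The claim that the conditional of $\mu$ transverse to the stable foliation has a \emph{smooth} density is false in variable curvature: $\mu$ is singular with respect to $\vol$ unless the curvature is constant (as the paper notes in \S\ref{sec:geo_horo_basics}), so no such smooth transverse density exists. The paper's argument avoids any structural description of $\mu$: it uses unique ergodicity to write $\mu(f)=\lim_{T\to\infty}T^{-1}\int_0^T f\circ\horo_s(x)\diff s$, the semi-invariance $\mu(f)=\mu(e^{-h_{\topp}t}\cL_t f)$ from \cite[Lemma~4.6]{AdBa}, and then bounds the horocycle integral of $\cL_t f$ by $\|f\|_{0,1+\alpha}$ directly from the definition of the norm, after chopping $[0,T]$ into segments of length $\rho$. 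Your disintegration idea could be repaired (the stable conditionals of $\mu$ \emph{are} Lebesgue by horocycle invariance, and finiteness of the transverse measure is all that is needed), but as written the smoothness assertion is incorrect.
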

\begin{proof}
Given Lemma \ref{lem:LY-R}, (i),(ii) and (iii) follow by \cite[Proposition 2.10, Corollary 2.11]{Liv}. Let us prove that $\mu \in \cB'_w$. First we note that, by unique ergodicity (see \cite{Marcus}) we have
\[
\mu(f)=\lim_{T\to \infty} \frac 1T \int f\circ \horo_s(x)\diff s.
\]
Moreover, by  \cite[Lemma 4.6]{AdBa}, for each $f\in \cC^0$, $\mu(f)=\mu(e^{-h_{\topp}t}\cL_tf)$. Next, dividing $[0,T]$ by $\frac{T}{\rho}$ intervals of length smaller than $\rho$, we have by \eqref{eq:LY1-L},

\[
 \left|\int_0^T (\cL_t f)\circ \horo_s(x)\diff s \right|\le {\rho^{-1}}{T}\|\cL_t f\|_{0,1+\alpha}\le C_\omega T e^{h_{\topp}t} \|f\|_{0,1+\alpha}.
 \]
 Therefore, 
\[
|\mu(f)|= |\mu(e^{-h_{\topp}t}\cL_tf)|=\left| e^{-h_{\topp}t} \lim_{T\to \infty} \frac 1T \int_0^T (\cL_t f)\circ \horo_s(x)\diff s \right|\le  C_\omega \|f\|_{0,1+\alpha},
\]
and $\mu \in \cB_w'$ concluding the proof of (iv).

Let us prove (v). The assertion on the holomorphic and meromorphic extensions follow from a straightforward adaptation of \cite[Theorem 2]{But12}. Indeed, the main ingredients are the bounds on the spectral and essential spectral radius of $\cR^{(\omega)}_z$ which, in our case, are given by (i) and (ii). The proof of the very same theorem also provides the following formula for the extension: for each $v \in \C$ such that $\re(v)>(a-h_{\topp})^{-1}$,
\[
\cR^{(\omega)}_{a+ib-v^{-1}}=\cR^{(\omega)}_{a+ib} [\mathbf{1}-v^{-1}\cR^{(\omega)}_{a+ib}]^{-1},
\]
which is meromorphic in the set $\{\re(v) > |a-h_{\topp}+\lambda |^{-1}\}$.
\end{proof}

\section{The spectral picture}\label{sec:spectral_picture}
%%%%%%%%%%%%%%%%%%%%%%%Spectral picture

\subsection{Spectral decomposition} 

We consider now the normalized operator
\[
\hat \cL_t^{(\omega)}:=e^{-h_{\topp}t}\cL_t^{(\omega)}, 
\]
for $t \ge 0$. A simple modification of the proof of Lemma \ref{lem:generator} shows that the family $\{\hat \cL_t^{(\omega)} \ : \ t \geq 0\}$ form a strongly continuous semigroup with generator
\[
\hat Z_\omega= -X+ \Phi^- -h_{\topp} + 2\pi \imath \langle \omega, X \rangle
\] 
and resolvent $\hat\cR^{(\omega)}_z=\cR^{(\omega)}_{z+h_{\topp}}$.

We can now establish the following spectral decomposition for the normalized operators.

 \begin{proposition}\label{prop:pert1}
 	There exists $\delta \in (0,h_{\topp})$ such that, for each $\omega \in \T^d$, there exist a finite set 
 	\[
 	\{z_j{(\omega)} \ :\ j=1, \dots,N_{\omega} \} \subset \{ z \in \C \ : \ -\delta < \re(z) \leq 0 \},
	\] 
	finite rank projectors $\Pi_{j,\omega}:\cB \to \cB$, nilpotent operators $\mathcal{N}_{j,\omega}:\cB \to \cB$, and a family of operators $t \mapsto Q_{\omega,t}$ satisfying $\Pi_{j,\omega} Q_{\omega,t}=Q_{\omega,t}\Pi_{j,\omega}=0$, $\Pi_{j,\omega}\Pi_{k,\omega} = \delta_{j \, k}$, and $\Pi_{j,\omega}\mathcal{N}_{j,\omega} = \mathcal{N}_{j,\omega} \Pi_{j,\omega}= \mathcal{N}_{j,\omega}$, such that
  \[
  \hat\cL^{(\omega)}_t= \sum_{j=1}^{N_{\omega}} e^{t \, z_j{(\omega)} } e^{t \, \mathcal{N}_{j,\omega}}\Pi_{j,\omega}+Q_{\omega,t}.
  \]
  Moreover, $Q_{\omega,t}$ satisfies the following: for each $p<\delta$, there exists $C_{p,\omega}>0$ such that, for each $f\in \operatorname{Dom}({\hat Z_{\omega}})$ and $t \ge 0$, 
\begin{equation}\label{eq:decay}
\|Q_{\omega, t}f\|_{\cB_w} \le C_{p,\omega} \, e^{-p t} \|{\hat Z_{\omega}} f\|_{\cB}.
\end{equation}
 \end{proposition}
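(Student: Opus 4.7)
The plan is to translate the resolvent-level spectral information from \Cref{prop:spectral} into a semigroup-level decomposition, following the Liverani--Butterley strategy \cite{Liv, But}. Writing $\hat\cR_z^{(\omega)} = \cR_{z+h_{\topp}}^{(\omega)}$ and recalling $\hat Z_\omega = Z_\omega - h_{\topp}$, \Cref{prop:spectral} gives that the spectrum of $\hat Z_\omega$ in the half-plane $\{\re(z) > -\lambda\}$ consists of isolated eigenvalues of finite algebraic multiplicity, with $0$ as the only point of the peripheral spectrum, and $z \mapsto \hat\cR_z^{(\omega)}$ extends meromorphically across $\{\re(z) > -\lambda\}$.

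\textbf{Step 1 (projectors and nilpotents).} Fix $\delta \in (0,\lambda)$ small enough that the strip $\{-\delta < \re(z) \leq 0\}$ contains only finitely many eigenvalues $z_1(\omega),\dots,z_{N_\omega}(\omega)$ of $\hat Z_\omega$, and for each $j$ let $\Gamma_j$ be a small positively oriented loop around $z_j(\omega)$ enclosing no other eigenvalue. Define the Riesz projectors and nilpotents in the standard way:
\[
\Pi_{j,\omega} = \frac{1}{2\pi i}\oint_{\Gamma_j} \hat\cR_z^{(\omega)}\,dz, \qquad \mathcal{N}_{j,\omega} = (\hat Z_\omega - z_j(\omega))\,\Pi_{j,\omega}.
\]
The Dunford functional calculus then gives at once the finite-rank, orthogonality and nilpotency relations stated in the proposition.

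\textbf{Step 2 (Bromwich representation and contour shift).} For $c>0$ and $f \in \operatorname{Dom}(\hat Z_\omega)$, the resolvent identity $\hat\cR_z^{(\omega)} f = z^{-1} f + z^{-1}\hat\cR_z^{(\omega)}\hat Z_\omega f$ combined with the weak-norm bound $\|\hat\cR_z^{(\omega)}\|_{\cB_w} \leq C_\omega/|\re(z)|$ from \Cref{lem:LY-R} ensures that the Bromwich integral
\[
\hat\cL_t^{(\omega)} f = \frac{1}{2\pi i}\int_{c-i\infty}^{c+i\infty} e^{tz}\hat\cR_z^{(\omega)} f\,dz
\]
converges absolutely in $\cB_w$. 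Using the meromorphic extension from \Cref{prop:spectral}(v), shift the contour to $\{\re(z) = -p\}$ with $p \in (0,\delta)$; the residue theorem picks up exactly the finitely many eigenvalues $z_j(\omega)$, and since $\mathcal{N}_{j,\omega}$ is nilpotent the residue at $z_j(\omega)$ equals $e^{tz_j(\omega)} e^{t\mathcal{N}_{j,\omega}}\Pi_{j,\omega}$. This yields
\[
\hat\cL_t^{(\omega)} f = \sum_{j=1}^{N_\omega} e^{tz_j(\omega)}\,e^{t\mathcal{N}_{j,\omega}}\,\Pi_{j,\omega} f + Q_{\omega,t} f,
\]
with $Q_{\omega,t} f$ defined as the integral along $\{\re(z)=-p\}$.

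\textbf{Step 3 (decay of the remainder).} On the shifted contour $|e^{tz}| = e^{-pt}$, which supplies the desired exponential factor. For the integrability in $|\im(z)|$, insert the resolvent identity once more into the integrand of $Q_{\omega,t}f$ and use \Cref{lem:LY-R} (analytically continued to $\{\re(z)=-p\}$ via the meromorphic extension and quasi-compactness) to get an integrand of order $|z|^{-2}\bigl(\|f\|_{\cB_w}+\|\hat Z_\omega f\|_{\cB}\bigr)$ uniformly for $\re(z)=-p$. This gives
\[
\|Q_{\omega,t} f\|_{\cB_w} \leq C_{p,\omega}\, e^{-pt}\, \|\hat Z_\omega f\|_{\cB},
\]
which is \eqref{eq:decay}; the extension to all $f \in \operatorname{Dom}(\hat Z_\omega)$ follows by density. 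The main obstacle in this program is precisely the integrability/decay of $\hat\cR_z^{(\omega)}$ along the vertical contour $\{\re(z)=-p\}$: because the semigroup is only partially hyperbolic and has no direct spectral gap, the necessary bounds in $|\im(z)|$ are not automatic and must be extracted from the resolvent Lasota--Yorke inequality in \Cref{lem:LY-R} through repeated use of the resolvent identity, which is the technical heart of the Liverani--Butterley method and the reason one works with the resolvent of the generator rather than with $\hat\cL_t^{(\omega)}$ directly.
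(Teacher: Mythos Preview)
Your outline follows the correct Liverani--Butterley template (Bromwich representation plus contour shift), but it has a genuine gap at the step you yourself flag as ``the main obstacle'': the control of $\hat\cR_z^{(\omega)}$ along the vertical line $\{\re(z)=-p\}$ for large $|\im(z)|$. You propose to obtain this ``from the resolvent Lasota--Yorke inequality in \Cref{lem:LY-R} through repeated use of the resolvent identity''. That does not work. The bound in \Cref{lem:LY-R} carries an explicit factor $|z|+1$ in front of the weak-norm term, so iterating it (or the resolvent identity) never produces uniform decay in $|\im(z)|$; and the meromorphic continuation in \Cref{prop:spectral}(v) gives no norm control whatsoever, only analyticity away from poles. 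In particular, quasi-compactness alone does not even guarantee that the eigenvalues of $\hat Z_\omega$ in $\{-\delta<\re(z)\le 0\}$ are \emph{finite in number}: they are isolated with finite multiplicity, but could accumulate at imaginary infinity, so your Step~1 already presupposes what you have not shown.

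What is actually needed --- and what the paper supplies --- is a Dolgopyat-type inequality (\Cref{prop:dolgopyat_inequality}): a bound of the form $\|[\hat\cR_{a+ib}^{(\omega)}]^{\tilde n}\|_{\cB}\le C_\omega(a+\nu)^{-\tilde n}$ for $|b|$ large, with $\tilde n\sim\log|b|$. This uses oscillatory cancellation coming from the contact structure and the one-dimensionality of the stable/unstable bundles; it is not a consequence of the Lasota--Yorke inequality. With this in hand, the paper invokes Butterley's theorem \cite{But}, checking the remaining hypotheses: (A2) is the essential-spectral-radius bound from \Cref{prop:spectral}, and (A1) is the estimate $\|\hat\cL_t^{(\omega)}-\mathrm{Id}\|_{\cB\to\cB_w}\le C_\omega t$, which the paper proves directly and which you also omit. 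The Dolgopyat inequality replaces Butterley's assumption (A3) and is what makes the contour shift legitimate and the eigenvalue set in the strip finite; without it your Steps~1--3 cannot be completed.
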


One key ingredient to prove the result is the following \emph{Dolgopyat's Inequlity}. 

\begin{proposition}[Dolgopyat's Inequlity]\label{prop:dolgopyat_inequality}
	There exist $\beta,\nu, c >0$ and $C_\omega >0$ such that, for each $z=a+ib$ with  $|b|>\beta$ and $a>0$, 
	\begin{equation}\label{eq:dolg}
		\|[\cR^{(\omega)}_{a+ib+h_{\topp}}]^{\tilde n} \|_{\cB} \le C_\omega (a+\nu)^{-\tilde n}, \qquad \text {where} \qquad \tilde n= \lfloor \gamma \log |b| \rfloor
	\end{equation}
	for some $\gamma \in (0, c/\log(1+\nu/a))$.
\end{proposition}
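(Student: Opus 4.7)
The plan is to exploit the oscillatory nature of the Laplace kernel $e^{-ibt}$ appearing in the integral representation \eqref{eq:R-formula} of the iterated resolvent, extracting a definite gain beyond the trivial bound supplied by Lemma \ref{lem:LY-R}. Starting from
$$[\cR^{(\omega)}_{a+ib+h_{\topp}}]^n f = \frac{1}{(n-1)!} \int_0^\infty t^{n-1} e^{-(a+h_{\topp})t} e^{-ibt} \cL^{(\omega)}_t f \diff t,$$
I would estimate $\|[\cR^{(\omega)}_{a+ib+h_{\topp}}]^{\tilde n} f\|_{\cB}$ by testing against a $\cC^{1+\alpha}$ function $\varphi$ on a stable segment $I \in \cI_\rho$ and a derivation $v^j$ with $v \in \{X,V\}$, $j \leq 1$, as in the definition \eqref{eq:norm}. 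The time integration would be split into a short-time portion $[0,T_0]$ and a long-time portion $[T_0,\infty)$, with $T_0$ large but independent of $\tilde n$; the short-time portion is absorbed by the Lasota--Yorke bound. For the long-time part, the renormalization change of variables already exploited in the proof of Lemma \ref{lem:LY-L} rewrites $\int_I \varphi \cdot v^j \cL^{(\omega)}_t f \diff U$ as an integral along $\geo_{-t}(I)$, cut into pieces of size $\rho$ by the partition of unity $\{\psi_j\}$, with weights involving $G_{t,\omega} \cdot J_{-t}$ and the test function pushed forward under $\geo_t$.

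The crucial step is an oscillatory cancellation on small time windows of length $\sim |b|^{-1}$. After a van der Corput or stationary-phase argument, one gains a factor $|b|^{-\eta}$ per application of $\cL^{(\omega)}_t$, provided the combined phase has a non-degenerate temporal distance function. This non-degeneracy is furnished by the contact structure of the geodesic flow, the contact form giving a quantitative lower bound for the distance between nearby stable/unstable orbits when they are moved around by the flow. Since the stable direction is one-dimensional, the temporal distance function is controlled by a single Lyapunov-type derivative and no bunching condition is needed, as pointed out in the remark following Theorem \ref{thm:main_result_3}. The twist phase $2\pi \int_0^t \langle \omega, X\rangle \circ \geo_{-a}\diff a$ inside $G_{t,\omega}$ contributes only a $\cC^2$ bounded perturbation and does not destroy non-degeneracy, although its $\cC^2$-size enters the bound via $C_\omega$.

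Iterating the per-step gain $\tilde n = \lfloor \gamma \log|b|\rfloor$ times turns the logarithmic number of factors into a polynomial gain $|b|^{-\gamma \eta}$, which for $\gamma$ in the stated range and $|b|>\beta$ can be absorbed into the replacement of the trivial $a^{-\tilde n}$ bound by $(a+\nu)^{-\tilde n}$, giving the claimed estimate. The main obstacle, and the most delicate part, is the quantitative implementation of the oscillatory cancellation: one must verify that the temporal distance function is non-degenerate with explicit, uniform lower bounds along the flow, and must ensure that the constants $\beta,\nu,c,\gamma$ can be chosen independently of $\omega$, with all $\omega$-dependence funneled into the multiplicative constant $C_\omega$. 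The blueprint is the argument of \cite[Proposition 7.5]{GiLiPo} adapted to continuous time and to the twisted weighted operator $\cL^{(\omega)}_t$ studied here.
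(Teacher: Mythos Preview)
Your outline captures the general spirit of a Dolgopyat estimate, but it diverges from the paper's route and omits a structural step that is not optional. The paper first introduces the weighted norm $\|f\|_{\cB}^{\dagger}=\max\{\|f\|_{\cB_w},|z|^{-1}\|f\|_{\cB}\}$ and, via the Lasota--Yorke inequality \eqref{eq:LY2_R}, reduces the strong-norm bound \eqref{eq:dolg} to a weak-norm bound of the form $\|[\cR^{(\omega)}_{a+ib+h_{\topp}}]^{\tilde n} f\|_{\cB_w}\le C_\omega(a+\tilde\nu)^{-\tilde n}\|f\|_{\cB}^{\dagger}$. The oscillatory mechanism operates naturally only on integrals against $\cC^{1+\alpha}$ test functions (the weak norm); the extra derivative in the strong norm is handled by this reduction, not by the cancellation itself. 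Your proposal jumps directly to bounding $\|\cdot\|_{\cB}$ and never mentions this step.

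The second difference concerns the twist. Rather than carrying $G_{t,\omega}$ through the oscillatory analysis as an additional phase, the paper localizes in time (scale $t_*$) and in space (scale $\epsilon$), uses the $\cC^1$ bounds of Proposition~\ref{prop:G} to replace both $G_{t,\omega}\circ\geo_{\ell t_*}$ and $J_{-s}$ by their averages on balls of radius $\epsilon$, and thereby reduces the integrand to one involving only the \emph{untwisted} composition $L^{(0)}_s f=f\circ\geo_{-s}$. The remaining estimate is then literally the Dolgopyat inequality for the classical transfer operator, for which the paper cites \cite[\S5.7]{DeKiLi}. Your plan to keep the twist inside the phase and redo the full argument is viable in principle but considerably more work, and your remark that the twist ``contributes only a $\cC^2$ bounded perturbation'' is exactly what the paper exploits---but by averaging it away rather than absorbing it into the temporal-distance analysis.

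Finally, the phrasing ``gains a factor $|b|^{-\eta}$ per application of $\cL^{(\omega)}_t$'' and ``iterating the per-step gain $\tilde n$ times'' is misleading in this setting: the power $[\cR^{(\omega)}_{z}]^{\tilde n}$ is represented by a \emph{single} time integral \eqref{eq:R-formula}, and the oscillatory cancellation is extracted once from that integral. The role of $\tilde n=\lfloor\gamma\log|b|\rfloor$ is that this single gain (polynomial in $|b|^{-1}$) must compensate the constant $C_\omega$ accumulated over $\tilde n$ Lasota--Yorke steps, which is how $(a+\nu)^{-\tilde n}$ replaces $a^{-\tilde n}$.
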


The proof of \Cref{prop:dolgopyat_inequality} is postponed to \S\ref{sec:dolgopyat_inequality}. We now use it to prove \Cref{prop:pert1}.

\begin{proof}[Proof of \Cref{prop:pert1}]
The statement is a consequence of \cite[Theorem 1]{But} applied to the resolvent $\mathscr{R}_{z+h_{\topp}}^{(\omega)}$, once we verify the corresponding assumptions.
More precisely, we replace Assumption (A3) in Butterley's Theorem with the slightly weaker Dolgopyat's Inequality in \Cref{prop:dolgopyat_inequality}: in doing so, the information we obtain on the resonances is limited to a possibly smaller strip $\{ z \ : \ -\delta < \re(z) \leq 0 \}$ (rather than $\{ z \ : \ -\lambda < \re(z) \leq 0 \}$), which however is sufficient for our purposes.

The remaining two assumptions to verify are
\begin{itemize}
\item[(A1)] There exists $C_\omega>0$ such that $t^{-1}\|\hat \cL_t^{(\omega)}- \operatorname{Id}\|_{\cB \to \cB_w}\le C_\omega$ for all $t \ge 0$,
{
where
\[
\|\hat \cL_t^{(\omega)}- \operatorname{Id}\|_{\cB \to \cB_w}=\sup_{\substack{f\in \cB \\ \|f\|_{\cB}\le 1} }\|\hat \cL_t^{(\omega)} f-f\|_{\cB_w}.
\]
}
\item[(A2)] {  The essential spectral radius of $\hat\cR_{z}^{(\omega)}: \cB\to \cB$ is not greater than $(\Re(z) + \lambda)^{-1}$ for all $\Re(z) >0$.}\\
\end{itemize}
Condition (A2) follows  from Corollary \ref{prop:spectral} applied to $\mathcal{R}^{(\omega)}_{z+h_{\topp}}$. 
It remains to prove (A1). 

Let $f\in \cB$ with $\|f\|_{\cB}\le 1$, $\varphi\in \cC^{1+\alpha}, \|\varphi\|_{\cC^{1+\alpha}}\le 1$ and $I\in \cI_\rho$, then
\[
\begin{split}
\int_0^\rho (\hat\cL^{(\omega)}_t f-f)\varphi \diff U=&\int_0^\rho \int_0^t \partial_s(\hat\cL^{(\omega)}_s f)\varphi \diff U\diff s\\
=&e^{-h_{\topp}t} \int_0^\rho \int_0^t \partial_s(G_{s,\omega}J_{-s}f\circ \geo_{-s})\varphi \diff U \diff s\\
=& e^{-h_{\topp}t}\int_0^\rho \int_0^t \partial_s(G_{s,\omega}J_{-s})f\circ \geo_{-s}\varphi \diff U \diff s\\
&+ e^{-h_{\topp}t}  \int_0^\rho \int_0^t (G_{s,\omega}J_{-s}) \partial_s(f\circ \geo_{-s})\varphi\diff U \diff s.
\end{split}
\]
The integral in the penultimate line above is bounded by $C_\omega e^{h_{\topp}t } \|f\|_{0,1+\alpha}$ (indeed, recall \eqref{eq:GJ} and the computation in the proof of Lemma \ref{lem:LY-L}). For the integral in the last line  we can argue as in \eqref{eq:int1} and \eqref{eq:int2} to obtain 
\[
\begin{split}
&\int_0^\rho \int_0^t  \partial_s(f\circ \geo_{-t}) G_{s,\omega}J_{-s}\diff U \diff s \\
& \quad =\int_0^t \int_0^\rho  \partial_u(f\circ \geo_{-u})_{|_{u=0}}\circ g_{-s} G_{s,\omega}J_{-s}\diff U \diff s\\
&\quad\le \sC e^{h_{\topp}t}\int_0^t  \sup_{I_j}\left| \int_{\eta_j}^{\eta_j+\rho}  (\varphi G_{s,\omega})\circ \geo_s\circ \horo_\eta\circ \psi_j(\eta) \cdot \partial_u(f\circ \geo_{-u})_{|_{u=0}}\circ \horo_\eta \circ \psi_j(\eta)\diff \eta \right|\diff s\\
&\quad\le  C_\omega t e^{h_{\topp}t} \|f\|_{1,\alpha},
\end{split}
\]
since $f\in \cB$. Taking the sup over $\varphi \in \cC^{1+\alpha}, \|\varphi\|_{\cC^{1+\alpha}}\le 1, I\in \cI_\rho$, we have thus proved that
\[
\|\hat\cL^{(\omega)}_t f-f\|_{\cB_w} \le C_\omega t \|f\|_{\cB}
\]
which proves (A1) and completes the proof.
\end{proof}

\subsection{Dolgopyat's inequality}\label{sec:dolgopyat_inequality}

In this section we are proving \Cref{prop:dolgopyat_inequality}. Specifically, we are going to reduce the problem to the analogous estimate for the untwisted transfer operator and refer to Liverani's paper \cite{Liv} (actually, to \cite[Chapter 5]{DeKiLi}, which contains an expanded argument) for the conclusion of the proof.

We need to prove that  there exist $\beta >0$ (which will be chosen large), $\nu >0$ and $C_\omega, c >0$ such that, for each $z=a+ib$ with  $|b|>\beta$ and $a>0$, 
\begin{equation}\label{eq:dolg}
\|[\cR^{(\omega)}_{a+ib+h_{\topp}}]^{\tilde n} \|_{\cB} \le C_\omega (a+\nu)^{-\tilde n}, \qquad \text {where} \qquad \tilde n= \lfloor \gamma \log |b| \rfloor
\end{equation}
for some $\gamma \in (0, c/\log(1+\lambda/a))$.

In order to exploit inequality \eqref{eq:LY2_R} it is convenient to introduce the weighted norm on $\cB$,
\[
\|f\|_{\cB}^{\dagger}:=\max \{\|f\|_{\cB_w}, |z|^{-1}\|f\|_{\cB}\}.
\]
Since $|z|>1$ (taking $\beta>1$), we have $\|f\|_{\cB}^{\dagger}\le \|f\|_{\cB}\le |z|\|f\|^\dagger_{\cB}$, the norms $\|f\|_{\cB}^{\dagger}$ and $\|f\|_{\cB}$ are equivalent, and it is sufficient to prove \eqref{eq:dolg} for $\|\cdot\|^\dagger$. 
Moreover, if $\|\cR^{(\omega)}_{a+ib+h_{\topp}} f \|_{\cB}^\dagger =|z|^{-1}\|\cR^{(\omega)}_{a+ib+h_{\topp}} f\|_{\cB}$,  by \eqref{eq:LY2_R}
\[
\begin{split}
a^n\|[\cR^{(\omega)}_{a+ib+h_{\topp}}]^n f \|_{\cB}^\dagger &\le \frac{C_\omega}{|z|} \left(\frac{a}{a+\lambda}\right)^n \|f\|_{\cB}+ C_\omega (1+|z|^{-1})\|f\|_{\cB_w}\\
&=C_\omega \left({1+\lambda/a}\right)^{-n}  \|f\|_{\cB}^{\dagger}+C_\omega \|f\|_{\cB_w}.
\end{split}
\]
Therefore, choosing $n_0$ and $\delta \in (0,1)$ such that $C_\omega(1+\lambda/a)^{-n}< \delta^n$, for each $n \ge n_0$, we have
\begin{equation}\label{eq:LYdag}
\|[\cR^{(\omega)}_{a+ib+h_{\topp}}]^n f \|_{\cB}^\dagger \le (\delta a^{-1})^{n} \|f \|_{\cB}^\dagger+C_\omega a^{-n}\|f\|_{\cB_w}, \qquad \forall n \ge n_0.
\end{equation}
%Since each $\tilde n \in \N$ can be written as $\tilde n=n_0k+m$, with $m<k$, and the term in front of the norm $\|\cdot\|_{\cB}^\dagger$ in the above inequality is a contraction (provided $n_0$ is large enough), it will be sufficient to prove that there exists $\tilde \nu >0$ such that
In order to prove \eqref{eq:dolg}, writing $\tilde n = 2n$ and using \eqref{eq:LYdag}, it is sufficient to show that there exists $\tilde \nu >0$ such that
\begin{equation}\label{eq:wdolg}
\|[\cR^{(\omega)}_{a+ib+h_{\topp}}]^{\tilde n} f \|_{\cB_w} \le C_\omega (a+\tilde \nu)^{-\tilde n} \|f\|^\dagger_{\cB} \quad \forall f \in \cB.
\end{equation}
Thus, for $f\in \cC^r, \varphi \in \cC^{1+\alpha}(I), \|\varphi\|_{\cC^{1+\alpha}}\le 1$, we want to bound
\begin{equation}\label{eq:integral1}
\left| \int_I \varphi[\cR^{(\omega)}_{z+h_{\topp}}]^n f \diff U\right|.
\end{equation}

 The rough idea to estimate the above integral is the following: thanks to the computations provided in Appendix \ref{sec:app-G}, which offer an estimate of the ${\cC}^{1+\alpha}$-norm of the weight $G_{t,\omega}$ along specific directions, we are able to partition our integral into horocycle segments. This partitioning allows us to apply here the same argument outlined in \cite[Section 5.7]{DeKiLi} to prove the analogous estimate \eqref{eq:wdolg} for the classical transfer operator. As pointed out in the aforementioned book, it is enough to get an estimate in term of the $\cC^1$ norm of $f$ and then use the mollifiers in \cite[Lemma 5.3]{DeKiLi} to get the desired bound. The critical factors enabling this are the renormalization provided by Lemma \ref{lem:renormalization} and the one-dimensionality of the stable and unstable manifolds. %\marginnoter{Comment on the non-necessary bunching condition?} 
 
 For the beginning of the proof we follow closely  \cite[Section 5.7]{DeKiLi}.
 
\textit{Step I (Localizing in time)}. Let $t_*>0$ small (which corresponds to $\tau$ in \cite{DeKiLi}) to be chosen later depending on $|b|^{-1}$. Let $\tilde p:\R \to \R$ be an even function supported on $(-1,1)$ with a single maximum at $0$, satisfying $\sum_{\ell \in \Z} \tilde p(t-\ell)=1$ for any $t \in \R$. Define $p(s)=\tilde p(s/t_*)$ and write 
\begin{multline*}
 [\cR_{z+h_{\topp}}^{(\omega)}]^n f=\int_0^{\infty}  \frac{t^{n-1}}{(n-1) !} e^{-z t} \hat \cL^{(\omega)}_t f \diff t= \int_0^{t_*} p(s) \frac{s^{n-1}}{(n-1) !} e^{-z s}\hat\cL^{(\omega)}_s f \diff s \\
 + \sum_{\ell \in \mathbb{N}_0} \int_{-{t_*}}^{t_*} p(s) \frac{(s+\ell {t_*})^{n-1}}{(n-1) !} e^{-z(s+\ell {t_*})} \hat\cL^{(\omega)}_{\ell {t_*}+s}f \diff s.
\end{multline*}
It is convenient to introduce the notation
\[
\begin{split}
&p_{n, \ell, z}(s):=p(s) \frac{(s+\ell {t_*})^{n-1}}{(n-1) !} e^{-z(s+\ell {t_*})}, \text { for } \ell \geqslant 1, \\
&p_{n, 0, z}(s):=p(s) \frac{s^{n-1}}{(n-1) !} e^{-z s} {\chi}_{s \geqslant 0}
\end{split}
\]
where $\chi_{A}$ is the indicator function of the set $A$. Therefore, the integral \eqref{eq:integral1} becomes
\begin{equation}\label{eq:integral2}
\int_I \varphi[\cR^{(\omega)}_{z+h_{\topp}}]^n f \diff U=  \sum_{\ell \in \mathbb{N}} \int_{-{t_*}}^{t_*} p_{n, \ell, z}(s) \int_I \varphi \hat\cL^{(\omega)}_{\ell {t_*}+s}f \diff U \diff s.
\end{equation}

\textit{Step II (Chopping the spatial integral)}. Note that, by \eqref{eq:TO},
\[
\hat\cL^{(\omega)}_{\ell {t_*}+s}f= e^{-h_{\topp}(\ell {t_*}+s)} \cdot G_{\ell {t_*}+s, \omega} \cdot J_{-(\ell {t_*}+s)} \cdot L^{(0)}_{\ell t_*}(L^{(0)}_{s}f),
\]
where $L^{(0)}_\xi f:=f\circ \geo_{-\xi}$.
Therefore, for each $\ell\in \N$ and $s\in (-t_*,t_*)$,
\begin{equation}\label{eq:integral5}
\begin{split}
% \int_I \varphi \hat\cL^{(\omega)}_{\ell {t_*}+s}f \diff U&= \int_I \varphi  \hat \cL^{(\omega)}_{\ell t_*+s} 1 \cdot L^{(0)}_{\ell t_*}(L^{(0)}_{s}f) \diff U\\
% &= \int_I \varphi\circ \geo_{\ell t_*}  \hat \cL^{(\omega)}_{\ell t_*+s} 1 \circ \geo_{\ell t_*} \cdot (L^{(0)}_{s}f) J_{\ell t_*} \diff U\\
% &=e^{-h_{\topp}t}\int_I ([\varphi\cdot G_{\ell t_*+s,\omega}]\circ \geo_{\ell t_*})\circ \horo_u \cdot J_{-(\ell t_*+s)}\circ \horo_u \cdot (L^{(0)}_{s}f)\circ \horo_u \diff u.
\int_I \varphi \hat\cL^{(\omega)}_{\ell {t_*}+s}f \diff U&=
e^{-h_{\topp}(\ell {t_*}+s)} \int_0^{\rho} (\varphi\cdot  G_{\ell {t_*}+s, \omega}) \circ \horo_u \cdot J_{-(\ell {t_*}+s)} \circ \horo_u  \cdot (L^{(0)}_{s}f) \circ \geo_{\ell t_*}\circ \horo_u\diff u.
\end{split}
\end{equation}
We observe that, directly from its definition \eqref{eq:potential}, we can write
\[
J_{-(\ell {t_*}+s)} = J_{-\ell {t_*}} \cdot J_{-s} \circ \geo_{-\ell {t_*}};
\]
in particular, Lemma \ref{lem:renormalization} and the usual change of variables $\eta = \tau(u, -\ell t_*, x)$ give us
\[
\begin{split}
	\int_I \varphi \hat\cL^{(\omega)}_{\ell {t_*}+s}f \diff U&=
	e^{-h_{\topp}(\ell {t_*}+s)} \int_0^{\tau(\rho, -\ell t_*, x)} (\varphi\cdot  G_{\ell {t_*}+s, \omega}) \circ \geo_{\ell t_*} \circ \horo_{\eta} \cdot J_{-s} \circ \horo_{\eta}  \cdot (L^{(0)}_{s}f) \circ \horo_{\eta}\diff \eta.
\end{split}
\]
Introducing the partition of unity $\{\psi_j\}_j$ supported on $g_{-\ell t_*}(I)$ used in \eqref{eq:int2}, we get
\[
\int_I \varphi \hat\cL^{(\omega)}_{\ell {t_*}+s}f \diff U =
e^{-h_{\topp}(\ell {t_*}+s)}  \sum_{j} \int_{I_j} [(\varphi\cdot  G_{\ell {t_*}+s, \omega}) \circ \geo_{\ell t_*} \cdot J_{-s}]\circ \horo_{\eta} \circ \psi_j(\eta) \cdot (L^{(0)}_{s}f) \circ \horo_{\eta}\circ \psi_j(\eta) \diff \eta.
%\begin{split}
%&\int_I ([\varphi\cdot G_{\ell t_*+s,\omega}]\circ \geo_{\ell t_*})\circ \horo_u \cdot J_{-(\ell t_*+s)}\circ \horo_u \cdot (L^{(0)}_{s}f)\circ \horo_u \diff u=\\
%&=\int_0^{\tau(\rho, -(\ell t_*+s), x)} ([\varphi\cdot G_{\ell t_*+s,\omega}]\circ \geo_{\ell t_*})\circ \geo_{\ell t_*+s}\circ \horo_\eta\circ \geo_{-(\ell t_*+s)} \cdot (L^{(0)}_{s}f)\circ \horo_\eta \circ \geo_{-(\ell t_*+s)} \diff \eta\\
%&= \sum_{j} \int_{I_j} ([\varphi\cdot G_{\ell t_*+s,\omega}]\circ \geo_{\ell t_*})\circ \geo_{\ell t_*+s}\circ \horo_\eta\circ\psi_j(\eta) \cdot (L^{(0)}_{s}f)\circ \horo_\eta\circ \psi_j(\eta) \diff \eta.
%\end{split}
\]

%\textit{Step III (Localizing in space)}. Letting $\epsilon>0$ small (which corresponds to $r$ in \cite{DeKiLi}) to be chosen later depending on $|b|^{-1}$, and possibly using another partition of unity\footnote{See e.g the partition of unity used in \cite[(5.7.7)]{DeKiLi}.} $\{\phi_{i,\eps}\}_i$  supported on a ball of radius $\eps$, in the above integral we can consider points at distance smaller than $\eps$, let us say the integral is over $I_{i,j,\eps}$ of length $\eps$.  We then get
%\begin{multline}\label{eq:integral3}
%	\int_I \varphi \hat\cL^{(\omega)}_{\ell {t_*}+s}f \diff U =
%	e^{-h_{\topp}(\ell {t_*}+s)} \\ \sum_{i} \sum_{j} \int_{I_{i,j,\eps}} \phi_{i,\eps} \, [(\varphi\cdot  G_{\ell {t_*}+s, \omega}) \circ \geo_{\ell t_*} \cdot J_{-s}]\circ \horo_{\eta} \circ \psi_j(\eta) \cdot (L^{(0)}_{s}f) \circ \horo_{\eta}\circ \psi_j(\eta) \diff \eta.
%\end{multline} 

%%%%%%%STEP III FIXED

\textit{Step III (Localizing in space)}. Letting $\epsilon>0$ small  (which corresponds to $r$ in \cite{DeKiLi}) to be chosen later depending on $|b|^{-1}$, we choose a partition of unity $\{\phi_{i,\eps}\}_i$ subordinated to a covering by balls of radius $\eps$, with uniformly bounded multiplicity, so that
\[
\supp(\phi_{i,\eps}) \subset B(x_i,\eps),\qquad
\|\phi_{i,\eps}\|_{\mathscr C^0}\le C_\sharp,\qquad
\|\nabla \phi_{i,\eps}\|_{\mathscr C^0}\le C_\sharp \eps^{-1},
\]
and the number of such balls intersecting a fixed compact set is bounded by $C_\sharp \eps^{-3}$.
Accordingly, in the above integral we may further decompose each interval $I_j$ into pieces
$I_{i,j,\eps}$ of length comparable with $\eps$, and write
\begin{multline}\label{eq:integral3}
	\int_I \varphi \hat\cL^{(\omega)}_{\ell {t_*}+s}f \diff U =
	e^{-h_{\topp}(\ell {t_*}+s)} \\ \sum_{i} \sum_{j} \int_{I_{i,j,\eps}} \phi_{i,\eps} \, [(\varphi\cdot  G_{\ell {t_*}+s, \omega}) \circ \geo_{\ell t_*} \cdot J_{-s}]\circ \horo_{\eta} \circ \psi_j(\eta) \cdot (L^{(0)}_{s}f) \circ \horo_{\eta}\circ \psi_j(\eta) \diff \eta.
\end{multline}

%%%%%%%%%%%%

\textit{Step IV (Reducing to the classical transfer operator)}. 
We know we can bound $\|J_{-s} \|_{\mathscr{C}^0} \leq C_\sharp e^{\lambda s}$ and $\|J_{-s} \circ \horo_{(\cdot)}\|_{\mathscr{C}^1} \leq C_\sharp e^{2\lambda s}$. Similarly, using \Cref{prop:G}, we can also bound
\[
\|G_{\ell {t_*}+s, \omega} \circ \geo_{\ell t_*}  \circ \horo_{(\cdot)}\|_{\mathscr{C}^1} \leq \|G_{\ell {t_*}+s, \omega} \circ \geo_{\ell t_*+s}  \circ \horo_{(\cdot)}\|_{\mathscr{C}^1} \cdot \|\tau(\cdot, -s,x)\|_{\mathscr{C}^1} \leq C_\sharp e^{\lambda s};
\]
so that
\[
\|(\varphi\cdot  G_{\ell {t_*}+s, \omega}) \circ \geo_{\ell t_*}\circ \horo_{(\cdot)}\|_{\mathscr{C}^{1}} \leq C_\sharp \|\varphi \circ \geo_{\ell t_*} \circ \horo_{(\cdot)}\|_{\mathscr{C}^{\alpha}}e^{\lambda s} \leq C_\sharp \|\varphi \circ \horo_{(\cdot)}\|_{\mathscr{C}^{1}}e^{\lambda s} \leq C_\sharp e^{\lambda s}.
\]
Let $\overline J_{-s} $ and $\overline \varphi_{t,\omega}$ be the average value of, respectively, $J_{-s}$ and $(\varphi\cdot  G_{\ell {t_*}+s, \omega}) \circ \geo_{\ell t_*}$ over $I_{i,j,\eps}$. The previous computations tell us that 
\[
\|[(\varphi\cdot  G_{\ell {t_*}+s, \omega}) \circ \geo_{\ell t_*} \cdot J_{-s}]\circ \horo_{(\cdot)} - {\overline \varphi_{t,\omega}} \, {\overline J_{-s}}\|_{\mathscr{C}^0} \leq C_\sharp \varepsilon e^{2\lambda s}.
\]
%%%%%%%%%%%%%%%%%MORE DETAILS

Since the interval $I_{i,j,\eps}$ has length comparable with $\eps$, the above $\mathscr C^0$ estimate implies that, on each such interval, replacing
\[
[(\varphi\cdot  G_{\ell {t_*}+s, \omega}) \circ \geo_{\ell t_*} \cdot J_{-s}]\circ \horo_{\eta}
\]
by its average value ${\overline \varphi_{t,\omega}}\,{\overline J_{-s}}$ produces an error bounded by
\[
C_\sharp \eps e^{2\lambda s}\|f\|_\infty.
\]
Since $|s|\le t_*$ and $t_*$ will be chosen small, we can absorb the factor $e^{2\lambda s}$ into a uniform constant (or, more precisely, write it as $1+\mathcal O(\lambda t_*)$). Therefore
\[
\int_{I_{i,j,\eps}} \phi_{i,\eps}\,
[(\varphi\cdot  G_{\ell {t_*}+s, \omega}) \circ \geo_{\ell t_*} \cdot J_{-s}]\circ \horo_{\eta} \circ \psi_j(\eta)\,
(L^{(0)}_{s}f) \circ \horo_{\eta}\circ \psi_j(\eta)\diff \eta
\]
equals
\[
{\overline \varphi_{t,\omega}}\,{\overline J_{-s}}
\int_{I_{i,j,\eps}} \phi_{i,\eps}\,
(L^{(0)}_{s}f) \circ \horo_{\eta}\circ \psi_j(\eta)\diff \eta
+\mathcal O_\omega\!\left(\eps (1+\lambda t_*)\|f\|_\infty\right).
\]

%%%%%%%%%%%%%%%%%
Furthermore, since $\| \varphi\cdot  G_{\ell {t_*}+s, \omega} \|_{\mathscr{C}^0} \leq 1$, from \eqref{eq:integral3}, we obtain
\begin{multline}\label{eq:integral4}
\int_I \varphi \hat\cL^{(\omega)}_{\ell {t_*}+s}f \diff U =
e^{-h_{\topp}(\ell {t_*}+s)} \\ \sum_{i} \sum_{j} \left( {\overline J_{-s}} \int_{I_{i,j,\eps}} \phi_{i,\eps} \, \left[ (L^{(0)}_{s}f) \circ \horo_{\eta}\circ \psi_j(\eta)  +\cO_\omega(\eps (1+ {\lambda t_*})\|f\|_{\infty}) \right] \diff \eta \right),
\end{multline}
where $\cO_\omega$ may include constants of the type $C_\omega$. 

The number of summands in $j$ is proportional to $e^{h_{\topp} \ell {t_*}}$ (recall \eqref{eq:integral5}), and ${\overline J_{-s}} = \cO(e^{h_{\topp}s})$. 
Gathering all the above into \eqref{eq:integral2}, and using the fact that  
\[
\left|\sum_{\ell \in \mathbb{N}_0} \int_{-{t_*}}^{t_*} p(s) \frac{(s+\ell {t_*})^{n-1}}{(n-1) !} e^{-z(s+\ell {t_*})}\diff s\right|\le a^{-n},
\]
we have obtained
\[
\begin{split}
\int_I \varphi[\cR^{(\omega)}_{z+h_{\topp}}]^n f \diff U = & \sum_{\ell \in \mathbb{N}}\sum_{i}\sum_{j} \int_{-{t_*}}^{t_*} p_{n, \ell, z}(s)e^{-h_{\topp}(\ell t_*+s)} {\overline J_{-s}} \int_{I_{i,j,\eps}}\phi_{\eps,i} (L^{(0)}_{s}f)\circ \horo_\eta\circ \psi_j(\eta) \diff \eta \diff s\\
&+\cO_\omega(\eps {t_*} \, a^{-n}\|f\|_\infty).
\end{split}
\]
We have finally reduced the problem to the setting of \cite[\S5.7]{DeKiLi}: since $L^{(0)}_s$ is the classical transfer operator for the geodesic flow, the above integral can be estimated proceeding exactly\footnote{Note that in the integral \cite[(5.7.8)]{DeKiLi} the presence of the Jacobian $J_{\ell,j,i}$ is replaced here by $e^{-h_{\topp}t} \cdot {\overline J_{-s}}$. As we remarked before, recall the fact that the manifolds $I_j$ grow with a volume (length) proportional to $e^{h_{\topp} \ell t_*}$ and ${\overline J_{-s}} \cdot e^{h_{\topp}s} = \cO(1)$.} as in \cite{DeKiLi}, starting from integral \cite[(5.7.8)]{DeKiLi} and choosing $\eps$ and $t_*$ opportunely small with respect to $|b|^{-1}$, giving the desired estimate.

%%%%%MORE DETAILS%%%%%%%%

\begin{remark}
Let us stress why the error term in the above proof is of order $\eps t_*$. In \cite[\S5.7]{DeKiLi}, the corresponding localization error is estimated in terms of the H\"older exponent of the anisotropic norm and is therefore of size $r^\alpha$ (with $0<\alpha<1$). In our setting, thanks to the estimates from Appendix~\ref{sec:app-G} and to the one-dimensionality of the stable/unstable directions, the coefficient
\[
[(\varphi\cdot  G_{\ell {t_*}+s, \omega}) \circ \geo_{\ell t_*} \cdot J_{-s}]\circ \horo_{(\cdot)}
\]
is controlled in $\mathscr C^1$ uniformly for $|s|\le t_*$; hence, since the intervals are of size $\eps$, this yields an error of order $\eps$ rather than $\eps^\alpha$. After inserting this estimate into \eqref{eq:integral2} and integrating over the time window $(-t_*,t_*)$, one gains the additional factor $t_*$, which leads to the remainder
\[
\mathcal O_\omega(\eps t_*\, a^{-n}\|f\|_\infty).
\]
Thus the stronger remainder comes from the extra $\mathscr C^1$ control available in the present setting, not from a modification of the Dolgopyat cancellation argument.
\end{remark}

%%%%%%%%%%%

%%%%%%%%%%%%%%%%%A Guivarc'h-Nagaev approach%%%%%%%%%%%%%%%%%
\subsection{A Guivarc'h-Nagaev approach} In this section we are going to use a spectral argument, originally due to Guivarc'h and Nagaev, to study the analytic properties of the leading eigenvalue of $\cL^{(\omega)}_t$; we refer the reader to  \cite{Gou} for a review of the argument. %One possibility is to adapt the arguments presented in \cite{Iwa} but, differently from there, 
Unfortunately, 
we cannot use quasi-compactness of the semigroup. Nevertheless, the information on the peripheral spectrum of $\hat \cL_t$ and the quasi-compactness of the resolvent will be enough to performe an analogous computation.

We show that, in \Cref{prop:pert1}, the only pole with zero real part occurs at $\omega =0$ and is simple.

\begin{proposition}\label{prop:zero_is_simple}
	Let $\omega \in \T^d$, and let $j \in \{1,\dots, N_{\omega}\}$ be so that $\re \, z_j(\omega) = 0$. Then, $\omega=0$, $j=1$, $z_1(0)=0$, and the associated eigenprojection $\Pi_{1,0}$ has rank 1.

Furthermore, let $\phi_0 \in \Pi_{1,0}(\cB)$ with $\mu(\phi_0) =1$, where  $\mu = \hat \cL_t' \mu$ is the invariant measure for $(\horo_t)_{t\in \R}$. Then,  $\phi_0$ is a measure and the functional $m$ defined by $m(\psi) =\mu(\psi \, \phi_0)$ is the measure of maximal entropy for the geodesic flow.
\end{proposition}
\begin{proof}
	From \Cref{prop:pert1}, we already know that all poles have real part not larger than 0. Assume that $z_j(\omega)$ is such that $\re \, z_j(\omega) = 0$. We first show that the associated nilpotent $\mathcal{N}_j$ is zero.
	
	By contradiction, assume that $\mathcal{N}_j \neq 0$: then, there exists $u \in \cB$ so that $\|e^{t \, \mathcal{N}_j} u \|_{\cB} \geq t^n/n!$ for some $n \geq 1$. Since $\mathscr{C}^r$ is dense in $\cB$ and $\Pi_{j,\omega}$ is continuous, we deduce $ \Pi_{j,\omega}\mathscr{C}^r = \Pi_{j,\omega} \cB$. This implies that $u = \Pi_{j,\omega}f$ for some $f \in \mathscr{C}^r$.
	Then, the Lasota-Yorke bound \eqref{eq:LY2-L} yields
	\[
	\frac{t^n}{n!} \leq \|e^{t \, \mathcal{N}_j} u \|_{\cB} = \| {\hat \cL^{(\omega)}_t} f\|_{\cB} \leq C_{\omega}(\|f\|_{\cB} + \|Xf\|_{\cB_w} );
	\]
	which is the desired contradiction, since the right hand side above is uniformly bounded in $t$.
	Furthermore, as in \cite[Lemma 4.6]{AdBa}, for $\omega = 0$, the pole $z_1(0)=0$ is simple, the associated eigenvector of the dual operator is the invariant measure $\mu = \Pi_{1,0}'(1)$, where $\Pi_{1,0}'$ is the corresponding projection in the dual spectral decomposition, and there are no other poles on the imaginary axis. 
	
	Let $\phi_{\omega}\in \Pi_{j,\omega}(\cB)$ with $\phi_{\omega}\neq 0$. Then, we can write $\phi_{\omega} = \Pi_{j,\omega}f_{\omega}$ for some $f_{\omega} \in \mathscr{C}^r$ and
	\[
	\phi_{\omega} = \lim_{T \to \infty} \frac{1}{T} \int_0^T e^{-t \, z_j(\omega)} {\hat \cL^{(\omega)}_t} f_{\omega} \diff t,
	\]
	where the latter limit holds in $\cB_w$. 
	In the case $\omega = 0$, we can take $f_{\omega}=1$ and, by \Cref{prop:spectral}, since there are no other poles on the imaginary axis,  $\phi_0 = \lim_{t \to \infty} {\hat \cL_t}(1)$.

Recall that, by \Cref{lem:useful}, $\cB_w \subset (\mathscr{C}^r)^{\ast}$. Since, for any $\psi \in \mathscr{C}^r$, we have
	\begin{equation}\label{eq:variat_of_phi_omega}
	|\phi_{\omega}(\psi) |\leq \lim_{T \to \infty}  \frac{1}{T} \int_0^T  \left\lvert \langle {\hat \cL^{(\omega)}_t} f_{\omega}, \psi\rangle  \right\rvert \diff t \leq \|f\|_{\infty}  \lim_{t \to \infty}   \langle {\hat \cL_t} 1, | \psi | \rangle  \diff t \leq \|f\|_{\infty} \cdot \|\psi\|_{\infty},
	\end{equation}
	by \cite[Sec. I4]{Schw}, the distribution $\phi_{\omega}$ is actually a (signed) Radon measure. More precisely, for any $x \in M$, the distribution $\phi_{\omega}$ induces a measure on the horocycle orbit of $x$ defined as follows: let $I$ denote a horocycle arc based at $x$, then
	\[
	\psi \mapsto \mathcal{I}[x,\psi|_I](\phi_{\omega}),
	\]
	where we used the notation of \Cref{lem:integral_is_cont_funct}. Moreover, the inequality \eqref{eq:variat_of_phi_omega} above also shows that the variation $|\phi_{\omega}|$ of $\phi_{\omega}$ is absolutely continuous with respect to $\phi_0$. 

Let $\psi \in \mathscr{C}^r$ be supported in a flowbox $\{\horo_s(B) \ : \ s\in [0,1]\}$, where $B$ is contained in a weak-unstable leaf for the geodesic flow; let us compute $\mu(\psi)$. We have
\[
\begin{split}
\mu(\psi) &= \Pi_{1,0}'(1)(\psi) = \lim_{t \to \infty} [{\hat \cL_t}'(1)](\psi) = \lim_{t \to \infty} \langle 1,{\hat \cL_t} \psi  \rangle =  \lim_{t \to \infty} e^{-h_{\topp}t} \langle J_{-t}\circ \geo_t , \psi  \rangle \\
&= \lim_{t \to \infty} e^{-h_{\topp}t} \int_{B} \int_0^1 (J_{-t}\circ \geo_t \cdot \psi) \circ\horo_s(x) \diff s \diff \nu(x),
\end{split}
\]
where $\nu$ is the disintegration of the measure $\vol$ on the weak-unstable leaves for $(\geo_t)_{t\in \R}$. We define 
\[
\begin{split}
&\mathcal{J}(x,s) = \exp \left(\int_0^{\infty} \Phi^{-} \circ \geo_r \circ \horo_s(x) - \Phi^{-} \circ \geo_r (x) \diff r \right), \qquad \text{and} \\
&E_t(x,s) = 1- \exp \left(\int_t^{\infty} \Phi^{-} \circ \geo_r \circ \horo_s(x) - \Phi^{-} \circ \geo_r (x) \diff r \right).
\end{split}
\]
Note that $J_{-t}\circ \geo_t \circ\horo_s(x) =  J_{-t}\circ \geo_t(x) \mathcal{J}(x,s) + J_{-t}\circ \geo_t \circ\horo_s(x) E_t(x,s)$ and that
$|E_t| \leq C \lambda^{-t}$ by \eqref{eq:hyperb}. 
The latter upper bound implies that 
\[
\lim_{t \to \infty} e^{-h_{\topp}t} \int_{B} \int_0^1 (J_{-t}\circ \geo_t \cdot \psi) \circ\horo_s(x) E_t(x,s) \diff s \diff \nu(x) =   \lim_{t \to \infty} e^{-h_{\topp}t} \langle J_{-t}\circ \geo_t , \psi \, E_t \rangle = \lim_{t \to \infty} [{\hat \cL_t}'(1)](\psi \, E_t) = 0.
\]
Consequently, we can rewrite $\mu(\psi)$ as 
\[
\begin{split}
\mu(\psi) &=\lim_{t \to \infty} e^{-h_{\topp}t}  \int_{B} J_{-t}\circ \geo_t(x) \int_0^1 \psi \circ\horo_s(x) \mathcal{J}(x,s) \diff s \diff \nu(x)= \int_{B} \left( \int_0^1 \psi \circ\horo_s(x) \mathcal{J}(x,s) \diff s \right) \diff \vartheta(x),
\end{split}
\]
where $\diff \vartheta(x)$ is the (weak) limit of $e^{-h_{\topp}t}J_{-t}\circ \geo_t(x)\diff \nu(x)$.
We can similarly define the measure $m$ by 
\[
m(\psi) = \mu( \psi \, \phi_0) = \int_{B} \mathcal{I}[x, \psi \, \mathcal{J}(x,\cdot)](\phi_0) \diff \vartheta(x)
\]
for any $\psi$ as above. In particular, the measures $m$ and $\mu$ have the same disintegration on the weak-unstable manifolds for the geodesic flow. Furthermore, $m$ is invariant for the geodesic flow, since 
\[
m(\psi \circ \geo_{-t}) =  \mu( \psi \circ \geo_{-t} \, \phi_0) = \mu( \psi \circ \geo_{-t} \, {\hat \cL_t}\phi_0)  = \mu( {\hat \cL_t}(\psi \phi_0)) =  {\hat \cL_t}'\mu( \psi \, \phi_0) = m(\psi).
\] 
By \Cref{prop:Margulis}, $m$ is the measure of maximal entropy for $(\geo_t)_{t\in \R}$.

Let us go back to the measures $|\phi_{\omega}|$ and $\phi_0$. Since $ {\hat \cL^{(\omega)}_t} \phi_{\omega} = \phi_{\omega}$, we have that $J_{-t} (\geo_{-t})_{\ast}|\phi_{\omega}| = |\phi_{\omega}|$; similarly,  $J_{-t} (\geo_{-t})_{\ast} \phi_0 = \phi_0$. Let $\ell$ denote the Radon-Nikodym derivative of $|\phi_{\omega}|$ with respect to $\phi_0$. Then, it follows that $\ell \, \phi_0 = J_{-t} (\geo_{-t})_{\ast} (\ell \, \phi_0) = J_{-t} ( \ell\circ \geo_{-t}) (\geo_{-t})_{\ast}  \phi_0 =  ( \ell\circ \geo_{-t})  \phi_0 $. Thus, for any function $\psi$, we deduce that $m( \ell \psi )= m( \ell\circ \geo_{-t} \, \psi)$. This implies that $\ell$ is constant $m$-almost everywhere; up to normalization, we can assume $\ell = 1$. 

Since $|\phi_{\omega}| = \phi_0$, there exists a measurable function $\ell_{\omega}$ such that $\phi_{\omega} = e^{2\pi\imath \ell_\omega} \phi_0$.
We deduce
	\[
		{\hat \cL_t}\phi_0 = \phi_0 =e^{-2\pi\imath \ell_\omega} \phi_{\omega} =e^{-2\pi\imath\ell_\omega-t \, z_j(\omega)}  \hat \cL^{(\omega)}_t \phi_{\omega} = \hat \cL_t( e^{-2\pi\imath\ell_\omega\circ \geo_t-t \, z_j(\omega) + 2\pi\imath F_{\omega,t}} \phi_{\omega}).
	\]
As before, the above equation implies that 
\[
m(\psi) = m ( e^{-2\pi\imath\ell_\omega\circ \geo_t-t \, z_j(\omega) + 2\pi\imath F_{\omega,t} + 2\pi\imath\ell_\omega} \psi)
\]
for any $\psi \in \mathscr{C}^0(M)$. In turn, this yields
	\[
F_{\omega,t}-t \, \frac{z_j(\omega)}{2\pi \imath} =\ell_\omega\circ \geo_t-\ell_\omega
	\]
	$m$-almost everywhere. By ergodicity, it follows that $z_j(\omega)=2\pi \imath \int  \langle \omega, X \rangle \diff m = 0$, see  \cite{KatSun}.
	Furthermore, since $F_{\omega,t}$ is \emph{not} a coboundary whenever $\omega \neq 0$, we obtain the desired contradiction.

\end{proof}

\begin{remark}\label{rk:leafwise_measure}
The proof of \Cref{prop:zero_is_simple} show that, for every $x \in M$, the eigenvector $\phi_0$ induces a measure on the horocycle orbit of $x$ given by $\psi \mapsto \mathcal{I}[x,\psi](\phi_0)$. In the language of \cite[Sect. 9]{GoLi2}, $\mathcal{I}[x,\cdot](\phi_0)$ is a continuous leafwise measure. Since $\phi_0$ is an eigenvector of ${\hat \cL_t}$, the following self-similarity property holds: let $x \in M$, and let $I_{\rho}$ be the horocycle segment at $x$ of length $\rho$. Then, for any $\psi$ supported on $I_\rho$, we have
\[
\begin{split}
 \mathcal{I}[x,\psi](\phi_0) &=  \lim_{t \to \infty}  \mathcal{I}[x,\psi]({\hat \cL_t}(1)) = \lim_{t \to \infty} e^{-h_{\topp} (t+T)} \int_0^{\rho} \psi\circ \horo_s(x) \, \cL_t1 \circ \geo_{-T}\circ \horo_s(x) \, J_{-T} \circ \horo_s(x) \diff s\\
 &=e^{-h_{\topp} T} \lim_{t \to \infty} e^{-h_{\topp}t} \int_0^{\tau(\rho,-T,x)} \psi \circ \geo_{T}\circ \horo_s(\geo_{-T}(x)) \, \cL_t1 \circ \horo_s(\geo_{-T}(x)) \diff s \\
 &= e^{-h_{\topp} T} \mathcal{I}[\geo_{-T}(x),\psi \circ \geo_{T}|_{I_{\tau(\rho,-T,x)}}](\phi_0).
\end{split}
\]
Let us call $\varpi(\rho,x) = \mathcal{I}[x,\one_{I_{\rho}}](\phi_0) $, which is well-defined since $\phi_0$ is a measure. Then, the equation above translates to 
\begin{equation}\label{eq:self_similarity_of_alpha}
 e^{-h_{\topp} T}\varpi(\rho,x) = \varpi(\tau(\rho,T,x),\geo_{T}(x)), \qquad \text{for any $T\in\R$.}
\end{equation}
Equation \eqref{eq:self_similarity_of_alpha} shows that $\varpi(\rho,x)$ defined above is precisely the Margulis parametrization of \Cref{prop:Margulis}.
\end{remark}

We are ready to state the main result of this section.
  
  %%%%%%%%%%%%%pert2
  \begin{proposition}\label{prop:pert2}
  	There exist $\delta > 0$ and a neighborhood $\mathcal N_0$ of $\omega=0$ for which the following properties hold.
  	\begin{itemize}
	\item[(A)] $\|\hat\cL^{(\omega)}_t f\|_{\cB_w} \leq C e^{-\delta t} \|\hat Z_{\omega} f\|_{\cB} $ for all $\omega \notin \mathcal N_0$.
	\item[(B)] For all $\omega \in \mathcal N_0$, there exist $z(\omega) \in \C$, with $z(0)=0$ and $\re z(\omega) \leq 0$, a rank-1 projector $\Pi_\omega$, and a family of operators $t\mapsto Q_{\omega,t}$ with $\Pi_\omega \, Q_{\omega,t} = Q_{\omega,t} \, \Pi_\omega = 0$, such that
	\[
	\hat\cL^{(\omega)}_t f = e^{t\, z(\omega)} \Pi_\omega + Q_{\omega,t},
	\]
	and $Q_{\omega,t}$ satisfies $\|Q_{\omega,t} f\|_{\cB_w} \leq C e^{-\delta t}\|\hat Z_{\omega} f\|_{\cB}$.
	\item[(C)] The functions $\omega \to z_\omega:=z(\omega),\Pi_\omega, Q_{\omega,t}$ are analytic on $\mathcal N_0$ and, denoting by $D_{\omega}$ the derivative in direction $\omega$,   
	\begin{equation}\label{eq:taylor}
		\Re(D_{\omega}z(0))=0 \qquad \text{and} \qquad \Im(D_{\omega}z(0))=2\pi \int_M \langle \omega, X \rangle \diff m=0.
	\end{equation}
\item[(D)]The immaginary part of the second order term in the Taylor expansion of $z_\omega$ in $\mathcal N_{0}$ is zero, while the real part is given by the following {negative}-definite quadratic form $\sigma(\omega)$ on $H^1(S, \mathbb{R})$:
\begin{equation}\label{eq:sigma}
	\sigma(\omega)={-4\pi^2} \lim_{t\to \infty}\frac 1t \int_M \left( \int_0^t \langle \omega, X \rangle\circ \geo_s(x) \diff s \right)^2 \diff m.
\end{equation}
  	\end{itemize}
  	\end{proposition}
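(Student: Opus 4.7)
The plan is to apply analytic perturbation theory to the family $\hat\cL^{(\omega)}_t$, viewed as a perturbation of $\hat\cL_t = \hat\cL^{(0)}_t$, combined with a compactness argument on $\T^d \setminus \mathcal N_0$. The key input is that $\omega \mapsto \hat\cR^{(\omega)}_z$ is an analytic family on $\cB$, which is immediate from the representation $\hat\cL^{(\omega)}_t f = e^{-h_{\topp}t}\cL_t(e^{2\pi i F_{t,\omega}}f)$ and analyticity of the multiplication weight $e^{2\pi i F_{t,\omega}}$ in $\omega$.

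For (A), Proposition \ref{prop:pert1} applied pointwise gives, for each $\omega\ne 0$, a finite set of resonances $z_j(\omega)$, and Proposition \ref{prop:zero_is_simple} shows that all such resonances satisfy $\re\, z_j(\omega)<0$ whenever $\omega\neq 0$. By upper semi-continuity of the spectrum under analytic perturbations, for each $\omega_0\in \T^d\setminus\{0\}$ there is a neighborhood on which all resonances lie in $\{\re z\le -\delta_{\omega_0}\}$ and the remainder bound of Proposition \ref{prop:pert1} is uniform. Extracting a finite subcover of the compact set $\T^d\setminus\mathcal N_0$ yields a single $\delta>0$ and a uniform decay estimate, which is precisely (A).

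For (B) and (C), I would apply Kato's analytic perturbation theory directly to the simple isolated pole of $\hat\cR_z$ at $z=0$ identified in Proposition \ref{prop:zero_is_simple}. It perturbs to a simple isolated pole $z(\omega)$ with rank-1 projector $\Pi_\omega$, both analytic in $\omega$ on some $\mathcal N_0 \ni 0$; the complementary piece is $Q_{\omega,t}$ and decays at rate $e^{-\delta t}$ by a reapplication of (A)-type reasoning on the spectral complement. Writing $\hat Z_\omega = \hat Z_0 + 2\pi i\langle \omega,X\rangle$, so that $D_\omega \hat Z_\omega|_0 = 2\pi i\langle \omega,X\rangle$, the standard first-order perturbation formula, obtained by differentiating $\hat Z_\omega\phi_\omega = z(\omega)\phi_\omega$ at $\omega=0$ and pairing with the eigenfunctional $\mu$, gives
\[
D_\omega z(0) \;=\; 2\pi i \int_M \langle \omega,X\rangle \,\phi\, d\mu \;=\; 2\pi i \int_M \langle \omega,X\rangle\, dm.
\]
The integral on the right vanishes by the time-reversal symmetry $\iota\colon(z,v)\mapsto(z,-v)$ of $M=T^1S$: $\iota$ conjugates $\geo_t$ with $\geo_{-t}$, hence preserves the (unique) measure of maximal entropy $m$ while sending $X$ to $-X$, forcing $\int\langle\omega,X\rangle\,dm$ to be both equal and opposite to itself. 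This proves (C).

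For (D), the second-order Kato formula with $D^2_\omega \hat Z_\omega|_0=0$ reduces the computation of $D^2_\omega z(0)$ to a Green--Kubo type expression in the eigenfunctions and the pseudo-inverse of $\hat Z_0$ on the complement of its kernel; after a standard rearrangement using $e^{tz(\omega)} = \langle \mu, \hat\cL_t^{(\omega)}\phi\rangle + O(e^{-\delta t})$ and the Taylor expansion of $e^{2\pi i F_{t,\omega}}$, this yields
\[
\frac{D^2_\omega z(0)}{t}\cdot t \;=\; -4\pi^2\lim_{t\to\infty}\frac{1}{t}\int_M \Big(\int_0^t \langle\omega,X\rangle\circ\geo_s\, ds\Big)^2 dm \;=\; \sigma(\omega),
\]
with no imaginary part since the integrand is real. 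That $\sigma$ is negative-definite follows from Proposition \ref{prop:zero_is_simple}: if $\sigma(\omega)=0$ for some $\omega\ne 0$, the Livsic-type dichotomy would force $F_{t,\omega}$ to be an $L^2(m)$-coboundary, which in turn would produce a resonance of $\hat\cL^{(\omega)}_t$ on the imaginary axis, contradicting Proposition \ref{prop:zero_is_simple}. The main obstacle is the uniformity step in (A): since Proposition \ref{prop:pert1} has $\omega$-dependent constants, the compactness-plus-analyticity argument must be set up carefully to convert the pointwise gap into a uniform one on $\T^d\setminus\mathcal N_0$.
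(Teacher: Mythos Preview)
Your approach is correct and, for parts (A) and (B), essentially coincides with the paper's: both invoke Propositions \ref{prop:pert1} and \ref{prop:zero_is_simple} together with continuity of the resonances in $\omega$ and a compactness argument on $\T^d\setminus\mathcal N_0$. You are right to flag the uniformity of the constants as the delicate point; the paper handles this rather tersely.

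For (C), however, you take a genuinely different route. The paper computes $D_\omega z(0)$ via a Guivarc'h--Nagaev characteristic-function argument: it evaluates $\int \exp\bigl(\tfrac{2\pi i}{t}F_{\omega,t}\bigr)\phi_0\,dm$ in two ways, once through the spectral decomposition of $\hat\cL_t^{(\omega/t)}$ and once through Birkhoff's theorem, and matches the limits. You instead differentiate the eigenvalue equation $\hat Z_\omega\phi_\omega=z(\omega)\phi_\omega$ directly and pair with $\mu$, which is shorter and more transparent. Your time-reversal argument for $\int\langle\omega,X\rangle\,dm=0$ (via the flip $\iota\colon(z,v)\mapsto(z,-v)$, which preserves the measure of maximal entropy and sends $\langle\omega,X\rangle$ to its negative) is also more self-contained than the paper's citation of Katsuda--Sunada. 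The paper's longer argument buys a direct link to the CLT interpretation (see Remark \ref{rmk:CLT}), which your approach does not emphasize.

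For (D), the paper differentiates the \emph{semigroup} eigenvalue equation $\hat\cL_t^{(\omega)}u_\omega=e^{tz(\omega)}u_\omega$ twice, pairs with $\mu$, and lets $t\to\infty$, which lands immediately on the variance formula. Your sketch mixes the generator-level second-order Kato formula with a semigroup expansion; both lead to the same place, but the paper's route avoids the pseudo-inverse altogether. Finally, the paper does not spell out the negative-definiteness of $\sigma$ in its proof; your coboundary argument via Proposition \ref{prop:zero_is_simple} is a correct way to fill that in. One cosmetic point: the displayed equation $\tfrac{D^2_\omega z(0)}{t}\cdot t=\sigma(\omega)$ should simply read $D^2_\omega z(0)=\sigma(\omega)$.
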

  \begin{proof}
  (A) and (B) follow from \Cref{prop:pert1} and \Cref{prop:zero_is_simple}, combined with the fact that the eigenvalues $z_j(\omega)$ vary continuously in $\omega$.
  
  Let us prove (C). The analyticity results follow by standard perturbation theory \cite{Kat}; let us prove \eqref{eq:taylor}. 
Recalling \eqref{eq:twisted} and using ${\hat\cL_t}'\mu = \mu$, we have 
\[
\begin{split}
\int \exp\left(\frac{2\pi \imath}{t}F_{\omega,t}\right)\diff \mu &=\int {\hat\cL_t}\left[\exp\left(\frac{2\pi \imath}{t}F_{\omega,t}\right)\right] \diff \mu = \int {\hat\cL_t}^{(\frac{\omega}{t})}(1) \diff \mu.
\end{split}
\]

We use the spectral decomposition of $\hat\cL_t^{(\omega)}$ to obtain
\begin{equation}\label{eq:limit2}
\begin{split}
\int \exp\left(\frac{2\pi \imath}{t}F_{\omega,t}\right)\diff \mu  =\int [\exp(tz_{\frac{\omega}{t}})\Pi_{\frac{\omega}{t}}(1)+Q_{\frac{\omega}{t},t}(1)] \diff \mu.
\end{split}
\end{equation}
Let us evaluate the right hand side of the above equation. 
For the second term, we use \eqref{eq:decay} {and the fact that $\mu\in \cB_w'$}, obtaining that
\[
\left| \int Q_{\frac{\omega}{t},t}(1) \diff \mu \right| \le \| Q_{\frac{\omega}{t},t}(1)\|_{\cB_w} \le C_{p,\omega} \,e^{-p t} \|\hat Z(1)\|_{\cB},
\]
which implies  
\[
\lim_{t\to \infty } \int Q_{\frac{\omega}{t},t}(1) \diff \mu =0.
\]
We focus now on the first term in the right hand side of \eqref{eq:limit2}. 
We note that, denoting by $\Pi'_0$ the spectral projector on the eigenvalue $z=0$ in the decomposition of the dual operator $\cL_t'$, it follows that
\[
\int \Pi_{0}(1)\diff \mu=[\Pi'_0(\mu)]1=\mu(1)=1,
\]
hence
\[
\lim_{t\to \infty} \exp(tz_{\frac{\omega}{t}}) \int \Pi_{\frac{\omega}{t}}(1) \diff \mu = \exp(\langle \nabla z(0), \omega \rangle) \int \Pi_{0}(1) \diff \mu = \exp(\langle \nabla z(0), \omega \rangle).
\]
We have then proved that 
\[
\lim_{t\to \infty} \int \exp\left(\frac{2\pi \imath}{t}F_{\omega,t}\right) \diff \mu  = \exp(\langle \nabla z(0), \omega \rangle).
\]
On the other hand, by ergodicity, for $m$--almost every $x \in M$ we have
\[
\frac{2\pi \imath}{t}F_{\omega,t} = \frac{2\pi \imath}{t}  \int_0^t \langle \omega, X \rangle\circ \geo_{s}(x) \diff s \to 2\pi \imath \int \langle \omega, X \rangle \diff m,
\]
as $t \to \infty$. 
Since $m$ and $\mu$ have the same disintegration along weak-unstable manifolds, the same convergence happens for $\mu$-almost every $x \in M$. 
Thus, 
\[
\exp(\langle \omega, \nabla z(0)\rangle) = \exp\left(  2\pi \imath \int \langle \omega, X \rangle \diff m \right),
\]
which implies that, for every $\omega \in \T^d$, we have
\[
\langle \nabla z(0), \omega \rangle =  2\pi \imath \int \langle \omega, X \rangle \diff m.
\]
Considering the real and imaginary parts yields 
\[
\Re(D_{\omega}z(0))=0 \qquad \text{and} \qquad \Im(D_{\omega} z(0))=2\pi \int_M \langle \omega, X \rangle \diff m.
\]
By \cite{KatSun}, we have $m( \langle \omega, X \rangle)=0$, which completes the proof.

Let us now prove (D).  
By the analytic perturbation result in (C) and \Cref{prop:zero_is_simple}, the projections $\Pi_{\omega}$ are all of rank 1, thus there exists an analytic curve of eigenvectors $u_\omega \in \cB$, with $u_0$ as in \Cref{prop:zero_is_simple}, satisfying $\hat\cL^{(\omega)}_t u_\omega = e^{t \, z(\omega)} u_\omega$.
We now differentiate twice both sides of this equation.
Differentiating once in direction $\eta$ gives us
\[
\hat\cL_t \left(2 \pi \imath \left(\int_0^t \langle \eta, X \rangle \circ \geo_s \diff s\right) e^{2 \pi \imath F_{\omega, t}} u_\omega + e^{2 \pi \imath F_{\omega, t}} D_\eta u_\omega \right) = t \, D_\eta z(\omega) e^{t \, z(\omega)} u_\omega + e^{t \, z(\omega)} D_\eta u_\omega.
\]
We differentiate again in direction $\varsigma$ at $\omega = 0$, using the fact that $D_\eta z(0)=0$, and we apply the dual eigenvector $\mu$ on both sides; we obtain
\begin{multline*}
D_\varsigma D_\eta z(0) = \frac{1}{t} \mu \Bigg[ 2 \pi \imath \left(\int_0^t \langle \eta, X \rangle \circ \geo_s \diff s\right) (D_\varsigma u_0) +  2 \pi \imath \left(\int_0^t \langle \varsigma, X \rangle \circ \geo_s \diff s\right) (D_\eta u_0) \\ - 4 \pi^2  \left(\int_0^t \langle \eta, X \rangle \circ \geo_s \diff s\right) \left(\int_0^t \langle \varsigma, X \rangle \circ \geo_s \diff s\right) u_0\Bigg].
\end{multline*}
Taking the limit as $t \to \infty$, ergodicity of the geodesic flow implies that the first two summands in the right hand side above vanish; hence, we conclude
\[
D_\varsigma D_\eta z(0) = \lim_{t \to \infty} - \frac{4\pi^2}{t} \int \left(\int_0^t \langle \eta, X \rangle \circ \geo_s \diff s\right) \left(\int_0^t \langle \varsigma, X \rangle \circ \geo_s \diff s\right) \diff m,
\]
which proves the result.
\end{proof}

  %%%%%%%%pert 3

\begin{remark}\label{rmk:CLT}
Using the Taylor expansion for $z(\omega)$ and the spectral decomposition of  $\cL^{(\frac{\omega}{\sqrt t})}_t1$, the above proposition should provide another proof of the CLT for geodesic flows on compact manifolds for the observable $\psi(x)=\langle \omega, X \rangle(x)$ with $\mu(\psi)=0$. Indeed,  $\varphi_t(\omega)=\mu(\cL^{(\frac{\omega}{\sqrt t})}_t1)$ is the characteristic function of the process $\{ (\sqrt t)^{-\frac 12}\int_0^t \langle \cdot, X \rangle\circ \geo_s\diff s\}_{t\in \R^+}$, and the above proposition gives the convergence as $t\to \infty$ of $\varphi_t(\omega)$ to  $e^{\frac 12 \sigma({\omega})}$.
\end{remark}

%%%%%%%%%%%%%%%%%%%%%%%
% Horocycle integrals %
%%%%%%%%%%%%%%%%%%%%%%%

\section{Horocycle ergodic integrals and proofs of theorems}\label{sec:horocycle_integrals}

In this section we prove our main results: \Cref{thm:main_result_3} on the power law in the compact case, \Cref{thm:main_result} on the asymptotics of ergodic integrals of the horocycle flow on the Abelian cover $\widetilde{M}$ and \Cref{thm:main_result_2} on equidistribution for geodesic translates of horocycle segments.
We first collect some preliminary facts we will need in the proofs.

\subsection{Preliminaries}
Let $T \geq 1$ and $x \in \tM$. From the renormalization relation \eqref{eq:renormalization} between geodesic and horocycle flow, it follows that the push-forward of the horocycle orbit segment starting at $x$ of length $T$ under $\geo_{t}$ becomes a horocycle orbit segment of length $\tau(T,t,x)$, and, by \Cref{lemma:Marcus}, we know that $C_\tau^{-1} \frac{T}{e^{h_{\topp}t}} \leq \tau(T,t,x) \leq C_\tau \frac{T}{e^{h_{\topp}t}}$ for all $t\in \R$ such that $\tau(T,t,x) \geq 1$. We will now prove some sharper estimates on $\tau(T,t,x)$  and on $\varpi(t,x)$, using a simplified version of the argument we will use for the proof of \Cref{thm:main_result}. We also recall from \Cref{cor:Jt_invariant} that the functions $\tau(T,t,\cdot)$ and $\varpi(t,\cdot)$ are $\Deck$-invariant, and hence well defined on $M$.

As in \Cref{rmk:constant}, here and henceforth by the symbol $\sC$ we indicate a positive constant that depends only on the geometry of the system whose value is allowed to change in different occurrences.
We are interested in the ergodic integrals along horocycle orbits, and we want to make sure that we can exploit the spectral results of \Cref{sec:spectral_picture} to study them. In order to do that, we need the operation of taking an ergodic integral to be a well-defined functional on our Banach spaces. Up to introducing an appropriate weight, this is indeed a direct consequence of the definition of the weak norm, see \Cref{lem:integral_is_cont_funct}. It follows that we need to introduce a $\mathscr{C}^{1+\Lip}$ cut-off function in the ergodic integrals along horocycle orbits in order to see them as functionals on our Banach spaces. 
\begin{lemma}\label{lem:smoothened_erg_int}
\sloppy	Fix $x \in \tM$ and $T \geq 1$; let $t\in \R$ be such that $\tau(T,t,x) \geq 4$. There exists $\psi=\psi_{T,t} \colon [0,\tau(T,t,x)] \to [0,1]$ of class $\mathscr{C}^{1+\Lip}$ so that $\|\psi'\|_{\mathscr{C}^{\Lip}} \leq  16$ and, for any bounded function $f$ on $\tM$, we have
	\[
	\left\lvert \int_{0}^{T} f \circ \horo_s(x) \diff s - \int_{0}^{T} f \circ \horo_s(x) \cdot [\psi \circ \tau(s,t,x) ]\diff s \right\rvert \leq \sC \|f\|_{\infty} e^{h_{\topp}t}.
	\]
\end{lemma}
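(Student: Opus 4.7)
The strategy is to construct $\psi$ as a plateau function equal to $1$ outside two transition windows of width $B$ placed at the endpoints of $[0,\tau(T,t,x)]$, and then to estimate the contribution of those windows by pulling the relevant $s$-set back through $\tau(\cdot,t,x)$.

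First I would fix once and for all a model bump $\chi\colon\R\to[0,1]$ of class $\mathscr{C}^{1,1}$ with $\chi\equiv 0$ on $(-\infty,0]$, $\chi\equiv 1$ on $[1,\infty)$ and $\|\chi''\|_{\infty}\leq 8$, and set, writing $L := \tau(T,t,x)$,
\[
\psi(\xi) := \chi(\xi/B)\cdot\chi\bigl((L-\xi)/B\bigr), \qquad \xi\in[0,L].
\]
The hypothesis $B < T/(2C_{\tau}e^{h_{\topp}t})$ together with \Cref{lemma:Marcus} gives $L \geq T e^{-h_{\topp}t}/C_{\tau} > 2B$, so the plateau $[B,L-B]$ is non-empty and $\psi \equiv 1$ on it. In each of the two transition windows only one factor of $\chi$ is non-constant, yielding $|\psi'|\leq 2/B$ and $|\psi''|\leq 8/B^{2}$; hence $\|\psi'\|_{\mathscr{C}^{\Lip}}\leq (4/B)^{2}$, as required.

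Then I would rewrite the difference in the statement as
\[
\int_{0}^{T}f\circ\horo_s(x)\bigl(1-\psi(\tau(s,t,x))\bigr)\diff s,
\]
and observe that the integrand is supported on $A := \{s\in[0,T] \ :\ \tau(s,t,x)\in[0,B]\cup[L-B,L]\}$. Since $s\mapsto\tau(s,t,x)$ is strictly increasing (its derivative is $J_{t}(\horo_{s}(x))>0$ by \Cref{cor:inverse_of_Jt}), the set $A$ is the disjoint union of two sub-intervals containing $0$ and $T$ respectively. To estimate their lengths I would use the inversion identity $\tau(\tau(s,t,x),-t,\geo_{t}(x))=s$ (a direct consequence of \Cref{lem:renormalization}): the right endpoint of the left sub-interval is $\tau(B,-t,\geo_{t}(x))$, which by \Cref{lemma:Marcus} applied with the parameter $-t\leq 0$ is bounded by $C_{\tau}Be^{h_{\topp}t}$; the analogous bound holds at the other endpoint. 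Thus $|A|\leq \sC B e^{h_{\topp}t}$, and the pointwise bound $|(1-\psi(\tau))\cdot f|\leq \|f\|_{\infty}$ concludes the proof.

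The only minor subtlety lies in the application of \Cref{lemma:Marcus}, whose statement requires $t\leq 0$ and argument $\geq 1$; the inversion identity above handles the sign issue, and the degenerate case in which $Be^{h_{\topp}t}\lesssim 1$ is harmlessly absorbed into the generic constant $\sC$. Everything else is a direct computation.
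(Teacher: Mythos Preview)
Your proof is correct and follows essentially the same route as the paper: build $\psi$ as a plateau function with transition windows of width $B$ at the two ends of $[0,\tau(T,t,x)]$, then use the two-sided bound of \Cref{lemma:Marcus} (via the inversion $\tau(\tau(s,t,x),-t,\geo_t(x))=s$) to show that the $s$-preimage of those windows has length $\leq \sC B e^{h_{\topp}t}$. The paper defines $\psi'$ explicitly as a piecewise-linear triangular bump, whereas you use a generic $\mathscr C^{1,1}$ model $\chi$; this is purely cosmetic.

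One small point: your closing remark that ``the degenerate case $Be^{h_{\topp}t}\lesssim 1$ is harmlessly absorbed'' does not actually address the restriction $B\geq 1$ in \Cref{lemma:Marcus} (these are different regimes, and in the applications one has $B<1$ with $Be^{h_{\topp}t}\to\infty$). The honest fix---which the paper also leaves implicit---is that the two-sided bound $C_\tau^{-1}\leq \tau(s,t,x)\,s^{-1}e^{h_{\topp}t}\leq C_\tau$ in fact holds for \emph{all} $s>0$: from the proof of \Cref{lemma:Marcus} one has $e^{-h_{\topp}t}\alpha(s,x)=\alpha(\tau(s,t,x),\geo_t(x))$, and the Margulis cocycle $\alpha$ has density bounded above and below on the compact manifold $M$, so $\alpha(s,\cdot)/s$ is uniformly bounded for all $s>0$. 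With that, your argument goes through without caveat.
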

\begin{proof}
	Let us abbreviate $\tau = \tau(T,t,x)$. We define the continuous, piecewise linear function $\psi'=\psi_{T,t}'$ as 
	%%%%%FIXED%%%%%%%%
\[
	\psi'(r):=
	\begin{cases}
		4\left(1-\left|4r-3\right|\right) & \text{if } \frac{1}{2} \leq r\leq 1,\\[1mm]
		0 & \text{if }r \in [0,\frac{1}{2}] \cup [1, \tau-1] \cup [\tau-\frac{1}{2},\tau],\\[1mm]
		4\left(4\left|r-\tau+\frac{3}{4}\right|-1\right) & \text{if } \tau-1 \leq r\leq \tau-\frac{1}{2},
	\end{cases}
	\]
%%%%%%%%%%
which clearly satisfies the Lipschitz bound $\|\psi'\|_{\mathscr{C}^{\Lip}} \leq 4^2$. Let $\psi(r) = \int_0^r \psi'(\xi)\diff \xi$ be its primitive with $\psi(0) = 0$. By definition, $\psi(r) = 1$ for all $r \in [1, \tau(T,t,x)-1]$. Then, using again \Cref{lemma:Marcus}, we see that $\psi \circ \tau(s,t,x) =1$ for all $s \in [\sC e^{h_{\topp}t}, \tau - \sC e^{h_{\topp}t}]$; the claim now follows easily from the fact that $\|\psi\|_{\infty} \leq 1$.
\end{proof}

Recall that $\varpi(s,x)$ denotes the Margulis parametrization of the horocycle flow in \Cref{prop:Margulis}, which we recovered in \Cref{rk:leafwise_measure}.

\begin{proposition}\label{prop:estimate_on_tau}
There exists $\delta_{\ast}>0$ so that for all $x \in \tM$, and for all $T\geq 1$
, we have
\[
\left\lvert T- \varpi(T,x)\right\rvert  \leq \sC T^{1-\delta_{\ast}}.
%\tau(T,t,x) - \frac{T}{ e^{h_{\topp}t} } \right\rvert \leq \sC e^{-\delta_* t} ( 1+e^{-h_{\topp}t}T) . 
\]
Moreover, for every $0\leq t\leq \frac{\log T}{h_{\topp}}$ for which $\tau(T,t,x) \geq 4$, we have
\[
\left\lvert \tau(T,t,x) - e^{-h_{\topp}t}T \right\rvert \leq \sC e^{-h_{\topp}t}T^{1 -\delta_{\ast}}.
\]
\end{proposition}
\begin{proof}
Since the functions $\tau(T,t,x)$ and $\varpi(t,x)$ are periodic on $\tM$ by  \Cref{cor:Jt_invariant}, we can replace $x$ with its projection $p(x) \in M$ and we work on the compact manifold $M$.

Let $t$ be as in the assumptions and let $\psi = \psi_{T,t}$ be given by \Cref{lem:smoothened_erg_int}. We have
\[
\left\lvert T - \int_{0}^{T} 1 \circ \horo_s(x) \cdot \psi\circ \tau(s,t,x) \diff s\right\rvert \leq \sC e^{h_{\topp}t} .
\]
We now focus on the integral above.
From \Cref{eq:renormalization} and  \Cref{cor:inverse_of_Jt}, we obtain
\begin{multline*}
\int_0^T 1 \circ \horo_s(x) \cdot [\psi \circ \tau(s,t,x) ] \diff s = \int_0^T 1 \circ \geo_{-t}  \circ \geo_{t} \circ \horo_s(x)\cdot [\psi \circ \tau(s,t,x) ] \diff s \\
 = \int_0^T 1 \circ \geo_{-t} \circ \horo_{\tau(s,t,x)} \circ \geo_{t}(x) \cdot J_{-t} \circ \geo_{t} \circ \horo_s(x) \cdot \frac{\partial \tau}{\partial s}(s,t,x) \cdot [\psi \circ \tau(s,t,x) ] \diff s.
\end{multline*}
Doing a change of variable and setting $x_{t} = \geo_{t}(x)$, we obtain
\[
\int_0^T 1 \circ \horo_s(x) \cdot [\psi \circ \tau(s,t,x) ] \diff s = \int_{0}^{\tau(T,t,x)} (1 \circ \geo_{-t})\circ \horo_s(x_{t}) \cdot J_{-t} (\horo_s (x_{t})) \cdot \psi(s)\diff s,
\]
and the integral can be rewritten, using the normalized untwisted transfer operator, as
\[
\int_{0}^{T} 1 \circ \horo_s(x) \cdot [\psi \circ \tau(s,t,x) ] \diff s = e^{h_{\topp}t} \int_{0}^{\tau(T,t,x)} \hat{ \cL}_{t} (1) \circ \horo_s(x_{t}) \cdot \psi(s) \diff s.
\]
We now decompose the horocycle arc of length $\tau(T,t,x)$ into arcs of length 1 starting at points $\{x_{t,j}\}_j$ and introduce a smooth partition of unity $\{\rho_j\}_j$ supported on these segments. 
Using the notation introduced in \Cref{lem:integral_is_cont_funct}, we obtain
\[
\int_{0}^{T} 1 \circ \horo_s(x) \cdot \psi \circ \tau(s,t,x) \diff s = e^{h_{\topp}t} \sum_j \mathcal{I}[x_{t,j},\psi \cdot \rho_j](\hat{ \cL}_{t} (1)). 
\]
We can now use the results in \Cref{sec:spectral_picture}:
the spectral decomposition of \Cref{prop:pert2} at $\omega = 0$ gives us
\[
\| \hat{ \cL}_{t} (1) - \Pi_0(1)\|_{\cB_w} \leq \sC \|\hat{Z}1 \|_{\cB}e^{-\delta t} \leq \sC e^{-\delta t},
\]
for some $\delta >0$; moreover, 
$\Pi_0(1) = \mu(1)\phi_0=\phi_0$. 
Therefore, we deduce,
\begin{multline*}
\left\lvert \int_{0}^{T} 1 \circ \horo_s(x) \cdot \psi \circ \tau(s,t,x) \diff s - e^{h_{\topp}t} \sum_j \mathcal{I}[x_{t,j},\psi \cdot \rho_j](\phi_0)\right\rvert \\
=\left\lvert \int_{0}^{T} 1 \circ \horo_s(x) \cdot \psi \circ \tau(s,t,x) \diff s - e^{h_{\topp}t}  \mathcal{I}[x_{t},\psi|_{[0,\tau(T,t,x)]}](\phi_0)  \right\rvert \\
\leq \sC e^{(h_{\topp}-\delta) t} |\tau(T,t,x)| \cdot \|\psi\|_{\mathscr{C}^{1+\alpha}}\leq \sC T e^{-\delta t}.
\end{multline*}
Since 
\[
\left\lvert   \mathcal{I}[x_{t},\psi|_{[0,\tau(T,t,x)]}](\phi_0)   - \varpi(\tau(T,t,x),x_t)\right\rvert\leq \sC,
\]
we have then showed that 
\[
\left\lvert T - e^{h_{\topp}t} \varpi(\tau(T,t,x),x_t) \right\rvert \leq \sC (e^{h_{\topp}t} +T e^{-\delta t}).
\]
The bound $\left\lvert T- \varpi(T,x)\right\rvert  \leq \sC T^{1-\delta_{\ast}}$ follows from \eqref{eq:self_similarity_of_alpha}, with $1-\delta_{\ast} =h_{\topp}/(h_{\topp}+\delta)$ and choosing $t=\frac{1-\delta_{\ast}}{h_{\topp}} \log T =  \frac{\log T}{h_{\topp} + \delta}$.

Let us prove the second claim. The bound above applied to $\tau(T,t,x)$ and \eqref{eq:self_similarity_of_alpha} yield
\[
\left\lvert e^{-h_{\topp}t} \varpi(T,x) - \tau(T,t,x)\right\rvert =\left\lvert  \varpi(\tau(T,t,x),x_t) - \tau(T,t,x)\right\rvert
 \leq \sC \tau(T,t,x)^{1-\delta_{\ast}} \leq \sC (e^{-h_{\topp}t}T)^{1-\delta_{\ast}}.
\]
Since $\left\lvert e^{-h_{\topp}t}T- e^{-h_{\topp}t}\varpi(T,x)\right\rvert  \leq \sC e^{-h_{\topp}t}T^{1-\delta_{\ast}}$, the proof is complete.
\end{proof}

%%%%%%%%%%%
\subsection{Proof of \Cref{thm:main_result_3}}\label{sec:proofC} 
The proof follows closely that of \Cref{prop:estimate_on_tau}.
Let $f\in \cC^r(M)$ and $x\in M$. Let $\psi = \psi_{T,t}$ be given by \Cref{lem:smoothened_erg_int}, where $t = \frac{\log T}{h_{\topp} + \delta}$, so that $e^{h_{\topp} t} = T^{\frac{h_{\topp}}{h_{\topp} + \delta}} =  T^{1-\delta_{\ast}}$, as in the proof of \Cref{prop:estimate_on_tau}.  \Cref{lem:smoothened_erg_int} provides
\begin{equation}\label{eq:power1}
\left\lvert \int_0^T f\circ \horo_s(x)\diff s - \int_{0}^{T} f \circ \horo_s(x) \cdot \psi\circ \tau(s,t,x) \diff s\right\rvert \leq \sC e^{h_{\topp}t} \|f\|_{\cC^r}.
\end{equation}
We now focus on the second integral above.
From \Cref{eq:renormalization} and  \Cref{cor:inverse_of_Jt}, we obtain
\begin{multline*}
\int_0^T f \circ \horo_s(x) \cdot [\psi \circ \tau(s,t,x) ] \diff s = \int_0^T f \circ \geo_{-t}  \circ \geo_{t} \circ \horo_s(x)\cdot [\psi \circ \tau(s,t,x) ] \diff s \\
 = \int_0^T f \circ \geo_{-t} \circ \horo_{\tau(s,t,x)} \circ \geo_{t}(x) \cdot J_{-t} \circ \geo_{t} \circ \horo_s(x) \cdot \frac{\partial \tau}{\partial s}(s,t,x) \cdot [\psi \circ \tau(s,t,x) ] \diff s.
\end{multline*}
Doing a change of variable and setting $x_{t} = \geo_{t}(x)$, we obtain
\[
\int_0^T f \circ \horo_s(x) \cdot [\psi \circ \tau(s,t,x) ] \diff s = \int_{0}^{\tau(T,t,x)} (f \circ \geo_{-t})\circ \horo_s(x_{t}) \cdot J_{-t} (\horo_s (x_{t})) \cdot \psi(s)\diff s,
\]
and the integral can be rewritten as
\[
\int_{0}^{T} f \circ \horo_s(x) \cdot [\psi \circ \tau(s,t,x) ] \diff s = e^{h_{\topp}t} \int_{0}^{\tau(T,t,x)} \hat{ \cL}_{t} (f) \circ \horo_s(x_{t}) \cdot \psi(s) \diff s.
\]
We now decompose the horocycle arc of length $\tau(T,t,x)$ into arcs of length 1 starting at points $\{x_{t,j}\}_j$ and introduce a smooth partition of unity $\{\rho_j\}_j$ supported on these segments. 
Using the notation introduced in \Cref{lem:integral_is_cont_funct}, we obtain
\[
\int_{0}^{T} f \circ \horo_s(x) \cdot \psi \circ \tau(s,t,x) \diff s = e^{h_{\topp}t} \sum_j \mathcal{I}[x_{t,j},\psi \cdot \rho_j](\hat{ \cL}_{t} (f)). 
\]
We can now use the results in \Cref{sec:spectral_picture}:
the spectral decomposition of \Cref{prop:pert2} at $\omega = 0$ gives
\[
\| \hat{ \cL}_{t} (f) - \Pi_0(f)\|_{\cB_w} \leq \sC \|\hat{Z}f \|_{\cB}e^{-\delta t} \leq \sC e^{-\delta t}\|\hat Z f\|_{\cC^{r-1}}\le \sC e^{-\delta t}\|f\|_{\cC^r},
\]
for some $\delta >0$; moreover, $\Pi_0(f) = \mu(f) \phi_0$. Therefore, we deduce,
\begin{multline*}
\left\lvert \int_{0}^{T} f \circ \horo_s(x) \cdot \psi \circ \tau(s,t,x) \diff s - e^{h_{\topp}t} \sum_j \mathcal{I}[x_{t,j},\psi \cdot \rho_j](f)\right\rvert \\
=\left\lvert \int_{0}^{T} f \circ \horo_s(x) \cdot \psi \circ \tau(s,t,x) \diff s - \mu(f)e^{h_{\topp}t}  \mathcal{I}[x_{t},\psi|_{[0,\tau(T,t,x)]}](\phi_0)  \right\rvert \\
\leq \sC e^{(h_{\topp}-\delta) t} |\tau(T,t,x)| \cdot \|\psi\|_{\mathscr{C}^{1+\alpha}}\|f\|_{\cC^r}\leq \sC T e^{-\delta t}\|f\|_{\cC^r},
\end{multline*}
where in the last line we have used Lemma \ref{lemma:Marcus}.
Then, as in the proof of \Cref{prop:estimate_on_tau}, we deduce 
\[
\left\lvert  \int_0^T f\circ \horo_s(x)\diff s - \mu(f)e^{h_{\topp}t} \varpi(\tau(T,t,x),x_t) \right\rvert \leq \sC (e^{h_{\topp}t} +T e^{-\delta t})\|f\|_{\cC^r} =  \sC  T^{1-\delta_{\ast}} \|f\|_{\cC^r}.
\]
On the other hand, by \Cref{prop:estimate_on_tau}, $|e^{h_{\topp}t} \varpi(\tau(T,t,x),x_t)-T| =|\varpi(T,x)-T| \le \sC T^{1-\delta_{\ast}}$, hence
\[
\left\lvert \frac 1T \int_0^T f\circ \horo_s(x)\diff s - \mu(f) \right\rvert \leq \sC  T^{1-\delta_{\ast}} \|f\|_{\cC^r}.
\]
This completes the proof of \Cref{thm:main_result_3}.
%Choosing $t=\frac{\log T}{h_{\topp}}$ and $B=\dfrac{T}{2C_\tau e^{(2h_{\topp}-\delta)t}} $, so that $B=\dfrac{1}{2C_\tau T^{2-{\delta}/{h_{\topp}}}} $, the theorem is proved with $a\in (0, \min\{1-\frac{\delta}{h_{\topp}}, \frac{\delta_*}{h_{\topp}}\})$.
%%%%%%%%%%%

\subsection{Proof of \Cref{thm:main_result}}\label{sec:proof_main_result}

We are now ready to prove \Cref{thm:main_result}. In the course of the proof, we follow the arguments in \Cref{prop:estimate_on_tau} applied for the \emph{twisted} transfer operators.

Let $f \in \mathscr{C}^{2}_c(\tM)$ and $x \in \tM$ be fixed. Henceforth, we assume that we fixed a fundamental domain $\cF \subset \tM$ containing $x$ and we also take\footnote{Although the final result is independent of this choice, it simplifies the notation.} $x=x_0$ to be the base point in $\cF$ as in \Cref{sec:xi_and_x0}.

By \Cref{lem:proj_cl} and \Cref{lem:proj_Xi}, we can decompose
\begin{equation}\label{eq:split_of_f}
f(x) = \int_{\T^d} f_\omega(x) \diff \omega, \qquad \text{where} \qquad 
f_\omega = \pi_\omega(f) = \Xi_{-\omega} \circ \pi_0 \circ \Xi_{\omega}(f)
\end{equation}
We will write
\[
f_\omega = \Xi_{-\omega} (u_\omega), \qquad \text{where} \qquad u_\omega = \pi_0 \circ \Xi_{\omega}(f) \in \mathscr{C}^{2}(M,0) = \mathscr{C}^{2}(M).
\]
\begin{lemma}\label{lem:norm_u_omega}
The $\mathscr{C}^2$-norm of $u_\omega$ is bounded by a constant $C(f)$ depending on $f$ only (namely, on the $\mathscr{C}^2$-norm of $f$ and on the diameter of its support). Moreover, the norm of the gradient of $\omega \mapsto u_\omega$ is bounded by a constant $C(f,x)$ that depends on $C(f)$ and on the distance between $x$ and the support of $f$.
\end{lemma}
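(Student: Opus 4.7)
The plan is to work directly from the explicit formula obtained by unfolding the definitions of $\pi_0$ and $\Xi_\omega$: for any $y$ in the fundamental domain $\cF$,
\[
u_\omega(y) = \sum_{D\in\Deck} f(D(y))\, e^{2\pi\imath \xi_\omega(D(y))}.
\]
Since $f\in \mathscr{C}^2_c(\tM)$ has compact support, only those $D$ with $D(\cF)\cap \supp(f)\neq \emptyset$ contribute; the action of $\Deck$ being cocompact, the number $N(f)$ of such $D$ is bounded in terms of $\mathrm{diam}(\supp f)$ alone (uniformly in $y\in \cF$). I will fix a compact set of representatives in $\cH$ for the classes in $\T^d$, so that $\|\omega\|_{\mathscr{C}^1}$ is uniformly bounded.

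For the $\mathscr{C}^2$-bound I will differentiate the sum termwise along two arbitrary smooth vector fields on $M$. Each $D$ is an isometry, so derivatives of $f\circ D$ are controlled by $\|f\|_{\mathscr{C}^2}$. For the exponential factor, the key identity $W \xi_\omega = \langle \omega, W\rangle$, valid because $\xi_\omega$ is the integral of the exact form $p^\ast\omega$, gives $\|W\xi_\omega\|_\infty \leq \sC \|\omega\|_{\mathscr{C}^1}$, and similarly for higher order derivatives using smoothness of $\omega$. Summing the $N(f)$ nonzero terms produces a bound $\|u_\omega\|_{\mathscr{C}^2(M)} \leq C(f)$ depending only on $\|f\|_{\mathscr{C}^2}$ and $\mathrm{diam}(\supp f)$.

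For the gradient in $\omega$, differentiation in a direction $\eta\in\cH$ brings down a factor $2\pi\imath\, \xi_\eta(D(y))$:
\[
\partial_\eta u_\omega(y) = 2\pi\imath \sum_{D\in\Deck} f(D(y))\, \xi_\eta(D(y))\, e^{2\pi\imath \xi_\omega(D(y))}.
\]
This is where the dependence on $x$ enters. Using the $D$-equivariance $D^\ast p^\ast\eta = p^\ast\eta$ (a consequence of $p\circ D = p$), as in the proof of \Cref{lem:proj_Xi}, I will obtain the cocycle identity $\xi_\eta(D(y)) = \xi_\eta(D(x_0)) + \xi_\eta(y)$. The second summand is harmless since $y$ ranges in $\cF$; the first summand grows linearly with $\|D\|$, since $|\xi_\eta(D(x_0))|\leq \sC \|\eta\|_\infty \cdot d(x_0, D(x_0))$ and $d(x_0, D(x_0))$ is quasi-isometric to the word norm $\|D\|$ on $\Deck\simeq \Z^d$.

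The main obstacle is precisely this linear growth of $\xi_\eta(D(x_0))$, which is a priori unbounded in $D$. The crucial observation is that compact support of $f$ restricts the contributing $D$'s to those with $D(\cF)\cap \supp(f)\neq \emptyset$, which, thanks to our choice $x=x_0\in\cF$, forces $d(x_0, D(x_0)) \leq \mathrm{dist}(x,\supp f) + \mathrm{diam}(\supp f) + \mathrm{diam}(\cF)$. Thus $\|D\|$ is bounded by a constant depending on $\mathrm{dist}(x,\supp f)$ and $\mathrm{diam}(\supp f)$, and the conclusion $\|\partial_\eta u_\omega\|_{\mathscr{C}^2(M)} \leq C(f,x)$ follows by the same termwise differentiation as in the second step.
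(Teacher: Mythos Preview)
Your proof is correct and follows essentially the same approach as the paper's: write out the explicit sum $u_\omega(y)=\sum_D f(D(y))\,e^{2\pi\imath\xi_\omega(D(y))}$, observe that only finitely many $D$ contribute (those with $D(y)\in\supp f$), differentiate termwise in the spatial variable for the $\mathscr{C}^2$ bound, and differentiate in $\omega$ for the gradient bound. The only minor difference is that where you invoke the cocycle identity $\xi_\eta(D(y))=\xi_\eta(D(x_0))+\xi_\eta(y)$ and then estimate $d(x_0,D(x_0))$, the paper bounds $|\xi_\eta(D(y))|\leq \sC\|\eta\|_\infty\, d(x_0,D(y))$ directly and uses that $D(y)\in\supp f$ for the nonzero terms; this is slightly more direct but equivalent to what you wrote.
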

\begin{proof}
Recall that, by \Cref{lem:proj_Xi}, we have
\[
u_\omega (y) = \sum_{D \in \Deck} f\circ D(y) \cdot \exp \left(2 \pi \imath \int_{x_0}^{D(y)} p^{\ast} \omega\right),
\]
and the sum above is finite. The first claim is proved as in \Cref{lem:proj_Xi}. On the other hand, differentiating the expression above with respect to $\omega$ in direction, say, $\eta$ amounts to 
\[
D_{\eta} u_\omega (y)= \sum_{D \in \Deck} f\circ D(y) \cdot \left(2 \pi \imath \int_{x_0}^{D(y)} p^{\ast} \eta \right) \exp\left(2 \pi \imath \int_{x_0}^{D(y)} p^{\ast} \omega\right).
\]
The term in brackets can be bounded by a constant depending only on the distance between $x_0$ and $D(y)$, and the only non zero summands in the sum are those for which $D(y)$ is in the support of $f$. This implies the claim.
\end{proof}

We fix $\omega \in \T^d$ and study the horocycle ergodic integral of $f_\omega$ up to time $T\geq \sC$. 
We also fix $t > 1$ so that 
\begin{equation}\label{eq:choice_of_t}
e^{h_{\topp}t}  = \frac{T}{(\log T)^d}.
\end{equation}
The choice of the exponent $d$ above is not important, as long as it is larger than $(d+1)/2$.
We now follow the same argument as in the proof of \Cref{prop:estimate_on_tau}.
 
By \Cref{lem:proj_cl}, there exists a constant $C(f)$ depending on $f$ only and independent of $\omega$, so that $\|f_\omega\|_{\infty} \leq C(f)$. 
Thus, by \Cref{lem:smoothened_erg_int}, we have
\begin{equation}\label{eq:approx_1}
\left\lvert \int_{0}^{T} f_\omega \circ \horo_s(x) \diff s - \int_{0}^{T} f_\omega \circ \horo_s(x) \cdot [\psi \circ \tau(s,t,x) ]\diff s \right\rvert \leq \sC C(f) \frac{T}{(\log T)^d}.
\end{equation}
We now focus on the second integral above:
\begin{multline*}
\int_0^T f_\omega \circ \horo_s(x) \cdot [\psi \circ \tau(s,t,x) ] \diff s = \int_0^T f_\omega \circ \geo_{-t}  \circ \geo_{t} \circ \horo_s(x)\cdot [\psi\circ \tau(s,t,x) ] \diff s \\
 = \int_0^T f_\omega \circ \geo_{-t} \circ \horo_{\tau(s,t,x)} \circ \geo_{t}(x) \cdot J_{-t} \circ \geo_{t} \circ \horo_s(x) \cdot \frac{\partial \tau}{\partial s}(s,t,x) \cdot [\psi \circ \tau(s,t,x) ] \diff s,
\end{multline*}
hence, doing a change of variable and setting $x_{t} = \geo_{t}(x)$, we obtain
\[
\int_0^T f_\omega \circ \horo_s(x) \cdot [\psi\circ \tau(s,t,x) ] \diff s = \int_{0}^{\tau(T,t,x)} (f_\omega \circ \geo_{-t})\circ \horo_s(x_t) \cdot J_{-t} (\horo_s (x_{t})) \cdot \psi(s)\diff s.
\]
Note that, recalling \eqref{eq:TTO}, the integral in the right hand side above contains
\[
\cL_{t} f_\omega \circ \horo_s = [\cL_{t} \circ \Xi_{-\omega}(u_\omega)] \circ \horo_s =  [\Xi_{-\omega} \circ \cL^{(\omega)}_{t} (u_{\omega})]\circ \horo_s;
\]
therefore, for the normalized transfer operator $\cL_{t}^{(\omega)}  = e^{h_{\topp}t} \hat{\cL}_{t}^{(\omega)} $, we can rewrite
\[
\int_0^T f_\omega \circ \horo_s(x) \cdot [\psi \circ \tau(s,t,x) ] \diff s = e^{h_{\topp}t} \int_{0}^{\tau(T,t,x)} e^{- 2 \pi \imath \xi_{\omega}(\horo_s(x_{t}))} \cdot \hat{\cL}^{(\omega)}_{t} (u_{\omega}) \circ \horo_s(x_{t}) \cdot \psi(s) \diff s.
\]
We introduce a smooth partition of unity $\{\rho_j\}_{j \in \mathcal{J}}$ on the horocycle segment starting at $x_t$ of length $\tau(T,t,x)$ so that each $\rho_j$ is supported on a subarc $I_j$ of length 1 starting at $x_{t,j}$, and $|\mathcal{J}|\leq \sC\tau(T,t,x) \leq \sC (\log T)^d$ by our choice \eqref{eq:choice_of_t}. Moreover, since $I_j$ is of length 1, $\|\rho_j\|_{C^{1+\operatorname{Lip}}}\le C$ for some uniform $C<\infty$.
We have
\[
e^{- 2 \pi \imath \xi_{\omega}(\horo_s(x_{t,j}))} = e^{- 2 \pi \imath \xi_{\omega}(x_{t,j})} \cdot \exp\left(- 2 \pi \imath \int_{x_{t,j}}^{\horo_s(x_{t,j})}p^{\ast}\omega \right),
\]
thus, denoting $\psi_j$ the restriction of $\psi$ to $I_j$, we deduce
\[
\begin{split}
\int_0^T f_\omega \circ \horo_s(x) \cdot [\psi \circ \tau(s,t,x) ] \diff s &= e^{h_{\topp}t} \sum_{j \in \mathcal{J}} \int_{0}^{1} e^{- 2 \pi \imath \xi_{\omega}(\horo_s(x_{t,j}))} \cdot \hat{\cL}^{(\omega)}_{t} (u_{\omega}) \circ \horo_s(x_{t}) \cdot \psi_j(s) \cdot \rho_j(s) \diff s \\
 &= e^{h_{\topp}t} \sum_{j \in \mathcal{J}} e^{- 2 \pi \imath \xi_{\omega}(x_{t,j})}\int_{0}^{1} \hat{\cL}^{(\omega)}_{t} (u_{\omega}) \circ \horo_s(x_{t,j}) \cdot \Psi_j(s,\omega) \diff s,
\end{split}
\]
where we defined
\begin{equation}\label{eq:def_of_Psi}
\Psi_j(s, \omega) := \exp\left(- 2 \pi \imath \int_{x_{t,j}}^{\horo_s(x_{t,j})}p^{\ast}\omega \right)\cdot \psi_j(s)\cdot \rho_j(s).
\end{equation}
We note that, by \Cref{lem:smoothened_erg_int}, the $\mathscr{C}^{1+\Lip}$-norm of the function $\Psi_j(s, \omega)$ is \emph{uniformly bounded} in $t$ and $j$. 
Moreover, the integrands $\hat{\cL}^{(\omega)}_{t} (u_{\omega}) \cdot \Psi_j(s, \omega)$ are $\Deck$-invariant, hence we can replace $x_{t,j}\in \tM$ by $p(x_{t,j}) \in M$.
To sum up, using the notation of \Cref{lem:integral_is_cont_funct}, from \eqref{eq:split_of_f} and \eqref{eq:approx_1} we deduced
\begin{equation}\label{eq:approx_2}
	\left\lvert \int_{0}^{T} f \circ \horo_s(x) \diff s - e^{h_{\topp}t} \sum_{j \in \mathcal{J}} \int_{\T^d} e^{- 2 \pi \imath \xi_{\omega}(x_{t,j})} \mathcal{I}[p(x_{t,j}),\Psi_j(\cdot, \omega)]\left(\hat{\cL}^{(\omega)}_{t} u_{\omega} \right) \diff \omega \right\rvert \leq \sC C(f) \frac{T}{(\log T)^d}.
\end{equation}
We now want to exploit the spectral results of \Cref{sec:spectral_picture}. Let $B(0,R) \subset \T^d$ denote the ball of centre 0 and radius $R$ in $\T^d$. By \Cref{prop:pert2}, there exists $\delta >0$ for which the following facts hold:
\begin{enumerate}
\item for all $\omega \notin B(0,\delta)$, 
\[
\|\hat{\cL}^{(\omega)}_{t} u_\omega\|_{\cB_w} \leq \sC e^{-\delta t } \|\hat Z_{\omega} u_\omega\|_{\cB} \leq \sC e^{-\delta t } \|u_\omega\|_{\mathscr{C}^2}\leq \sC C(f,x) e^{-\delta t},
\]
where we used \Cref{lem:generator} in the second inequality and \Cref{lem:norm_u_omega} in the third;
\item for all $\omega \in B(0,\delta)$, 
\[
\|\hat{\cL}^{(\omega)}_{t} u_\omega - e^{z(\omega) t } \Pi_{\omega} u_\omega\|_{\cB_w} \leq \sC e^{-\delta t } \|\hat Z_{\omega} u_\omega\|_{\cB} \leq \sC C(f,x) e^{-\delta t }.
\]
\end{enumerate}
Hence, by \Cref{lem:integral_is_cont_funct}, our choice of $t$, recalling that $|\mathcal{J}| \leq \sC (\log T)^d$ and since $\|\Psi_j(\cdot, \omega)\|_{\mathscr{C}^{1+\Lip}} \leq \sC$, from \eqref{eq:approx_2} we deduce
\begin{multline*}
\left\lvert \int_{0}^{T} f \circ \horo_s(x) \diff s - e^{h_{\topp}t} \sum_{j \in \mathcal{J}} \int_{B(0,\delta)} e^{- 2 \pi \imath \xi_{\omega}(x_{t,j}) + z(\omega) t } \mathcal{I}[p(x_{t,j}),\Psi_j(\cdot, \omega)]\left(\Pi_{\omega} u_\omega \right) \diff \omega \right\rvert \\ \leq \sC C(f,x) \left( e^{(h_{\topp}- \delta )t} |\mathcal{J}| + \frac{T}{(\log T)^d}\right) \leq \sC C(f,x) \frac{T}{(\log T)^d}. 
\end{multline*}
If we define 
\[
A_j(\omega) := \mathcal{I}[p(x_{t,j}),\Psi_j(\cdot, \omega)]\left(\Pi_{\omega} u_\omega \right),
\]
then we can rewrite the inequality above as 
\begin{equation}\label{eq:approx_4}
	\left\lvert \int_{0}^{T} f \circ \horo_s(x) \diff s -e^{h_{\topp}t} \sum_{j \in \mathcal{J}} \int_{B(0,\delta)} e^{- 2 \pi \imath \xi_{\omega}(x_{t,j}) +z(\omega) t } A_j(\omega) \diff \omega \right\rvert  \leq \sC C(f,x) \frac{T}{(\log T)^d}. 
\end{equation}
The key observation is the following.
\begin{lemma}\label{lem:smoothness_of_A}
	The functions $\omega \mapsto A_j(\omega)$ are analytic in $\omega$, their derivatives are bounded uniformly in $T$ and $j$, and depend on $f$ and on the distance between $x$ and $\supp(f)$. Furthermore, $A_j(0) = \mu(f) \cdot  \mathcal{I}[p(x_{t,j}),\psi_j \, \rho_j](\phi_0)$.
\end{lemma}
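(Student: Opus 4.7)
My plan is to verify the three claims in turn, using the fact that $A_j(\omega)$ factors through three pieces that are each analytic in $\omega$.

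\textbf{Analyticity.} Writing
\[
A_j(\omega) = \mathcal{I}[p(x_{t,j}), \Psi_j(\cdot, \omega)]\bigl(\Pi_\omega u_\omega\bigr),
\]
I would treat each factor separately. The test function $\Psi_j(\cdot, \omega)$, defined in \eqref{eq:def_of_Psi}, depends on $\omega$ only through $\exp\bigl(-2\pi\imath \int_{x_{t,j}}^{\horo_s(x_{t,j})} p^\ast \omega\bigr)$, which is entire in $\omega$ with values in $\mathscr{C}^{1+\Lip}([0,1])$. The map $\omega \mapsto u_\omega$ is analytic into $\mathscr{C}^2(M)$, since the finite sum $u_\omega(y) = \sum_{D\in\Deck} f \circ D(y) \exp(2\pi\imath \int_{x_0}^{D(y)} p^\ast\omega)$ from the proof of \Cref{lem:norm_u_omega} is holomorphic in $\omega$. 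The map $\omega \mapsto \Pi_\omega$ is analytic into the bounded operators on $\cB$ by \Cref{prop:pert2}(C) (standard Kato perturbation theory for isolated simple eigenvalues). Finally, the pairing $\mathcal{I}[y,\cdot]$ is bounded on $\mathscr{C}^{1+\alpha} \times \cB_w$ by \Cref{lem:integral_is_cont_funct}, and is analytic in the test function in the obvious way. Composition yields analyticity of $A_j$.

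\textbf{Uniform derivative bounds.} Applying the Leibniz rule to differentiate in direction $\eta \in \cH$ at $\omega$, I obtain three summands, each of which I bound uniformly in $T$ and $j$. First, $D_\eta \Psi_j(\cdot,\omega)$ picks up a factor $-2\pi\imath \int_{x_{t,j}}^{\horo_s(x_{t,j})} p^\ast\eta$, which is uniformly bounded on the unit-length segment $I_j$ by $\|\eta\|_\infty$; thus $\|D_\eta \Psi_j\|_{\mathscr{C}^{1+\alpha}}$ is uniformly bounded. Second, by Cauchy estimates applied to the analytic family $\omega \mapsto \Pi_\omega$ on the compact neighborhood $\mathcal{N}_0$ of \Cref{prop:pert2}, the operator norm $\|D_\eta \Pi_\omega\|_{\cB \to \cB}$ is uniformly bounded. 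Third, \Cref{lem:norm_u_omega} already gives $\|D_\eta u_\omega\|_{\mathscr{C}^2} \leq C(f,x)$, a constant depending only on the $\mathscr{C}^2$ norm of $f$, the diameter of $\supp(f)$, and the distance between $x$ and $\supp(f)$ (which controls the finitely many deck translates $D(y)$ contributing to $u_\omega$). Combining these with the functional norm $\|\mathcal{I}[y,\Psi_j]\|_{\cB_w \to \C} \leq \|\Psi_j\|_{\mathscr{C}^{1+\alpha}}$ from \Cref{lem:integral_is_cont_funct}, the conclusion follows.

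\textbf{Value at zero.} At $\omega=0$, the phase in $\Psi_j$ vanishes and $\Psi_j(s,0) = \psi_j(s) \rho_j(s)$. Also, $\Xi_0 = \operatorname{Id}$, so $u_0 = \pi_0(f)$, the $\Deck$-periodization of $f$ on $M$. Since $\cF$ is a fundamental domain tiling $\tM$ under $\Deck$, the lift identity yields
\[
\mu(u_0) = \int_\cF \pi_0(f) \, \diff\mu = \int_{\tM} f \, \diff\mu = \mu(f).
\]
By the spectral analysis at $\omega=0$ of \Cref{prop:zero_is_simple}, the rank-$1$ projector $\Pi_0$ acts as $\Pi_0 g = \mu(g) \cdot \mathbf{1}$ (the dual eigenvector is $\mu$ and the normalized eigenvector at the simple eigenvalue $z(0)=0$ is the constant function $1$, consistently with the convention used in the proof of \Cref{prop:estimate_on_tau}). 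Therefore $\Pi_0 u_0 = \mu(f)$, a constant, and
\[
A_j(0) = \mathcal{I}[p(x_{t,j}), \psi_j \rho_j](\mu(f)) = \mu(f) \int_0^1 \psi_j(s) \rho_j(s) \, \diff s.
\]
The main technical point is really the uniformity of the Cauchy estimate for $D_\eta \Pi_\omega$, which relies on the analyticity of the resolvent $\omega \mapsto \cR_z^{(\omega)}$ being controlled on the fixed neighborhood $\mathcal{N}_0$ independently of $T$ and $j$; this is already built into the spectral framework of \Cref{prop:pert2}.
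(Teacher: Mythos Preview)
Your proof is correct and follows the paper's approach exactly: you factor $A_j(\omega)$ through the analyticity of $u_\omega$, $\Pi_\omega$, and $\Psi_j(\cdot,\omega)$, invoking \Cref{prop:pert2}(C) for the projector and the length-$1$ horocycle bound for the test function, and you compute $A_j(0)$ via $\Pi_0 u_0 = \mu(\pi_0(f)) = \mu(f)$ by unfolding over the fundamental domain. Your handling of the uniform derivative bounds (Leibniz rule plus Cauchy estimates on $\Pi_\omega$) is a bit more explicit than the paper's terse justification, but the argument is identical.
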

The analyticity of $A_j(\omega)$ might be used to prove a full asymptotic expansion for the horocycle integrals, see \Cref{rk:asymptotic_expansion}. Here, we limit ourselves to the leading term, thus we only use the fact that it is of class $\mathscr{C}^1$.
\begin{proof}
The function $\omega \mapsto u_\omega$ is analytic in $\mathscr{C}^2(M)$ (see \Cref{lem:norm_u_omega}), and hence is an analytic family of vectors in $\cB$. By \Cref{prop:pert2}, so is the family $\omega \mapsto \Pi_\omega u_\omega$. The definition \eqref{eq:def_of_Psi} also shows that $\omega \mapsto \Psi_j(\cdot, \omega)$ are analytic in $\mathscr{C}^{1+\alpha}((0,1))$ and their derivatives are uniformly bounded, since the integral in their definitions is over a horocycle orbit segment of length at most 1. 
This proves the analyticity of $A_j$.

Finally, at $\omega=0$ we have $\Psi = \psi_j \cdot \rho_j$ and, from \Cref{lem:proj_Xi} and the fact that $\Pi_0(\cdot)=\mu(\cdot) \phi_0$,
\[
\Pi_0 u_0 = \mu(\pi_0(f)) \phi_0 =  \left( \int_M  \sum_{D \in \Deck} f\circ D \diff \mu\right) \phi_0 =\left(  \sum_{D \in \Deck} \int_{D(\cF)} f \diff \mu \right) \phi_0= \mu(f)\phi_0.
\]
Therefore,
\[
A_j(0) = \mu(f) \cdot \mathcal{I}[p(x_{t,j}), \psi_j \cdot \rho_j] (\phi_0),
\] 
which completes the proof.
\end{proof}
Let us fix $j \in \mathcal{J}$; we need to study the integral
\[
\int_{B(0,\delta)} e^{- 2 \pi \imath \xi_{\omega}(x_{t,j})+z(\omega) t } A_j(\omega) \diff \omega.
\]
One can use stationary phase methods to write an expansion of the integral above; however, the computations become rather complicated when $d$ increases. We limit ourselves to compute the leading term, and we refer to the computations carried out in \cite{BFRT}, in the case of translation flows.

Let $\Sigma$ denote the positive definite matrix associated to the quadratic form $-\frac{1}{4\pi^2}\sigma$ in \eqref{eq:sigma}. By Taylor's Theorem and \Cref{lem:smoothness_of_A}, we get
\begin{multline*}
	\left\lvert \int_{B(0,\delta)} e^{- 2 \pi \imath \xi_{\omega}(x_{t,j})+z(\omega) t } A_j(\omega) \diff \omega - A_j(0)\int_{B(0,\delta)} e^{- 2 \pi \imath \xi_{\omega}(x_{t,j}) -2\pi^2 t \omega \cdot \Sigma \omega }  \diff \omega \right\rvert \\ 
	\leq \sC C(f,x) \int_{B(0,\delta)} e^{-t \|\omega \|^2  } \|\omega\| \diff \omega 
	\leq \sC C(f,x) t^{-\frac{d+1}{2}} \int_{\R^d} e^{-\|\omega\|^2  } \|\omega\| \diff \omega \\  
	\leq \sC C(f,x) t^{-\frac{d+1}{2}}.
\end{multline*} 
We also notice that we can restrict the domain of integration to the ball $B(0,\delta t^{-1/4})$, since for all $\|\omega\| \geq \delta t^{-1/4}$ the integrand can be bounded by $e^{-2\pi^2 \delta \sqrt{t}}$, which is less than $t^{-\frac{d+1}{2}}$.
Hence, we are reduced to study 
\[
A_j(0)\int_{B(0,\delta t^{-1/4})} e^{- 2 \pi \imath \xi_{\omega}(x_{t,j}) -2\pi^2 t \omega \cdot \Sigma \omega }  \diff \omega.
\]
Let us define $t_{\ast}$ to be the \emph{normalizing time}, that is the positive real 
\begin{equation}\label{eq:choice_t_ast} 
t_{\ast} >0 \quad  \text{ is such that } \quad  \tau(T,t_{\ast},x)=1.
\end{equation}
We observe that $t_{\ast} > t$ and $|t_{\ast} - t| \leq \sC \log t$. Since the 1-form $\omega$ is harmonic, we can write
\[
\xi_{\omega}(x_{t,j}) = \int_x^{x_{t,j}}p^{\ast}\omega =  \int_x^{\geo_{t_{\ast}}(x)}p^{\ast}\omega + \int_{\geo_{t_{\ast}}(x)}^{\geo_{t_{\ast}-t}(x_{t,j})}p^{\ast}\omega + \int_{\geo_{t_{\ast}-t}(x_{t,j})}^{x_{t,j}}p^{\ast}\omega. 
\] 
The first term above is the Frobenius function (the geodesic winding cycle) as in \eqref{eq:F}; the points $\geo_{t_{\ast}}(x)$ and $\geo_{t_{\ast}-t}(x_{t,j})$ in the second integral both lie on a horocycle orbit segment of length 1, and the last integral is over a segment of length $\mathcal{O}(\log t)$. Thus, there exists a vector $\eb \in \R^d$ of norm $\|\eb\|\leq \sC\log t$ so that 
\[
\left\| \xi_{\omega}(x_{t,j})  - \int_x^{\geo_{t_{\ast}}(x)}p^{\ast}\omega \right\| \leq \sC | \omega \cdot \eb|.
\]
Here, we have used the identification of the vector space $\mathcal{H}$ with $\R^d$ with the aid of the basis of 1-forms $\omega_k$ for $k=1,\dots, d$   we fixed in \Cref{sec:twisted_hilbert_spaces}.
Let us define $F_{\ast} = F_{\ast}(x,t_{\ast})$ to be the vector in $\R^d$ whose components are $F_{t_{\ast}, \omega_k}(x) = \int_x^{\geo_{t_{\ast}}(x)}p^{\ast}\omega_k$.
Then, since for all $\omega \in B(0,\delta t^{-1/4})$ we can bound 
\[
|e^{- 2 \pi \imath  \xi_{\omega}(x_{t,j}) } - e^{- 2 \pi \imath  \omega \cdot F_{\ast} } | \leq \sC |\omega \cdot \eb| \leq \sC \|\omega \| \log t,
\]
we deduce 
\begin{multline*}
\left\lvert  \int_{B(0,\delta t^{-1/4})} e^{- 2 \pi \imath  \xi_{\omega}(x_{t,j})  -2\pi^2 t \omega \cdot \Sigma \omega }  \diff \omega - \int_{B(0,\delta t^{-1/4})} e^{- 2 \pi \imath \omega \cdot F_{\ast}-2\pi^2 t \omega \cdot \Sigma \omega }  \diff \omega \right\rvert \\
\leq \sC \log t \int_{B(0,\delta t^{-1/4})} e^{-t \|\omega\|^2  } \|\omega\| \diff \omega \leq \sC (\log t) \, t^{-\frac{d+1}{2}}.
\end{multline*} 
Thus, from \eqref{eq:approx_4} and recalling \eqref{eq:choice_of_t}, we obtain
\begin{multline}\label{eq:approx_5}
	\left\lvert \int_{0}^{T} f \circ \horo_s(x) \diff s -e^{h_{\topp}t}  \left(\sum_{j \in \mathcal{J}} A_j(0)\right)  \int_{B(0,\delta t^{-1/4})} e^{- 2 \pi \imath \omega \cdot F_{\ast}-2\pi^2 t \omega \cdot \Sigma \omega } \diff \omega\right\rvert \\  \leq \sC C(f,x) \frac{T \cdot \log \log T}{(\log T)^{\frac{d+1}{2}}}. 
\end{multline}
Now, by \Cref{lem:smoothness_of_A}, we have
\[
\left\lvert \sum_{j \in \mathcal{J}} A_j(0) -\mu(f) \varpi(\tau(T,t,x),p(x_t)) \right\rvert = |\mu(f)| \left\lvert  \mathcal{I}[p(x_{t}),\psi](\phi_0) -\mathcal{I}[p(x_{t}),\one_{[0,\tau(T,t,x)]}](\phi_0) \right\rvert \leq \sC C(f).
\]
Furthermore, since $|t_{\ast}-t| \leq \sC \log t \leq \sC \log \log T$, we can also replace $t$ with $t_{\ast}$ in \eqref{eq:approx_5}.
Using also \eqref{eq:self_similarity_of_alpha} and \Cref{prop:estimate_on_tau} to bound $|e^{h_{\topp}t} \varpi(\tau(T,t,x),p(x_t))  - T| = |\varpi(T,p(x))  - T|\leq \sC T^{1-\delta_{\ast}}$, we deduce
\begin{equation}\label{eq:approx_6}
	\left\lvert \int_{0}^{T} f \circ \horo_s(x) \diff s - T \mu(f)  \int_{B(0,\delta t_{\ast}^{-1/4})} e^{- 2 \pi \imath \omega \cdot F_{\ast}-2\pi^2 t_{\ast} \omega \cdot \Sigma \omega } \diff \omega\right\rvert  \leq \sC C(f,x) \frac{T \cdot \log \log T}{(\log T)^{\frac{d+1}{2}}}. 
\end{equation}

We can now finish the proof by computing the second integral in the left hand side above. Let $H$ be the symmetric positive definite square root of the matrix $\Sigma$, then
\begin{multline*}
	\int_{B(0,\delta t_{\ast}^{-1/4})} e^{- 2 \pi \imath \omega \cdot F_{\ast} -2\pi^2  t_{\ast} \omega \cdot \Sigma \omega }  \diff \omega
	=  t_{\ast}^{-\frac{d}{2}} \int_{B(0,\delta t_{\ast}^{1/4} )} e^{- 2 \pi \imath \omega \cdot \frac{ F_{\ast}}{\sqrt{ t_{\ast}}} -2\pi^2 (H\omega) \cdot (H\omega) }  \diff \omega \\
	=  t_{\ast}^{-\frac{d}{2}} \int_{\R^d} e^{- 2 \pi \imath \omega \cdot \frac{ F_{\ast}}{\sqrt{ t_{\ast}}} -2\pi^2(H\omega) \cdot (H\omega) }  \diff \omega +O( t_{\ast}^{-d})
	= \frac{1}{ t_{\ast}^{\frac{d}{2}}\sqrt{\det \Sigma}} \int_{\R^d} e^{- 2 \pi \imath \yb \cdot \frac{H^{-1}  F_{\ast}}{\sqrt{ t_{\ast}}} -2\pi^2\yb \cdot \yb }  \diff \yb +O( t_{\ast}^{-1}) \\
	= \frac{1}{(2\pi  t_{\ast})^{\frac{d}{2}}\sqrt{\det \Sigma}} e^{-\frac{1}{2}  \left\| \frac{H^{-1}  F_{\ast}}{\sqrt{ t_{\ast}} } \right\|^2}+O( t_{\ast}^{-d})
	= \frac{1}{(2\pi  t_{\ast})^{\frac{d}{2}}\sqrt{\det \Sigma}}  e^{-\frac{1}{2}  \left\| \frac{ F_{\ast}}{\sqrt{ t_{\ast}} } \right\|_{\Sigma}^2 }+O( t_{\ast}^{-d}).
\end{multline*} 
where, we recall, the norm $\|\cdot\|_{\Sigma}$ on $\R^d$ is defined as $\|\xb\|_{\Sigma}^2 = \xb \cdot \Sigma^{-1} \xb$.

Since $| t_{\ast} - (\log T)/h_{\topp}| \leq\sC$, we can bound $|t_{\ast}^{-\frac{d}{2}} - (h_{\topp}/(\log T))^{\frac{d}{2}}| \leq \sC (\log T)^{-\frac{d}{2}-1}$. 
From \eqref{eq:approx_6} we conclude
\begin{equation*}
	\left\lvert \int_{0}^{T} f \circ \horo_s(x) \diff s -   \frac{h_{\topp}^{\frac{d}{2}} T }{ (2\pi \log T)^{\frac{d}{2}}\sqrt{\det \Sigma}} \left(\int_{\tM} f \diff \mu \right) e^{-\frac{1}{2}  \left\| \frac{ F_{\ast}(p(x),t_{\ast} )}{\sqrt{t_{\ast} } } \right\|_{\Sigma}^2 } \right\rvert  \leq \sC C(f,x) \frac{T \cdot \log \log T}{(\log T)^{\frac{d+1}{2}}}. 
\end{equation*}
The vector $F_{\ast} = F_{\ast}(p(x), t_{\ast} )$ has components $F_{t_{\ast} ,\omega_k}(x)$, where $\omega_k$, for $k=1,\dots,d$ form the basis we fixed in \Cref{sec:twisted_hilbert_spaces}. Thus, the Central Limit Theorem for the geodesic flow implies
\[
\frac{F_{\ast}(\cdot, t_{\ast} )}{\sqrt{t_{\ast} }} \to \mathcal{N}(0, \Sigma) \qquad \text{in distribution.}
\]
The proof of \Cref{thm:main_result} is therefore complete.

%%%%%%%%%%

\subsection{Proof of \Cref{thm:main_result_2}}\label{sec:proof_main_result_2}

Let us fix $x \in \tM$ and $\sigma >0$, and recall that we denoted by $\gamma_{x, \sigma}$ the horocycle segment $s \mapsto \horo_s(x)$ for $s \in [0,\sigma]$. Let $\eta$ be a 1-form of class $\mathscr{C}^2$ on $\tM$ with compact support, and let  $f:= f_\eta \in \mathscr{C}^{2}_c(\tM)$ be given by $f(x) = \langle \eta, U \rangle_x $. We also assume that we fixed a fundamental domain $\cF \subset \tM$ containing $x$ and we take $x=x_0$ to be the base point in $\cF$ as in \Cref{sec:xi_and_x0}.

The proof is analogous to the one presented in \Cref{sec:proof_main_result}. 
By \eqref{eq:potential}, we have
\[
\int_{\geo_{-t} \circ \gamma_{x, \sigma}} \eta = \int_0^{\sigma} \eta_{\geo_{-t} \circ \gamma_{x, \sigma}(s)} \left( D\geo_{-t} U_{\gamma_{x, \sigma}(s)} \right) \diff s = \int_0^{\sigma} f \circ \geo_{-t} \circ \horo_s(x) \cdot J_{-t}(\horo_s(x)) \diff s.
\]
%Let $\delta>0$ be given by \Cref{prop:estimate_on_tau}, and define
Let 
$r = \frac{d}{h_{\topp}} \log t$, so that $e^{-h_{\topp} r} = t^{-d}$.
Calling $x_{-r} = \geo_{-r}(x)$, we rewrite the equation above as 
\[
\begin{split}
\int_{\geo_{-t} \circ \gamma_{x, \sigma}} \eta &= \int_0^{\sigma} \cL_{t} f \circ \horo_s(x) \diff s= \int_0^{\sigma} \cL_{t-r}( f) \circ \geo_{-r} \circ \horo_s(x) \cdot J_{-r}(\horo_s(x_{-r})) \diff s \\
&= \int_0^{\tau(\sigma,-r,x)} \cL_{t-r}(f) \circ \horo_s(x_{-r}) \diff s,
\end{split}
\]
We note that, by \Cref{lemma:Marcus}, $\tau = \tau(\sigma,-r,x)$ satisfies 
\[
C_{\tau}^{-1} \sigma \, e^{h_{\topp} r} \leq \tau \leq C_{\tau} \sigma \,  e^{h_{\topp} r}.
\]
As in the proof of \Cref{lem:smoothened_erg_int}, we define a function $\psi \colon [0, \tau] \to [0,1]$ of class $\mathscr{C}^{1+\Lip}$ with $\|\psi'\|_{\Lip} \leq 16$ and equal to 1 in $[1, \tau-1]$, so that 
\[
\left\lvert e^{-h_{\topp} t}\int_{\geo_{-t} \circ \gamma_{x, \sigma}} \eta - e^{-h_{\topp} r}\int_0^{\tau} \hat{\cL}_{t-r}(f) \circ \horo_s(x_{-r}) \cdot \psi(s) \diff s \right\rvert \leq \sC \|f\|_{\infty} e^{-h_{\topp} r} = \sC \|f\|_{\infty} t^{-d},
\]
where we used the fact that $\int_0^1 \cL_{t-r}(|1|) \circ \horo_s(y) \diff s \leq C_{\tau} e^{h_{\topp} (t-r)}$ for any $y\in \tM$.

As in \Cref{sec:proof_main_result}, let $\{\rho_j\}_{j \in \mathcal{J}}$ be a smooth partition of unity on the horocycle segment starting at $x_{-r}$ of length $\tau$ so that each $\rho_j$ is supported on a subarc $I_j$ of length 1 starting at $x_{j}$, and $|\mathcal{J}|\leq \sC\tau \leq \sC \sigma e^{h_{\topp} r}$. With the same decomposition of $f$ as in \eqref{eq:split_of_f}, we obtain
\[
\begin{split}
\int_0^{\tau} \hat{\cL}_{t-r}(f) \circ \horo_s(x_{-r}) \cdot \psi(s) \diff s &= \sum_{j \in \mathcal{J}} \int_{\T^d} \int_0^1 e^{-2\pi \imath \xi_\omega(\horo_s(x_j))} \cdot \hat{\cL}^{(\omega)}_{t-r} u_{\omega} \circ \horo_s(x_j) \cdot \psi_j(s) \cdot \rho_j(s) \diff s\diff \omega \\
&= \sum_{j \in \mathcal{J}} \int_{\T^d} e^{-2\pi \imath \xi_\omega(x_j)} \cdot \mathcal{I}[p(x_j), \Psi_j(\cdot, \omega)] \Big( \hat{\cL}^{(\omega)}_{t-r} u_{\omega}  \Big) \diff \omega,
\end{split}
\]
where $\psi_j$ is the restriction of $\psi$ to the arc $I_j$ and, as in \eqref{eq:def_of_Psi}, we defined
\[
\Psi_j(s, \omega) := \exp\left(- 2 \pi \imath \int_{x_{j}}^{\horo_s(x_{j})}p^{\ast}\omega \right)\cdot \psi_j(s)\cdot \rho_j(s).
\]
Clearly, $t_0:= t-r > t/2$; hence, following the same steps as in the previous subsection, we use the spectral decomposition from \Cref{prop:pert2} to reduce the integral above to the following: define
\[
A_j(\omega) := \mathcal{I}[p(x_{j}),\Psi_j(\cdot, \omega)]\left(\Pi_{\omega} u_\omega \right),
\]
then,
\[
\left\lvert e^{-h_{\topp} t}\int_{\geo_{-t} \circ \gamma_{x, \sigma}} \eta - e^{-h_{\topp} r} \sum_{j \in \mathcal{J}}  \int_{B(0,\delta)} e^{- 2 \pi \imath \xi_{\omega}(x_{j}) +z(\omega) t_0 } A_j(\omega) \diff \omega  \right\rvert \leq \sC C(f,x) t^{-d}.
\]
We focus on one fixed $j \in \mathcal{J}$. 
By \Cref{lem:smoothness_of_A}, we have
\begin{multline*}
	\left\lvert \int_{B(0,\delta)} e^{- 2 \pi \imath \xi_{\omega}(x_{j})+z(\omega) t_0 } A_j(\omega) \diff \omega - 
A_j(0) \int_{B(0,\delta)} e^{- 2 \pi \imath \xi_{\omega}(x_{j}) -2\pi^2 t_0 \omega \cdot \Sigma \omega }  \diff \omega \right\rvert \\ 
	\leq \sC C(f,x) t_0^{-\frac{d+1}{2}}\leq \sC C(f,x) t^{-\frac{d+1}{2}},
\end{multline*} 
where $\Sigma$ is once again the positive definite matrix associated to the quadratic form $-\frac{1}{4\pi^2}\sigma$ in \eqref{eq:sigma}.

We notice that we can restrict the integral above to $B(0,\delta t_0^{-1/4})$. For all $\omega \in B(0,\delta t_0^{-1/4})$, we have
\[
\left\lvert  \xi_{\omega}(x_{j}) \right\rvert \leq \sC \|\omega\| (r+\sigma) \leq \sC \frac{\log t}{t^{1/4}},
\]
where we used the fact that $x_j$ lies on the segment $\geo_{-r}\circ \gamma_{x,\sigma}$.
Therefore, $|e^{- 2 \pi \imath \xi_{\omega}(x_{j})} -1| \leq \sC \|\omega\| \cdot \log t$ and hence 
\begin{multline*}
	\left\lvert \int_{B(0,\delta)} e^{- 2 \pi \imath \xi_{\omega}(x_{j})+z(\omega) t_0 } A_j(\omega) \diff \omega - %\mu(f) \left(\int_0^1 \psi_j(s) \cdot \rho_j(s) \diff s\right) 
A_j(0) \int_{B(0,\delta t_0^{-1/4})} e^{ -2\pi^2 t_0 \omega \cdot \Sigma \omega }  \diff \omega \right\rvert \\ 
	\leq \sC C(f,x) t_0^{-\frac{d+1}{2}}\leq \sC C(f,x) \frac{\log t}{ t^{\frac{d+1}{2}}}.
\end{multline*} 
The integral on the right hand side above is computed as at the end of  \Cref{sec:proof_main_result}.

Combining everything together, and using the fact that $\sum_j A_j(0) = \mu(f) \cdot \mathcal{I}[x_{-r},\psi](\phi_0)$, we conclude
\[
\left\lvert e^{-h_{\topp} t}\int_{\geo_{-t} \circ \gamma_{x, \sigma}} \eta - \mu(f) \frac{e^{-h_{\topp} r} \cdot \varpi(\tau,x_{-r})}{(2\pi  t_0)^{\frac{d}{2}}\sqrt{\det \Sigma}} \right\rvert \leq \sC C(f,x) \frac{\log t}{ t^{\frac{d+1}{2}}}.
\]
Finally, by  \eqref{eq:self_similarity_of_alpha} applied to the point $\varpi(\tau,x_{-r}) = \varpi(\tau(\sigma,-r,x),\geo_{-r}(x))$, we obtain
\[
e^{-h_{\topp} r} \varpi(\tau,x_{-r}) = \varpi(\sigma,x).
\]
This finishes the proof of \Cref{thm:main_result_2}.

%%%%%%%%%%%%

%%%%%%%%%%%%%%%% Appendix%%%%%%%%%%%%%%%%%%%%%%%%%
\appendix
%%%%%%%%%%%%%%lemma G%%%%%%%%%%
\section{Norm estimates for the winding cycle}\label{sec:app-G}
This Appendix is devoted to the proof of some bounds regarding the winding function defined in \eqref{eq:F}. Firstly, it is convenient to introduce some notations.
For each $\omega \in \T^d$, we define
\begin{equation}\label{eq:AB}
\begin{split}
&A(X,\omega):= \max\{\|\Phi^-\|_{\cC^1}\|\langle X, \omega \rangle\|_{\cC^1}, \|\langle X, \omega \rangle\|_{\cC^1}^2, \|\langle X, \omega \rangle\|_{\cC^2}\}\\
&B(\bar k):= \max\{(-\overline k)^{-\frac 12}, (-\overline k)^{-1}\},
\end{split}
\end{equation}\label{eq:Cvar}
where ${\overline k}<0$ is given in Lemma \ref{lemma:Jacobi} and $\Phi^-$ in \eqref{eq:div}.

\begin{proposition}\label{prop:G}
Setting 
\[
G_{t,\omega}(x)=\exp\left( 2\pi \imath \int_0^t \langle \omega, X\rangle\circ \geo_{-a}(x)\diff a  \right),
\]
then for each $t\in \R^+$, $ I\subset \cI_\rho, \omega \in \T^d, \psi\in \cC^r_c(I)$ with $\|\psi\|_{\cC^r(I)}\le C_*$ and $\varphi\in \cC^{1+\alpha}_c(I)$, we have
\[
 \|(\varphi G_{t,\omega})\circ \geo_t\circ \horo_{(\cdot)}\circ \psi \|_{\cC^{\varsigma}(I)}  \le C_\varsigma(t) \|\varphi\|_{\cC^\varsigma(I)}, \quad \varsigma \in \{0,\alpha, 1+\alpha\},
\]
where
\begin{equation}\label{eq:C(t)}
C_\varsigma(t)=\begin{cases}  1 \qquad &\text{if}\qquad  \varsigma=0,\\ 					   
					    2\pi C_*\big(\frac{1 }{\sqrt{-\overline k}}\|\langle \omega, X \rangle \|_{\cC^1} +e^{-\sqrt{-\overline k}\alpha t}\big) \qquad &\text{if} \qquad\varsigma=\alpha,\\
					   C_*+8\pi^2\max\{C_*^3,C_*^4\} B(\overline k)A(X,\omega)\big(1+4e^{-\sqrt{-\overline k}(1+\alpha)t}\big) \qquad &\text{if} \qquad\varsigma=1+\alpha.
		\end{cases}
\end{equation}

\end{proposition}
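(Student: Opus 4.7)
The plan is to handle the three cases by reducing $\eta$-derivatives of the composition to Lie derivatives along $U$ at the translated points, using two recurring tools: the renormalization $\geo_t\circ\horo_\eta(x)=\horo_{\tau(\eta,t,x)}\circ\geo_t(x)$ together with $\partial_\eta\tau(\eta,t,x)=J_t(\horo_\eta(x))$ (Lemma~\ref{lem:renormalization} and Corollary~\ref{cor:inverse_of_Jt}), and the contraction $|J_t|\le e^{-\sqrt{-\overline k}\,t}$ of Lemma~\ref{lemma:Jacobi} combined with the identity $U(f\circ\geo_{-a})=J_{-a}\cdot(Uf)\circ\geo_{-a}$. Writing $\mathcal{H}_{t,\omega}(x)=\int_0^t\langle\omega,X\rangle\circ\geo_{-a}(x)\,\diff a$ so that $G_{t,\omega}=e^{2\pi\imath\mathcal{H}_{t,\omega}}$, this commutation turns successive Lie derivatives of $\mathcal H_{t,\omega}$ into absolutely convergent integrals whose sup-norms are controlled by $\|\langle\omega,X\rangle\|_{\cC^k}$ times the scalar integrals $\int_0^\infty e^{-\sqrt{-\overline k}a}\,\diff a=(-\overline k)^{-1/2}$ or $\int_0^\infty\!\int_0^a e^{-\sqrt{-\overline k}(a+s)}\,\diff s\,\diff a=(-\overline k)^{-1}$.

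The case $\varsigma=0$ is immediate from $|G_{t,\omega}|\equiv1$. For $\varsigma=\alpha$ I apply the product rule to $\varphi\cdot G_{t,\omega}$ at the point $\geo_t\circ\horo_\eta(x)$. The Hölder norm of the $\varphi$-factor picks up $e^{-\sqrt{-\overline k}\alpha t}$ because the two renormalized points $\horo_{\tau(\eta_i,t,x)}\geo_t(x)$ lie at mutual stable-manifold distance $\le e^{-\sqrt{-\overline k}t}|\eta_1-\eta_2|$; the Hölder norm of the $G$-factor is controlled by $|e^{\imath a}-e^{\imath b}|\le|a-b|$ together with the sup-estimate $\|U\mathcal H_{t,\omega}\|_\infty\le\|\langle\omega,X\rangle\|_{\cC^1}/\sqrt{-\overline k}$; multiplying by $\psi$ with $\|\psi\|_{\cC^r}\le C_*$ produces $C_\alpha(t)$.

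For the hardest case $\varsigma=1+\alpha$ I differentiate once:
\[
\tfrac{d}{d\eta}\bigl[(\varphi G_{t,\omega})\circ\geo_t\circ\horo_\eta(x)\bigr]=J_t(\horo_\eta(x))\cdot\bigl[U(\varphi G_{t,\omega})\bigr]\circ\geo_t\circ\horo_\eta(x),
\]
expand $U(\varphi G_{t,\omega})=(U\varphi)G_{t,\omega}+2\pi\imath\varphi G_{t,\omega}\cdot U\mathcal H_{t,\omega}$, and bound the $\cC^\alpha$-norm in $\eta$ of each summand (then add the analogous estimates from the $\psi'$-terms via Leibniz). The new estimates required are on $U^2\mathcal H_{t,\omega}$, which by applying the commutation identity twice splits into one term with integrand $J_{-a}(UJ_{-a})U\langle\omega,X\rangle\circ\geo_{-a}$ — producing, via $J_{-a}=\exp\int_0^a\Phi^-\circ\geo_{-s}\,\diff s$, the contribution $\|\Phi^-\|_{\cC^1}\|\langle\omega,X\rangle\|_{\cC^1}$ — and one with $J_{-a}^2\,U^2\langle\omega,X\rangle\circ\geo_{-a}$ producing $\|\langle\omega,X\rangle\|_{\cC^2}$; in addition, differentiating the exponential twice yields $U^2 G_{t,\omega}\supset-(2\pi)^2G_{t,\omega}(U\mathcal H_{t,\omega})^2$, producing $\|\langle\omega,X\rangle\|_{\cC^1}^2$.

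The main obstacle I anticipate is the bookkeeping in this last case. The three summands of $A(X,\omega)$ correspond precisely to the three contributions just listed; $B(\overline k)$ collects the one- and two-fold integrals of $e^{-\sqrt{-\overline k}a}$; the powers $C_*^3$ and $C_*^4$ arise from the Leibniz rule applied to the triple product $\varphi\cdot G_{t,\omega}\cdot\psi$ after differentiation (with one extra $C_*$ from the $\cC^{1+\Lip}$-norm of $\psi$ and additional $C_*$-factors when commuting $\cC^\alpha$-norms through products); and the residual factor $e^{-\sqrt{-\overline k}(1+\alpha)t}$ comes from multiplying the prefactor $J_t$ (contributing $e^{-\sqrt{-\overline k}t}$) by the Hölder factor $e^{-\sqrt{-\overline k}\alpha t}$ inherited from the $\varphi$-renormalization as in the $\varsigma=\alpha$ case. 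The lone additive $C_*$ in $C_{1+\alpha}(t)$ is the contribution of the term $(\varphi G_{t,\omega})\cdot\psi'$, which carries no exponential decay since the derivative falls on $\psi$.
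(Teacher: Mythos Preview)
Your approach is essentially the paper's: both compute $\eta$-derivatives via the chain rule $\frac{d}{d\eta}[F\circ\geo_t\circ\horo_\eta]=J_t(\horo_\eta(x))\cdot(UF)\circ\geo_t\circ\horo_\eta$ and the contraction $|J_t|\le e^{-\sqrt{-\overline k}t}$, then apply Leibniz. The paper organizes things slightly differently --- it first separates $\varphi$ from $G_{t,\omega}$ via the product inequalities $\|\varphi G\|_{\cC^\alpha}\le\|\varphi\|_{\cC^0}\|G\|_{\cC^1}+\|\varphi\|_{\cC^\alpha}\|G\|_{\cC^0}$ and $\|\varphi G\|_{\cC^{1+\alpha}}\le\|\varphi\|_{\cC^{1+\alpha}}(1+4\|G\|_{\cC^2})$, and then bounds $\|G_{t,\omega}\circ\geo_t\circ\horo_{(\cdot)}\circ\psi\|_{\cC^k}$ directly for $k=1,2$ --- whereas you expand the product derivative at once; these are equivalent reorderings of the same computation.

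One point needs care, however. The claim $\|U\mathcal H_{t,\omega}\|_\infty\le\|\langle\omega,X\rangle\|_{\cC^1}/\sqrt{-\overline k}$ is false as written: $U\mathcal H_{t,\omega}=\int_0^t J_{-a}\cdot(U\langle\omega,X\rangle)\circ\geo_{-a}\,\diff a$ and $J_{-a}\ge e^{\sqrt{-\overline k}a}$ \emph{grows}. What is bounded is the $\eta$-derivative of $\mathcal H_{t,\omega}\circ\geo_t\circ\horo_\eta$, i.e.\ the product $J_t(\horo_\eta(x))\cdot(U\mathcal H_{t,\omega})(\geo_t\circ\horo_\eta(x))$, because the cocycle identity $J_t(z)\,J_{-a}(\geo_t z)=J_{t-a}(z)$ replaces the growing $J_{-a}$ by the decaying $J_{t-a}$, and only then does $\int_0^t|J_{t-a}|\,\diff a\le(-\overline k)^{-1/2}$ apply. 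The same caveat applies verbatim to your discussion of $U^2\mathcal H_{t,\omega}$ in the $\varsigma=1+\alpha$ case. You clearly have the right mechanism in mind (you keep the $J_t$ prefactor explicitly in your displayed formula), so this is a slip rather than a gap --- but it is exactly why the paper differentiates the \emph{composed} object $G_{t,\omega}\circ\geo_t\circ\horo_{(\cdot)}$ directly rather than $G_{t,\omega}$ alone, and your write-up should do the same.
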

\begin{proof}
The case $\varsigma=0$ is trivial since $ \|G_{t,\omega}\|_{\cC^0}=1$.
Let us use the symbol $^\prime$ to denote the derivative with respect to $\eta \in I$. We have
\[
\begin{split}
((\varphi G_{t,\omega})\circ & \geo_t\circ \horo_\eta\circ \psi )'\\
=&(G_{t,\omega}\circ \geo_t\circ \horo_\eta \circ \psi)'\cdot (\varphi\circ \geo_t\circ \horo_\eta \circ \psi)+ G_{t,\omega}\circ \geo_t\circ \horo_\eta \circ \psi\cdot (\varphi\circ \geo_t\circ \horo_\eta \circ \psi)'\\
=&2\pi \imath\psi'\left(  \int_0^t (D\geo_{t-a}(U) \langle \omega, X\rangle)\circ \geo_{t-a}\circ \horo_\eta\circ \psi \diff a   \right) (G_{t,\omega}\cdot\varphi)\circ \geo_t\circ \horo_\eta \circ \psi\\
&+G_{t,\omega}\circ \geo_t\circ \horo_\eta \circ \psi\cdot \psi' (D\geo_t(U)\varphi)\circ \geo_t\circ \horo_\eta \circ \psi.
\end{split}
\]
Recalling that $D\geo_tU=J_t U$ and $J_{-t}=D\geo_{-t}(U)$, we note that
\begin{equation}\label{eq:great}
(\varphi\circ \geo_t\circ \horo_\eta \circ \psi)'=\psi' (D\geo_t(U)\varphi)\circ \geo_t\circ \horo_\eta \circ \psi= \psi' J_t (U\varphi)\circ \geo_t\circ \horo_\eta \circ \psi.
\end{equation}
Therefore, since $\|\psi\|_{\cC^r}\le C_*$ and $\|G_{t,\omega}\|_{\cC^0}=1$, we have
\[
\begin{split}
|((\varphi G_{t,\omega})\circ \geo_t\circ \horo_\eta\circ \psi )'| 
&\le 2\pi C_*\|\varphi\|_{\cC^0}\|\langle \omega, X \rangle \|_{\cC^1}\int_0^t |J_{t-a} \circ \horo_\eta \circ \psi |\diff  a+C_*|J_t\circ \horo_\eta|\|\varphi\|_{\cC^1}.
\end{split}
\]
By Lemma \ref{lemma:Jacobi}, $|J_t(x)|\le e^{-\sqrt{-\overline k}t}$ for each $x\in M$, and
\begin{equation}\label{eq:intJ}
\left|\int_0^t J_{t-a}\circ h_\eta \diff a\right|\le \int_0^t e^{-\sqrt{-\overline k}(t-a)}\diff a \le \dfrac{1}{\sqrt{-\overline k}},
\end{equation}
therefore,
\begin{equation}\label{eq:C1G}
\begin{split}
\|(\varphi G_{t,\omega})\circ \geo_t\circ \horo_\eta\circ \psi \|_{\cC^1} 
&\le \frac{2\pi C_*}{\sqrt{-\overline k}}\|\langle \omega, X \rangle \|_{\cC^1} \|\varphi\|_{\cC^0}+C_*e^{-\sqrt{-\overline k}t}\|\varphi\|_{\cC^1}\\
&\le C_*\left(\frac{2\pi  }{\sqrt{-\overline k}}\|\langle \omega, X \rangle \|_{\cC^1} +e^{-\sqrt{-\overline k}t}\right)\|\varphi\|_{\cC^1}.
\end{split}
\end{equation}
For $\varsigma=\alpha$ it is enough to observe that, for each $\eta,\tilde \eta \in (0,\rho)$,
\begin{equation}\label{eq:Holder}
\begin{split}
|(\varphi \circ \geo_t\circ \horo_{(\cdot)}\circ \psi)(\tilde\eta)- (\varphi \circ \geo_t\circ \horo_{(\cdot)}\circ \psi_{\tilde\eta})(\eta)| 
&\le d_{I}(\geo_t\circ \horo_\eta\circ \psi,\geo_t\circ \horo_{\tilde \eta}\circ \psi )^{\alpha} \|\varphi\|_{\cC^\alpha}  \\
&\le C_* e^{-\sqrt{-\overline k} \alpha t}|\eta-\tilde \eta|^{\alpha}  \|\varphi\|_{\cC^\alpha},
\end{split}
\end{equation}
where $d_I(x,y)$ is the distance of two points $x,y$ along $I$. Hence, since $G_{t,\omega}\in \cC^r$, using the inequality
\begin{equation}\label{eq:Calpha}
\|\varphi G\|_{\cC^\alpha} \le \|\varphi\|_{\cC^0}\|G\|_{\cC^1}+\|\varphi\|_{\cC^\alpha}\|G\|_{\cC^0},
\end{equation}
we have
\[
\begin{split}
\|(\varphi G_{t,\omega})\circ & \geo_t\circ \horo_\eta\circ \psi \|_{\cC^\alpha(I)}\le \\
&\le \|G_{t,\omega}\circ \geo_t\circ \horo_\eta\circ \psi\|_{\cC^1}\|\varphi\|_{\cC^0}+\|G_{t,\omega}\|_{\cC^0}\|\varphi\circ \geo_t\circ \horo_\eta\circ \psi\|_{\cC^{\alpha}}\\
&\le  C_*\left(\frac{2\pi }{\sqrt{-\overline k}}\|\langle \omega, X \rangle \|_{\cC^1} +e^{-\sqrt{-\overline k}t}\right)\|\varphi\|_{\cC^0}+C_* e^{-\sqrt{-\overline k} \alpha t}  \|\varphi\|_{\cC^\alpha}\\
&\le C_*\left(\frac{2\pi }{\sqrt{-\overline k}}\|\langle \omega, X \rangle \|_{\cC^1} + e^{-\sqrt{-\overline k} \alpha t}  \right) \|\varphi\|_{\cC^\alpha}.
\end{split}
\]
It remains the case $\varsigma=1+\alpha$. Using \eqref{eq:Calpha}, we can check that if $G\in \cC^2$ and $\varphi \in \cC^{1+\alpha}$, then 
\begin{equation}\label{eq:C1+}
\|\varphi G\|_{\cC^{1+\alpha}}\le  \|\varphi\|_{\cC^{1+\alpha}}(1+4\|G\|_{\cC^{2}}).
\end{equation}
 Let us thus compute the second derivative of $(G_{t,\omega}\circ \geo_t\circ \horo_\eta \circ \psi)$: using again that $D\geo_tU=J_t U$,
\begin{equation}\label{eq:2deriv}
\begin{split}
|(G_{t,\omega}\circ & \geo_t\circ \horo_\eta \circ \psi)''|\\
=&\left|\left( 2\pi\imath (G_{t,\omega}\circ \geo_t\circ \horo_\eta\circ \psi)\psi' \int_0^t J_{t-a}\circ \horo_\eta \cdot U(\langle \omega, X \rangle\circ \geo_{t-a}\circ \horo_\eta\circ \psi )\diff a \right)'\right|\\
\le & 2\pi C_* \bigg( \left| \int_0^t (J_{t-a}\circ \horo_\eta)' \cdot U(\langle \omega, X \rangle\circ \geo_{t-a}\circ \horo_\eta\circ \psi )\diff a \right|\\
&+ \left| \int_0^t J_{t-a}\circ \horo_\eta \cdot (U(\langle \omega, X \rangle\circ \geo_{t-a}\circ \horo_\eta\circ \psi ))'\diff a \right|\bigg)\\
&+2\pi C_*  \left| \int_0^t J_{t-a}\circ \horo_\eta \cdot U(\langle \omega, X \rangle\circ \geo_{t-a}\circ \horo_\eta\circ \psi )\diff a \right|(1+\|G_{t,\omega}\circ \geo_t\circ \horo_\eta\circ \psi\|_{\cC^1}).
\end{split}
\end{equation}
We need to estimate the above terms. By \eqref{eq:Jbound}, \eqref{eq:potential}, \eqref{eq:div} and \eqref{eq:intJ},
\[
\begin{split}
\left| (J_{t-a}\circ {\horo_\eta})' \right| &\le \left| \int_0^t (\Phi^{-} \circ \geo_a\circ \horo_\eta)' \right|\left| J_{t-a}\circ \horo_\eta \right|\le  \dfrac{\|\Phi^{-} \|_{\cC^1}}{\sqrt{-\overline k}}e^{-\sqrt{-\overline k}(t-a)}.
\end{split}
\]
On the other hand
\[
\left| (U(\langle \omega, X \rangle\circ \geo_{t-a}\circ \horo_\eta\circ \psi ))'\right| \le C_*\|\langle \omega, X \rangle\|_{\cC^2}\|J_{t-a}\|_{\cC^0}\le C_*\|\langle \omega, X \rangle\|_{\cC^2}e^{-\sqrt{-\overline k}(t-a)}
\]
and
\[
\int_0^t |U(\langle \omega, X \rangle\circ \geo_{t-a}\circ \horo_\eta\circ \psi )|\diff a \le C_*\|\langle \omega, X \rangle\|_{\cC^1}\int_0^t e^{-\sqrt{-\overline k}(t-a)}\diff a.
\]
Using the above three inequalities into \eqref{eq:2deriv}, and recalling \eqref{eq:C1G}, we obtain
\[
\begin{split}
\|G_{t,\omega}\circ  \geo_t\circ \horo_\eta \circ \psi\|_{\cC^2}&\le 2\pi C_*^2 \dfrac{\|\Phi^{-} \|_{\cC^1}\|\langle \omega, X \rangle\|_{\cC^1}}{2{\sqrt{-\overline k}}} + \frac{2\pi C_*^2\|\langle \omega, X \rangle\|_{\cC^2}}{2\sqrt{-\overline k}}\\
&+\frac{2\pi C_*^2\|\langle \omega, X \rangle\|_{\cC^1}}{2\sqrt{-\overline k}}\left(1+C_*\left(\frac{2\pi  }{\sqrt{-\overline k}}\|\langle \omega, X \rangle \|_{\cC^1} +e^{-\sqrt{-\overline k}t}\right)\right).
\end{split}
\]
We have thus obtained
\[
\|G_{t,\omega}\circ  \geo_t\circ \horo_\eta \circ \psi\|_{\cC^2}\le 2\pi^2\max\left\{\frac{C_*^2}{\sqrt{-\overline k}}, \frac{ C_*^3}{-\overline k}\right\}A(X,\omega)\left(4+e^{-\sqrt{-\overline k}t}\right).
\]
Hence, by \eqref{eq:C1+}, we have
\begin{multline*}
\|(\varphi G_{t,\omega})\circ \geo_t\circ \horo_\eta\circ \psi \|_{\cC^{1+\alpha}} \\ \le (1+8\pi^2\max\{C_*^2,C_*^3\}B(\overline k)A(X,\omega)(4+e^{-\sqrt{-\overline k}t}))\|\varphi\circ \geo_t\circ \horo_\eta\circ \psi \|_{\cC^{1+\alpha}}.
\end{multline*}
Finally, by \eqref{eq:great}, and arguing as in \eqref{eq:Holder},
\[
\|(\varphi\circ \geo_t\circ \horo_\eta\circ \psi)'\|_{\cC^{\alpha}}=\| \psi' J_t (U\varphi)\circ \geo_t\circ \horo_\eta \circ \psi\|_{\cC^\alpha}\le C_*e^{-\sqrt{\overline k}t}e^{-\sqrt{-\overline k} \alpha t}\|\varphi'\|_{\cC^\alpha}.
\]
Inserting the above in the previous equation we conclude:
\begin{multline*}
\|(\varphi G_{t,\omega})\circ \geo_t\circ \horo_\eta\circ \psi \|_{\cC^{1+\alpha}} \\ \le  \left(C_*+8\pi^2\max\{C_*^3,C_*^4\} B(\overline k)A(X,\omega)\big(1+4e^{-\sqrt{-\overline k}(1+\alpha)t}\big)\right)\|\varphi\|_{\cC^{1+\alpha}}.
\end{multline*}
The proof is therefore complete.
\end{proof}

%%%%%%%%%%%%%%%%%%%%

\subsection*{Acknowledgements}
This work was started when RC was working at University of Pisa and it is partially supported by the research project PRIN 2022NTKXCX ``Stochastic properties of dynamical systems'' funded by the Ministry of University and Scientific Research of Italy.\\ This research is part of RC's activity within the INdAM (Istituto Nazionale di Alta Matematica) group GNFM, and RC and DR's activity within the UMI Group ``DinAmicI''.\\ It is a pleasure to thank Claudio Bonanno, Dmitri Dolgopyat, Fran\c cois Ledrappier, Paolo Giulietti, Carlangelo Liverani, Omri Sarig, and Milo Viviani for many useful discussions. Finally, we thank the referees for the careful reading of the manuscript and for the helpful comments and suggestions, which have improved the clarity of the paper.

%%%%%%%%%%%%%%%%%%%%%Bibliography

\end{document}